\newcommand*\diff{\mathop{}\!\mathrm{d}}
\numberwithin{equation}{section}
\numberwithin{equation}{section}
\def\dashint{\,\ThisStyle{\ensurestackMath{%
  \stackinset{c}{.2\LMpt}{c}{.5\LMpt}{\SavedStyle-}{\SavedStyle\phantom{\int}}}%
  \setbox0=\hbox{$\SavedStyle\int\,$}\kern-\wd0}\int}
\def\ddashint{\,\ThisStyle{\ensurestackMath{%
  \stackinset{c}{.2\LMpt}{c}{.5\LMpt+.2\LMex}{\SavedStyle-}{%
    \stackinset{c}{.2\LMpt}{c}{.5\LMpt-.2\LMex}{\SavedStyle-}{%
      \SavedStyle\phantom{\int}}}}\setbox0=\hbox{$\SavedStyle\int\,$}\kern-\wd0}\int}
\newcommand*{\isomorphism}{%
  \mathrel{%
    \mathpalette\@isomorphism{}%
  }%
}
\newcommand*{\@isomorphism}[2]{%
  \sbox0{$#1\simeq$}%
  \sbox2{$#1\sim$}%
  \dimen@=\ht0 %
  \advance\dimen@ by -\ht2 %
  %
  \sbox0{%
    \lower1.9\dimen@\hbox{%
      $\m@th#1\relbar\isomorphism@joinrel\relbar$%
    }%
  }%
  \rlap{%
    \hbox to \wd0{%
      \hfill\raise\dimen@\hbox{$\m@th#1\sim$}\hfill
    }%
  }%
  \copy0 %
}
\newcommand*{\isomorphism@joinrel}{%
  \mathrel{%
    \mkern-3.4mu %
    \mkern-1mu %
    \nonscript\mkern1mu %
  }%
}
\newcommand{\Addresses}{{
  \bigskip
  \footnotesize

  \textsc{Department of Mathematics, Johns Hopkins University,
    3400 N. North Charles, Baltimore, MD 21218}\par\nopagebreak
  \textit{E-mail address}: \texttt{rtang18@jhu.edu}
}}
\newtheorem{theorem}{Theorem}
\newtheorem{lemma}{Lemma}
\newtheorem{definition}{Definition}
\newtheorem{proposition}{Proposition}
\newtheorem{remark}{Remark}
\title{Nonlocal Filtration Equations with Rough Kernels in the Heisenberg Group}
\author{Rong Tang}
\date{}
\begin{document}

\maketitle

\begin{abstract}
    Motivated by the extensive investigations of integro-differential equations on $\mathbb{R}^n$, we  consider nonlocal filtration type equations with rough kernels on the Heisenberg group $\mathbb{H}^n$. We prove the existence and uniqueness of weak solutions corresponding to suitable initial data. Furthermore, we obtain the large time behavior of solutions and the uniform Hölder regularity of sign-changing solutions for the porous medium type equations ($m\geq 1$). Notice that both conformal fractional operators $\mathscr{L}_{\alpha/2}$ and pure power fractional operators $\mathscr{L}^{\alpha/2}$ on the Heisenberg group $\mathbb{H}^n$ have their integral representations with suitable kernels. Therefore, all the results in this paper will hold for these equations with operators $\mathscr{L}_{\alpha/2}$ or $\mathscr{L}^{\alpha/2}$. 
\end{abstract}

\section{Introduction}
The classic filtration equations are the nonlinear parabolic equations
\begin{equation*}
    \begin{aligned}
        \partial_t u = \Delta_{\mathbb{R}^n}(u^m), ~~m>0.
    \end{aligned}
\end{equation*}
Here, $u^m = |u|^{m-1}u$. The above equations with $m > 1$ are usually called the porous medium equations. Otherwise, they are called the fast diffusion equations. There are extensive investigations of the above equations, especially in the applications of physics, see \autocite{tpm_vj}. 

In 2012, Pablo, Quirós, Rodríguez and Vázquez\autocites{afp_dpa}{agf_dpa} developed a theory of the following fractional version of filtration type equation, which can be considered as a model in statistical mechanics. 

\begin{equation*}
    \left\{
            \begin{array}{ll}
                \partial_t u+ (-\Delta)_{\mathbb{R}^n}^{\alpha/2}(|u|^{m-1}u) = 0, & x\in \mathbb{R}^n,  t>0  \\
                u(x,0) = u_0(x), & x \in \mathbb{R}^n. 
            \end{array}
    \right.
\end{equation*}
Here $0 < \alpha < 2$. The authors developed a theory of existence, uniqueness and qualitative properties of the weak solutions through the Caffarelli-Silvestre extension technique \autocite{aep_cl}. Moreover, this nonlocal equation forces the infinite propagation speed. This result is different comparing to the finite propagation speed with free boundary in the local filtration equations. This difference results from the nonlocal virtue of the fractional Laplacian operator $(-\Delta)_{\mathbb{R}^n}^{\alpha/2}$. 

Considering the Laplace-Beltrami operator $(-\Delta_M)$ on the general Riemannian manifolds $M$, the porous medium type equations have been studied for instance in \autocite{tpm_gg}. Furthermore, the pure power fractional Laplacian $(-\Delta_M)^{\alpha/2}$ can be defined through a semigroup approach. The corresponding fractional porous medium equations are investigated, see \autocite{tfp_be}.

However, the pure power fractional operator $(-\Delta_M)^{\alpha/2}$ is not conformally covariant. Let $(X, h^+)$ be a Poincar\'e-Einstein manifold with conformal infinity $(M, [g])$. The conformal fractional Laplacian $P_{\alpha/2}^g$ 
on $(M, g)$ is constructed as the Dirichlet-to-Neumann operator for a generalized eigenvalue problem on $(X, h^+)$ (see the details in \autocites{fli_cs}).

$P_{\alpha/2}^g$ is a self-adjoint pseudo-differential operator on $(M, g)$ with principal symbol the same as $(-\Delta_M)^{\alpha/2}$, and satisfies the conformal transformation relation,
\begin{equation*}
    \begin{aligned}
        P_{\alpha/2}^{\rho^{\frac{4}{n-\alpha}}g}(\phi)=\rho^{-\frac{n+\alpha}{n-\alpha}}P_{\alpha/2}^g(\rho \phi), ~~\forall~ \rho, \phi\in C^\infty(M) ~\text{with}~\rho>0.
    \end{aligned}
\end{equation*}
In the case that $M = \mathbb{R}^n$ with the Euclidean metric $|dx|^2$, Chang and Gonz\'alez \autocite{fli_cs} illustrated that the conformal fractional Laplacian $P_{\alpha/2}^{|dx|^2_{\mathbb{R}^n}}$ coincides with the pure power fractional Laplacian $(-\Delta_{\mathbb{R}^n})^{\alpha/2}$ for $0<\alpha<2$.

Let $(\mathbb{S}^n,g_{\mathbb{S}^n})$ with $n\geq 2$ be the unit sphere equipped with the standard round metric $g_{\mathbb{S}^n}$, the conformal fractional Laplacian $P_{\alpha/2}^{g_{\mathbb{S}^n}}$
is the pull back of the fractional Laplacian $(-\Delta_{\mathbb{R}^n})^{\alpha/2}$ via the stereographic projection through
\begin{equation*}
    \begin{aligned}
        (P_{\alpha/2}^{g_{\mathbb{S}^n}}(\phi))\circ\Psi=(\det\diff \Psi)^{-\frac{n+\alpha}{2n}}(-\Delta_{\mathbb{R}^n})^{\alpha/2}((\det \diff \Psi)^{\frac{n-\alpha}{2n}}\phi\circ\Psi),~~\text{for}~\phi\in C^2(\mathbb{S}^n).
    \end{aligned}
\end{equation*}
where $\Psi$ is the inverse of the stereographic projection and $\Psi^*(g_{\mathbb{S}^n})=(\det \diff \Psi)^{\frac{2}{n}}|\diff x|^2$. 

Define $R_{\alpha/2}^{g} = P_{\alpha/2}^{g}(1)$, which is called the fractional order curvature on $\mathbb{S}^n$. Considering the following fractional Nirenberg problem using a geometric flow. Given a positive smooth function $f$ on $\mathbb{S}^n$ and $g_0\in [g_{\mathbb{S}^n}]$, we define the flow $g=g(t)$:
\begin{equation*}
    \begin{aligned}
        \frac{\partial g}{\partial t}=(\gamma f-R_{\alpha/2}^g)g,~g|_{t=0}=g_0,
    \end{aligned}
\end{equation*}
where $\gamma(t)=\frac{\int _{\mathbb{S}^n}R_{\alpha/2}^g\diff V_g}{\int_{\mathbb{S}^n}f\diff V_g}$ and $0<\alpha<2$. The parabolic equation of this flow can be rewritten as the fractional filtration equation on $\mathbb{R}^n$ by rescalings through the stereographic projection. This approach has been well used in the investigations of fractional Nirenberg problem and fractional Yamabe problem on the unit sphere $\mathbb{S}^{n}$ in \autocites{afc_cx}{fys_hp}{afy_jt}. 

In a similar way, we will consider the fractional geometric flow of the Cauchy Riemannian (CR) structure instead of the Riemannian structure. The investigations of CR geometry and sub-Riemannian geometry originated from the failure of the Riemann mapping theorem in complex analysis with several variables. The flat space in the CR geometry or sub-Riemannian geometry is the Heisenberg group $\mathbb{H}^n$. 

Let $-\mathscr{L}$ be the sub-Laplacian on the Heisenberg group. The fractional power of the operator $\mathscr{L}$ can be defined through semigroup approach, denoted by $\mathscr{L}^{\alpha/2}$. An extension technique involving $\mathscr{L}^{\alpha/2}$ akin to that of Caffarelli-Silvestre has been developed in a general setting by Stinga and Torrea in \autocite{epa_sp}. This naturally leads to a theory of existence, uniqueness and quantitative properties of the fractional filtration equations on $\mathbb{H}^n$ via the Caffarelli-Silvestre type extension in a similar way. 

However, the pure power fractional operator $\mathscr{L}^{\alpha/2}$ is not conformally covariant with respect to conformal class of pseudo-Hermitian structures, which seems to be very counterintuitive since $(-\Delta_{\mathbb{R}^n})^{\alpha/2}$ are conformally covariant. There is another pseudodifferential operator $\mathscr{L}_{\alpha/2}$ satisfying a conformally covariant formula. This is where a new story begins.

The conformally covariant operator $\mathscr{L}_{\alpha/2}$ was introduced by Branson, Fontana and Morpurgo in \autocite{mta_bt} via the spectral formula,
\begin{equation*}
    \begin{aligned}
        \mathscr{L}_{\alpha/2}=2^{\frac{\alpha}{2}}|\partial _s|^{\frac{\alpha}{2}}\frac{\Gamma(-\frac{1}{2}\mathscr{L}|\partial _s|^{-1}+\frac{1+\frac{\alpha}{2}}{2})}{\Gamma(-\frac{1}{2}\mathscr{L}|\partial _s|^{-1}+\frac{1-\frac{\alpha}{2}}{2})},~~0<\alpha<2.
    \end{aligned}
\end{equation*}
In \autocite{aep_fr}, Frank, González, Monticelli and Tan formulated the following extension problem,
\begin{equation*}
    \left\{
        \begin{array}{ll}
            \partial_{tt}U+ (1-\alpha)t^{-1}\partial_t U+t^2\partial_{ss}U+\mathscr{L}U = 0, & \text{in}~\mathbb{H}^n\times \mathbb{R}^+,  \\
            U(x,0) = f(x),     & x=(z,s) \in \mathbb{H}^n.
        \end{array}
    \right.
\end{equation*}
The authors recovered the operator $\mathscr{L}_{\alpha/2} f$ as the Neumann data of the above extension function $U$. The additional term $t^2\partial_{ss}$, which is a fourth-order term with respect to the dilations $\delta_\lambda$ on $\mathbb{H}^n$, makes the extension problem harder than the Caffarelli-Silvestre type extension. 

Let $(X^{n+1},h^+)$ be a Kähler-Einstein manifold with strictly pseudoconvex boundary $(M,[\theta])$. The conformal fractional operator $P_{\alpha/2}^{\theta}$ on $(M, \theta)$ is constructed as the Dirichlet-to-Neumann operator for a generalized eigenvalue problem (see \autocite{aep_fr}). The fractional order curvature associated to $P_{\alpha/2}^{\theta}$ is defined as $R_{\alpha/2}^{\theta}=P^{\theta}_{\alpha/2}(1)$. Here, $0 < \alpha < 2$. 

Let $(\mathbb{S}^{2n+1},\hat{\theta}_0)$ with $n\geq 1$ be the unit sphere equipped with the standard pseudo-Hermitian structure $\hat{\theta}_0$. The correspondence between the CR sphere and the Heisenberg group arises from Cayley transform, see \autocite{dga_sd}. As pointed out in \autocite{mta_bt}, $P_{\alpha/2}^{\hat{\theta}_0}$ is the pull back of the conformally covariant operator $\mathscr{L}_{\alpha/2}$ via the Cayley transformation through
\begin{equation*}
  \begin{aligned}
        (P_{\alpha/2}^{\hat{\theta}_0}(\phi))\circ\Psi_C=(\det\diff \Psi_C)^{-\frac{2(n+1)+\alpha}{4(n+1)}}\mathscr{L}_{\alpha/2}((\det \diff \Psi_C)^{\frac{2(n+1)-\alpha}{4(n+1)}}\phi\circ\Psi_C),~~\text{for}~\phi\in C^2(\mathbb{S}^{2n+1}).
  \end{aligned}
\end{equation*}
where $\Psi_C$ is the Cayley transformation and $\Psi_C^*(\hat{\theta}_0)=(|J_C|)^{\frac{1}{n+1}}\theta_0$. 

Considering the CR fractional curvature flow on $\mathbb{S}^{2n+1}$, it is defined as the evolution of the contact form $\hat{\theta}(t)$:
\begin{equation*}
    \begin{aligned}
        \frac{\partial \hat{\theta}}{\partial t }=-(Q_{\alpha/2} ^{\hat{\theta}}-\gamma f)\hat{\theta}, ~\hat{\theta}(0)=v_0^{\frac{4}{2(n+1)-\alpha}}\hat{\theta}_0.
    \end{aligned}
\end{equation*}
where $\gamma(t)=\frac{\int _{\mathbb{S}^{2n+1}}{Q_{\alpha/2} ^{\hat{\theta}}\diff V_{\hat{\theta}}}}{\int _{\mathbb{S}^{2n+1}}f\diff V_{\hat{\theta}}}$ and $0 < \alpha < 2$.
 
The Cayley transform is a natural way to transform the evolution equation of this geometric flow on the CR sphere $\mathbb{S}^{2n+1}$ to the nonlocal equation of filtration type involving $\mathscr{L}_{\alpha/2}$ on the Heisenberg group $\mathbb{H}^n$. However, the extension map corresponding to $\mathscr{L}_{\alpha/2}$ is not good enough as the 
Caffarelli-Silvestre type extension, we can not obtain analogous result of the nonlocal filtration type equation on $\mathbb{H}^n$ through the extension technique as \autocite{agf_dpa}.

On the other hand, Roncal and Thangavelu \autocite{hif_rl} established the essential pointwise integral representation of the operator $\mathscr{L}_{\alpha/2}$ using tools from noncommutative harmonic analysis. To be precise, for Schwartz function $f$,
\begin{equation*}
    \begin{aligned}
        \mathscr{L}_{\alpha/2}f(x)=C_{n,\alpha}\text{P.V.}\int _{\mathbb{H}^n}\frac{f(x)-f(y)}{|y^{-1}\cdot x|_{\mathbb{H}^n}^{Q+\alpha}}\diff \mu({y}), ~~0<\alpha<2. 
    \end{aligned}
\end{equation*}

Furthermore, as pointed out in \autocite{hif_ff}, the operator $\mathscr{L}^{\alpha/2}$ also has a pointwise integro-representation. To be precise, for the Schwartz function $f$, 
\begin{equation*}
    \begin{aligned}
        \mathscr{L}^{\alpha/2} f(x)=\text{P.V.}\int _{\mathbb{H}^n}{(f(x)-f(y))}\Tilde{R}_{-\alpha}(y^{-1}\cdot x)\diff \mu(y),~~0<\alpha<2.
    \end{aligned}
\end{equation*} 
where $\Tilde{R}_{-\alpha}(x)$ is a positive smooth function on  $\mathbb{H}^n\backslash \{0\}$ and comparable with $|x|_{\mathbb{H}^n}^{-\alpha-Q}$.

Therefore, the integro-representations of $\mathscr{L}^{\alpha/2}$ and $\mathscr{L}_{\alpha/2}$ motivate the investigations of filtration type equations involving the integro-kernels with rough estimates on $\mathbb{H}^n$. 

Let us recall the past investigations of the integro-differential equations on $\mathbb{R}^n$. Integro-differential equations naturally arise from models in physics, engineering, and finance that involve long range interactions, see for instance \autocite{fmw_tp}. They are also a natural generalization of fractional differential equations, since the fractional Laplacian $(-\Delta_{\mathbb{R}^n})^{\alpha/2}$ is a classic example of nonlocal operators with a specific integro-kernel,
\begin{equation*}
    \begin{aligned}
        (-\Delta_{\mathbb{R}^n})^{\alpha/2}f(x)=C_{n,\alpha}\text{P.V.}\int _{\mathbb{R}^n}\frac{f(x)-f(z)}{|x-z|^{n+\alpha}}\diff z
    \end{aligned}
\end{equation*}
where $C_{n,\alpha}=\frac{2^{\alpha-1}\alpha\Gamma((n+\alpha)/2)}{\pi^{n/2}\Gamma(1-\alpha/2)}$.

The existence, uniqueness and quantitative properties of the solutions to the following nonlocal filtration type equations have been investigated in \autocites{ttp_ai}{rtfp_cl}{nfe_dpa}{rtf_dpa},
\begin{equation*}
    \left\{
            \begin{array}{ll}
                \partial_t u+ \mathcal{L}u^m = 0, & x\in \mathbb{R}^n,  t>0  \\
                u(x,0) = u_0(x), & x \in \mathbb{R}^n. 
            \end{array}
    \right.
    \end{equation*}
with suitable initial data $u_0$. There is no sign restriction of $u$. The nonlocal operator $\mathcal{L}$ is defined formally as
\begin{equation*}
    \begin{aligned}
        \mathcal{L}f(x)=\text{P.V.}\int _{\mathbb{R}^n}(f(x)-f(y))J(x,y)\diff y
    \end{aligned}
\end{equation*}
with a measurable kernel $J$ satisfying  
\begin{equation*}
    \left\{
            \begin{array}{ll}
                J(x,y)=J(y,x)\geq0~~&x,y\in \mathbb{R}^n , x\neq y,\\
                \frac{1}{\Lambda|x-y|^{n+\alpha}}\leq J(x,y)\leq \frac{\Lambda}{|x-y|^{Q+\alpha}},~~&0<\alpha<2, ~~\Lambda>0.
            \end{array}
    \right.
\end{equation*} 
The integro-operator satisfying the required conditions has been greatly studied especially in the probability theory, for instance the Markov jump process and the martingale problem. For the corresponding filtration equations with these integro-operators, the critical value $m^*:=\frac{n-\alpha}{n}$ gives a natural way to develop non-identical theories for $m>m^*$ and $m<m^*$, see more details in \autocite{agf_dpa}.

In this paper, we establish the existence, uniqueness and quantitative properties of solutions to the nonlocal filtration type equations with integro-operators on the Heisenberg group $\mathbb{H}^n$.
\begin{equation}
    \left\{
            \begin{array}{ll}
                \partial_t u+ \mathcal{L}(|u|^{m-1}u) = 0, & x\in \mathbb{H}^n,  t>0  \\
                u(x,0) = u_0(x), & x \in \mathbb{H}^n. 
            \end{array}
    \right.
\end{equation}
with suitable initial data $u_0$. There is no sign restriction of $u$. Similarly, the nonlocal operator $\mathcal{L}$ is defined formally as
\begin{equation}
    \begin{aligned}
        \mathcal{L}f(x)=\text{P.V.}\int _{\mathbb{H}^n}(f(x)-f(y))J(x,y)\diff \mu(y) 
    \end{aligned}
\end{equation}
with a measurable kernel $J$ satisfying  
\begin{equation}
    \left\{
            \begin{array}{ll}
                J(x,x\cdot y)=J(x,x\cdot y^{-1})\geq0~~&x,y\in \mathbb{H}^n , x\neq y,\\
                \frac{1}{\Lambda|x^{-1}\cdot y|^{Q+\alpha}}\leq J(x,y)\leq \frac{\Lambda}{|x^{-1}\cdot y|^{Q+\alpha}},~~&0<\alpha<2, ~~\Lambda>0.
            \end{array}
    \right.
\end{equation} 
Here $|\cdot|$ is the standard homogeneous quasi-norm on $\mathbb{H}^n$ and $Q=2n+2$ is the corresponding homogeneous degree. $\diff \mu(y)$ is the volume form with respect to standard Haar measure on $\mathbb{H}^n$. Under the symmetric assumption $J(x,x\cdot y)=J(x,x\cdot y^{-1})$, the operator has a pointwise expression,
\begin{equation}
    \begin{aligned}
        \mathcal{L}f(x)=\frac{1}{2}\int_{\mathbb{H}^n}({2f(x)-f(x\cdot y)-f(x\cdot y^{-1})})J(x^{-1}\cdot y)\diff \mu(y).
    \end{aligned}
\end{equation}
for regular enough $f$. Without misunderstandings, $J(x, y)$ is denoted by $J(x^{-1}\cdot y)$. Comparably, the critical value $m^*:=\frac{Q-\alpha}{Q}$ gives a natural way to develop non-identical theories for $m>m^*$ and $m<m^*$. 

In a forthcoming paper we shall apply the results in this paper to prove the long time existence of the fractional order curvature flow on the CR sphere through Cayley transform.

\subsection{Main Results}

\begin{theorem}
    Let $m>m^*$, $0<\alpha<2$. $J$ satisfies  assumption (1.3). Then for every $u_0\in L^1(\mathbb{H}^n)$, there exists a unique weak solution of Cauchy problem (1.1). Moreover,
    \begin{enumerate}[label=(\roman*)]
        \item $\partial _t u\in L^{\infty}((\tau,\infty):L^1({\mathbb{H}^n}))$ for every $\tau>0$.
        \item Conservation of mass: $\int_{\mathbb{H}^n}u(x,t)\diff \mu=\int_{\mathbb{H}^n}u_0(x,t)\diff \mu$. 
        \item $L^p$ norm of $u$ is non-increasing in time, for each $1\leq p\leq \infty$.
        \item $L^1$ contraction property holds:
        $$\int _{\mathbb{H}^n}(u(\cdot,t)-v(\cdot,t))_+\leq \int _{\mathbb{H}^n}(u(\cdot,0)-v(\cdot,0))_+,~~\text{for} ~t\geq0.$$
        \item $L^p-L^\infty$ smoothing effects hold:
        $$\sup\limits_{x\in \mathbb{H}^n}|u(x,t)|\leq C(m,p,\alpha,Q)t^{-\gamma_p}\norm{f}_p^{\delta_p}.$$
        with $\gamma_p=(m-1+\alpha p/Q)^{-1}$ and $\delta_p=\alpha p \gamma_p/Q$ for each $1\leq p\leq \infty$.
    \end{enumerate}
\end{theorem}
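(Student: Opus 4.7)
The plan is to follow the nonlinear semigroup framework of Crandall--Liggett, adapted to the Heisenberg group setting with the rough kernel $J$, paralleling the strategy of de Pablo--Quir\'os--Rodr\'iguez--V\'azquez in \autocite{agf_dpa} for the analogous Euclidean problem.

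\textbf{Resolvent and semigroup construction.} First I would solve the elliptic resolvent problem $u + h\,\mathcal{L}(|u|^{m-1}u) = f$ for $h > 0$ and $f \in L^1 \cap L^\infty(\mathbb{H}^n)$, via a Minty--Browder monotonicity argument in the fractional Sobolev space associated with the symmetric Dirichlet form
\begin{equation*}
\mathcal{E}_J(v,w) = \tfrac{1}{2} \iint_{\mathbb{H}^n \times \mathbb{H}^n} (v(x)-v(y))(w(x)-w(y)) J(x,y)\,\diff\mu(x)\diff\mu(y).
\end{equation*}
Testing the resulting equation against $\mathrm{sign}_+(u-\tilde u)$ (where $\tilde u$ corresponds to $\tilde f$) and using symmetry of $J$ yields the $L^1$-contraction at the resolvent level; testing against $|u|^{p-2}u$ with a Stroock--Varopoulos-type inequality gives decay of $L^p$ norms; formal testing against $1$ gives conservation of mass. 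Iterating $u_k = T_h(u_{k-1})$ with $u_0 \in L^1$ and invoking the Crandall--Liggett theorem produces a unique mild solution $u \in C([0,\infty); L^1(\mathbb{H}^n))$, which can then be identified with a weak solution of (1.1), and properties (ii)--(iv) pass to the limit from the resolvent level.

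\textbf{Time derivative.} For (i) I would exploit the homogeneity $\mathcal{L}(|\lambda u|^{m-1}(\lambda u)) = \lambda^m\,\mathcal{L}(|u|^{m-1}u)$, which makes $u_\lambda(x,t) := \lambda u(x, \lambda^{m-1}t)$ another solution. Applying the $L^1$-contraction to $u_\lambda - u$ and expanding in $\lambda-1$ yields the B\'enilan--Crandall one-sided estimate $\|u - (m-1)t\partial_t u\|_1 \le \|u_0\|_1$; combined with mass conservation this gives $\|\partial_t u(\cdot,t)\|_1 \lesssim \|u_0\|_1/[(m-1)t]$. The case $m=1$ reduces to linear semigroup theory on $\mathbb{H}^n$.

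\textbf{Smoothing effect (main obstacle).} The most delicate point is (v). I would combine (a) the fractional Sobolev embedding
\begin{equation*}
\|g\|_{L^{2Q/(Q-\alpha)}(\mathbb{H}^n)}^2 \le C\,\mathcal{E}_J(g,g),
\end{equation*}
which follows from the Folland--Stein inequality for $\mathscr{L}^{\alpha/2}$ together with the pointwise comparability between $J$ and $|x^{-1}\cdot y|^{-Q-\alpha}$ enforced by (1.3); with (b) a Stroock--Varopoulos-type inequality
\begin{equation*}
\int_{\mathbb{H}^n} |u|^{p-2}u\,\mathcal{L}(|u|^{m-1}u)\,\diff\mu \ge c(m,p)\,\mathcal{E}_J\big(|u|^{(m+p-1)/2}\mathrm{sign}(u),\, |u|^{(m+p-1)/2}\mathrm{sign}(u)\big).
\end{equation*}
Inserting these into the energy identity $\tfrac{d}{dt}\tfrac{1}{p}\|u\|_p^p = -\int |u|^{p-2}u\,\mathcal{L}(|u|^{m-1}u)\,\diff\mu$ produces a closed nonlinear differential inequality in $\|u\|_p$, and a Moser--De Giorgi iteration sending $p \to \infty$ yields the smoothing bound. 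Scaling pins down the exponents $\gamma_p = (m-1+\alpha p/Q)^{-1}$ and $\delta_p = \alpha p\gamma_p/Q$; the assumption $m > m^* = (Q-\alpha)/Q$ is exactly what allows the iteration to close starting from an $L^p$ control. The chief technical difficulty is establishing the Sobolev embedding (a) from only the rough bounds (1.3); my plan is to use the two-sided pointwise comparability to reduce to the translation-invariant kernel $|x^{-1}\cdot y|^{-Q-\alpha}$, for which the Folland--Stein inequality on $\mathbb{H}^n$ is classical.
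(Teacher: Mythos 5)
Your overall skeleton matches the paper's: Crandall--Liggett via implicit time discretization of the resolvent problem, $L^1$-contraction and $L^p$ decay from the elliptic step, the B\'enilan--Crandall scaling estimate for $\partial_t u$, and a Moser iteration for the smoothing bound. However, there are three genuine gaps where your plan waves past steps that the paper treats carefully, and two of them involve the hypothesis $m > m^*$ in an essential way.

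First, conservation of mass is not obtained by ``formal testing against $1$''. The constant $1$ is not an admissible test function, and the rigorous proof takes a Lipschitz cutoff $\psi_R(x) = \psi(|x|/R)$ and estimates $\left|\int u^m \mathcal{L}\psi_R\right|$ via $\|\mathcal{L}\psi_R\|_q \le C R^{-\alpha + Q/q}$; H\"older's inequality with $q = p/(p-1)$, $p = \max\{1, 1/m\}$, then gives an error $O(R^{-\alpha + Q(p-1)/p})$. That exponent is negative if and only if $m > m^*$ (for $m<1$; it is automatically negative for $m\ge 1$). So the threshold $m^*$ enters precisely here, and without this argument you cannot distinguish conservation from extinction ($m < m^*$, Theorem 2). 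Your proposal does not account for this.

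Second, uniqueness of \emph{weak} solutions for $u_0 \in L^1$ does not follow from Crandall--Liggett alone, which only gives uniqueness of the mild solution produced by the discretization. The paper proves that weak solutions are unique (Proposition 4) by a separate Oleinik-type argument: take $\psi(x,t) = \int_t^T (u^m - \tilde u^m)(x,s)\,\diff s$ as test function, which requires first showing $u \in L^{m+1}(\mathbb{H}^n \times (0,T))$, and that integrability estimate again uses $m > m^*$ through an interpolation between $L^1$ and the Sobolev embedding. Your proposal silently identifies mild and weak solutions without this step.

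Third, the $L^p$--$L^\infty$ smoothing for all $1 \le p \le \infty$ is not obtained by a single Moser iteration: as $p \to 1$ the iteration constant $d_k$ involves a factor $\tfrac{1}{p_k - 1}$ that blows up. The paper (Proposition 10) closes the $p = 1$ case via a separate iterative interpolation argument using dyadic times $\tau_k = 2^{-k}t$ and the already-known $L^2$--$L^\infty$ smoothing, exploiting that $\gamma_2 \alpha / Q < 1$ when $m > m^*$ so the geometric series converges. Your plan of ``sending $p \to \infty$'' is the right shape for the $L^p$ estimate with $p$ bounded away from $1$, but the $L^1$ endpoint is a genuine extra step.

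The Minty--Browder versus convex-minimization point is cosmetic (both are standard for this monotone resolvent problem and the paper in fact uses a minimization of a coercive convex functional), and the Stroock--Varopoulos / Folland--Stein part is in line with the paper's Lemmas 1--4.
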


\begin{remark}
    {Weak solutions will be defined in Section 2. Furthermore, if solution $u$ satisfies condition (i), it can be considered as a strong solution in some sense.}
\end{remark}

If $m \leq m^*$, we establish the existence of weak solutions corresponding to more restrictive initial data compared with $m>m^*$.

\begin{theorem}
    Let $0<m\leq m^*$, $0<\alpha<2$. $J$ satisfies  assumption (1.3). Then for every $u_0\in L^1(\mathbb{H}^n)\cap L^{p}(\mathbb{H}^n)$ with $p>p^*(m)=(1-m)Q/\alpha$, there exists a unique weak solution of the Cauchy problem (1.1). Furthermore,

    \begin{enumerate}[label=(\roman*)]
        \item $\partial _t u\in L^{\infty}((\tau,\infty):L^1({\mathbb{H}^n}))$ for every $\tau>0$.
        \item Conservation of mass: $\int_{\mathbb{H}^n}u(x,t)\diff \mu=\int_{\mathbb{H}^n}u_0(x,t)\diff \mu$ holds for $m=m^*$. Otherwise, there exists a finite time $T>0$ such that $u(x,T)\equiv 0$ for $0<m<m^*$. 
        \item $L^1$ contraction property holds; $L^p$ norm of $u$ is non-increasing in time, $1\leq p\leq \infty$.
        \item $L^p-L^\infty$ smoothing effects hold:
        $$\sup\limits_{x\in \mathbb{H}^n}|u(x,t)|\leq C(m,p,\alpha,Q)t^{-\gamma_p}\norm{f}_p^{\delta_p}.$$
        with $\gamma_p=(m-1+\alpha p/Q)^{-1}$ and $\delta_p=\alpha p \gamma_p/Q$. Here $p>p^*(m)$.
    \end{enumerate}
\end{theorem}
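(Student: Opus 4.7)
My plan is to adapt the strategy of Theorem 1 to the singular regime $m\leq m^*$ via an approximation that sidesteps the degeneracy at $u=0$. Given $u_0\in L^1(\mathbb{H}^n)\cap L^p(\mathbb{H}^n)$ with $p>p^*(m)$, I would first approximate it by bounded, compactly supported truncations $u_{0,k}\to u_0$ in $L^1\cap L^p$. For each $u_{0,k}$ and a regularization parameter $\epsilon>0$, I would construct a unique weak solution $u_{\epsilon,k}$ of the non-degenerate problem $\partial_t u_{\epsilon,k}+\mathcal{L}(|u_{\epsilon,k}|^{m-1}u_{\epsilon,k}+\epsilon u_{\epsilon,k})=0$; the added linear term $\epsilon u$ makes the machinery behind Theorem 1 directly applicable and yields the $L^1$ contraction, the monotonicity of $L^q$ norms, and the time-regularity statement (i) at this approximate level.

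The core of the argument is an $L^p$--$L^\infty$ smoothing estimate with constants uniform in $\epsilon,k$. Testing the regularized equation against $|u_{\epsilon,k}|^{p-2}u_{\epsilon,k}$, using the symmetric pointwise form (1.4), and invoking the elementary inequality
\begin{equation*}
(a-b)\bigl(|a|^{p-2}a-|b|^{p-2}b\bigr)\geq c_{m,p}\bigl(|a|^{\frac{m+p-1}{2}}\mathrm{sgn}(a)-|b|^{\frac{m+p-1}{2}}\mathrm{sgn}(b)\bigr)^2
\end{equation*}
together with the lower bound in (1.3) gives, with $w:=|u_{\epsilon,k}|^{(m+p-1)/2}\mathrm{sgn}(u_{\epsilon,k})$,
\begin{equation*}
\frac{d}{dt}\int_{\mathbb{H}^n}|u_{\epsilon,k}|^p\,d\mu\leq -c\,[w]_{\dot H^{\alpha/2}(\mathbb{H}^n)}^2.
\end{equation*}
Combining this with the Folland--Stein fractional Sobolev embedding $\|w\|_{L^{2Q/(Q-\alpha)}}^2\lesssim[w]_{\dot H^{\alpha/2}(\mathbb{H}^n)}^2$ and a Nash--type interpolation that re-expresses everything in terms of $\|u_{\epsilon,k}\|_p$ closes the estimate as a Bernoulli-type differential inequality
\begin{equation*}
\tfrac{d}{dt}\|u_{\epsilon,k}\|_p^p\leq -C\,\|u_{\epsilon,k}\|_p^{p(1+\sigma)},\qquad \sigma=\sigma(m,p,\alpha,Q),
\end{equation*}
in which $\sigma$ is strictly positive precisely when $p>p^*(m)$ and degenerates at the threshold. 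Integrating, and running a Moser-type iteration across a dyadic sequence of exponents, yields the quantitative smoothing bound $\|u_{\epsilon,k}(\cdot,t)\|_\infty\leq Ct^{-\gamma_p}\|u_{0,k}\|_p^{\delta_p}$ with the exponents asserted in (iv).

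The uniform smoothing estimate, together with the time regularity from (i) and the $L^1$ contraction, lets me pass to the limit ($\epsilon\to0$, then $k\to\infty$) via Aubin--Lions--type compactness, producing a weak solution $u$ of (1.1); uniqueness of the limit again follows from $L^1$ contraction. For $m=m^*$, conservation of mass follows from testing against smooth spatial cutoffs exactly as in Theorem 1. For $0<m<m^*$, interpolating the $L^\infty$ smoothing bound with the uniform $L^1$ control lets one upgrade the Bernoulli inequality above to one with exponent strictly less than one on the right-hand side; the resulting ODE reaches zero in finite time $T>0$, so $\|u(\cdot,T)\|_p=0$, and the smoothing bound then propagates extinction to every $L^q$ with $q>p^*(m)$, giving $u(\cdot,T)\equiv 0$.

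The main technical obstacle is the sharp form of the smoothing estimate. Because $J$ is only measurable with a two-sided pointwise bound (1.3), no spectral calculus or explicit heat semigroup representation of $\mathcal{L}^{\alpha/2}$ is available, so the Folland--Stein embedding must be coupled directly to the bilinear Dirichlet form of $J$ through the comparison $J(x,y)\asymp|x^{-1}\cdot y|^{-Q-\alpha}$, which is where the \emph{two-sided} bound in (1.3) becomes essential. Keeping track of the Nash interpolation weights to show that $\sigma$ vanishes exactly at $p=p^*(m)$, and that all constants remain uniform as $\epsilon\to 0$, requires careful bookkeeping; the non-commutative structure of $\mathbb{H}^n$ does not change the scaling exponents, so the critical threshold mirrors the Euclidean one with $n$ replaced by the homogeneous dimension $Q=2n+2$.
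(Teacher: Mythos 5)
Your overall architecture --- approximate the initial data, establish a Moser-type $L^p$--$L^\infty$ smoothing estimate uniform in the approximation parameters, pass to the limit, then read off the qualitative properties --- parallels the paper's, but you insert an $\epsilon u$ regularization of the nonlinearity and compactness machinery that the paper does not use. The paper instead builds solutions by implicit time discretization (Crandall--Liggett) on the original degenerate nonlinearity, and passes to general data $u_0\in L^1\cap L^p$ using the $L^1$-contraction together with the smoothing bound and Banach--Alaoglu (Theorem 6). Your regularized route is not wrong, but it is a genuinely different construction and would require verifying that the Crandall--Liggett / $L^1$-contraction structure survives the $\epsilon\to 0$ limit.

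There are, however, two genuine gaps. First, your claim that for $m=m^*$ ``conservation of mass follows from testing against smooth spatial cutoffs exactly as in Theorem 1'' is precisely what fails at the critical exponent. The cutoff argument produces a factor $R^{-\alpha+Q(p-1)/p}$ with $p=\max\{1,1/m\}$, whose exponent is strictly negative when $m>m^*$ but vanishes identically when $m=m^*$, so the remainder does not go to zero as $R\to\infty$. The paper (Proposition 12) has to do extra work here: it splits $u=u_1+u_2$ into the parts supported on $\{|x|\leq R_0\}$ and $\{|x|\geq R_0\}$, uses $\|u_1^m\|_r\lesssim R_0^{Q(1-mr)/r}$ to make one piece small by sending $R\to\infty$ with $R_0$ fixed, and then makes the other piece small by sending $R_0\to\infty$. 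Without some such splitting your argument does not close at $m=m^*$.

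Second, uniqueness ``from $L^1$ contraction'' is too casual in the singular range. The paper proves uniqueness for $m>m^*$ from the weak formulation directly via an Ole\u{\i}nik-type test function (Proposition 4), but for $m\leq m^*$ this step fails, and uniqueness is only obtained in the restricted class of strong solutions satisfying $\partial_t u\in L^\infty((\tau,\infty):L^1(\mathbb{H}^n))$, through the comparison Proposition 5. Your approximation scheme does give you \emph{a} solution with that regularity, but the uniqueness you invoke is among approximation limits, not among all weak solutions; you need the comparison-among-strong-solutions step to match the theorem. Also, as a smaller point, the pointwise inequality you quote, $(a-b)(|a|^{p-2}a-|b|^{p-2}b)\geq c(|a|^{(m+p-1)/2}\mathrm{sgn}\,a-|b|^{(m+p-1)/2}\mathrm{sgn}\,b)^2$, has mismatched exponents: testing $\mathcal{E}_J(u^m,\cdot)$ against $|u|^{p-2}u$ requires the Stroock--Varopoulos form $(|a|^{m-1}a-|b|^{m-1}b)(|a|^{p-2}a-|b|^{p-2}b)\gtrsim(|a|^{(m+p-1)/2}\mathrm{sgn}\,a-|b|^{(m+p-1)/2}\mathrm{sgn}\,b)^2$, and your extra linear term $\epsilon\mathcal{E}_J(u,|u|^{p-2}u)$ produces yet another exponent; these must be disentangled before the Moser iteration.
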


\begin{remark}
    {Conservation of mass for $m> \frac{n-\alpha}{n}$ on $\mathbb{R}^n$ has been formulated in \autocite{nfe_dpa}. This property also holds for $m= \frac{n-\alpha}{n}$ on $\mathbb{R}^n$ by a same proof as the case of Heisenberg group $\mathbb{H}^n$.}
\end{remark}

Furthermore, we obtain some important regularity results of solutions to the equations (1.1). The first result is continuity at the points away from $t=0$. 

\begin{theorem}
    Let $m > 0$, $0 < \alpha < 2$. J satisfies assumption (1.3). Let $u$ be the weak solution of the Cauchy problem (1.1). Then $u \in C((0,+\infty)\times\mathbb{H}^n)$. 
\end{theorem}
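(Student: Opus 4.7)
The plan is to establish continuity in two pieces: uniform spatial Hölder continuity on each slab $(\tau,\infty)\times\mathbb{H}^n$ with $\tau>0$ fixed, and then time continuity extracted from the bound $\partial_t u \in L^\infty((\tau,\infty);L^1(\mathbb{H}^n))$ guaranteed by part (i) of Theorems 1 and 2. By the $L^p$–$L^\infty$ smoothing effect in part (v) of those theorems (applied with datum $u(\cdot,\tau/2)\in L^1$), the solution $u$ is uniformly bounded on $[\tau,\infty)\times\mathbb{H}^n$, and hence so is $v:=|u|^{m-1}u$.

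The first main step is to derive spatial Hölder regularity for $v$. On each time slice, $v$ satisfies $\mathcal{L}v = -\partial_t u$ in the sense of distributions, with right-hand side uniformly in $L^1(\mathbb{H}^n)$ and $v$ uniformly bounded. I would adapt the De Giorgi–Nash–Moser scheme for nonlocal equations with rough symmetric kernels to the Heisenberg setting. The key ingredients are: (a) a nonlocal Caccioppoli / energy inequality on Korányi gauge balls, obtained by testing with a cut-off version of $(v-k)_\pm$ and using the lower bound in (1.3) to control the intrinsic Folland–Stein fractional seminorm while the upper bound controls the nonlocal tails; (b) a fractional Sobolev embedding for the Folland–Stein space of order $\alpha/2$ on $\mathbb{H}^n$; and (c) an $L^2$-to-$L^\infty$ iteration upgrading the Caccioppoli estimate to oscillation decay on dyadic gauge balls. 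Left-translation invariance of Haar measure together with homogeneity of the quasi-norm under the anisotropic dilations make the iteration scale-invariant with critical exponent $\alpha$.

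Having established local spatial Hölder continuity of $v$ uniform on $[\tau,\infty)$, the identity $u=|v|^{1/m-1}v$ transfers this to $u$, since the map $s\mapsto|s|^{1/m-1}s$ is Hölder of exponent $\min(1,1/m)$ on any bounded interval. Time continuity at a point $(x_0,t_0)$ then follows by mollifying against a bump $\phi_\varepsilon$ of mass one supported in a Korányi ball of radius $\varepsilon$ around $x_0$: by the $L^1$ bound on $\partial_t u$, the averaged difference $\int\phi_\varepsilon(y)[u(y,t_0+h)-u(y,t_0)]\diff\mu(y)$ is at most $|h|\cdot\|\partial_t u\|_{L^\infty_t L^1_x}\cdot\|\phi_\varepsilon\|_\infty\lesssim|h|\varepsilon^{-Q}$, while replacing the mollified value by the pointwise value $u(x_0,\cdot)$ costs at most $\varepsilon^\beta$ by the spatial Hölder bound just established. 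Balancing the two in $\varepsilon$ produces a Hölder-type modulus of continuity in $h$ at every point $(x_0,t_0)$. Since $\tau>0$ was arbitrary, one concludes $u\in C((0,\infty)\times\mathbb{H}^n)$.

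The main obstacle is the spatial-regularity step: the De Giorgi iteration for $\mathcal{L}v=g$ with merely measurable kernel on $\mathbb{H}^n$ and $g$ only in $L^\infty_t L^1_x$. The sub-Riemannian structure forces working with Korányi gauge balls and the anisotropic dilations $\delta_\lambda$, and the Caccioppoli estimate must be set up using cut-off functions whose Folland–Stein fractional seminorms are controlled via the quasi-norm. Each of these ingredients has a Euclidean analogue, but assembling them correctly on $\mathbb{H}^n$ under the rough kernel assumption (1.3) is where the real work lies; once the spatial regularity is in hand, the transfer from $v$ to $u$ and the time continuity step are essentially soft.
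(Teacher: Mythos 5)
Your proposal takes a fundamentally different route from the paper: you try to decouple space and time, getting spatial H\"older regularity slice-by-slice from an elliptic nonlocal De Giorgi argument applied to $\mathcal{L}v=-\partial_t u$, and then recovering time continuity from $\partial_t u\in L^\infty_t L^1_x$ by mollification. The paper instead runs a genuinely parabolic De Giorgi iteration directly on the nonlinear equation $\partial_t\beta(u)+\mathcal{L}u=0$, building up through an energy inequality (Lemma 8), a first De Giorgi lemma (Lemma 9), an isoperimetric/intermediate-value lemma (Lemma 10), and an oscillation lemma (Lemma 11) that is then iterated on shrinking space-time cylinders with rescaled nonlinearities $\beta_k$.

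There is a genuine gap in your spatial step. You propose to feed $g:=-\partial_t u$, which is only known to lie in $L^\infty_t L^1_x$, into an elliptic De Giorgi iteration for $\mathcal{L}v=g$. This cannot produce H\"older continuity, or even continuity, of $v$. The critical integrability for the elliptic nonlocal De Giorgi scheme on a space of homogeneous dimension $Q$ is $g\in L^p$ with $p>Q/\alpha$; with $g$ merely in $L^1$, the source term $\int g(v-k)_+\eta^2$ in the Caccioppoli estimate cannot be absorbed by the fractional Sobolev norm of $(v-k)_+\eta$, and the iteration does not close. Concretely, in the Euclidean model already: take $v=\mathbbm{1}_{\{x_1>0\}}$, so $v$ is bounded and discontinuous, yet $(-\Delta)^{\alpha/2}v(x)\sim|x_1|^{-\alpha}$, which is in $L^1_{\rm loc}(\mathbb{R}^n)$ whenever $\alpha<1$. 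So ``$v$ bounded and $\mathcal{L}v\in L^1_{\rm loc}$'' is compatible with a jump discontinuity. The same phenomenon persists on $\mathbb{H}^n$ with the Kor\'anyi gauge and $Q=2n+2$. Your time-mollification step is fine as stated, but it rests entirely on the spatial H\"older bound that the elliptic step fails to deliver.

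The moral is that the parabolic structure cannot be discarded: the paper's energy inequality pairs $\partial_t\beta(u)$ against truncations of $u$ so that the time term contributes the functional $\mathcal{B}_\psi(u)$ on the good side of the estimate (rather than appearing as an uncontrolled $L^1$ source), and the second De Giorgi lemma is a space-time measure estimate that has no elliptic analogue. If you want to keep a ``freeze the RHS'' philosophy, you would first need to upgrade $\partial_t u$ to $L^\infty_t L^p_x$ with $p>Q/\alpha$, which is not available here; otherwise the scheme must be run jointly in space and time as in Lemmas 8--11.
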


We also prove the Hölder regularity holds for the positive solution with uniform positive lower bound. There will be no degenerary ($m > 1$) or singularity ($m < 1$) since $u \geq \delta >0$. 

\begin{theorem}
    Let $m > 0$, $0 < \alpha < 2$. J satisfies assumption (1.3). Let $u$ be the weak solution of the Cauchy problem (1.1). If $u$ is nonnegative with uniform positive lower bound, for each $\epsilon > 0$, there exists $0 < \alpha < 1$ such that $u \in C^{\alpha}((\epsilon, +\infty) \times \mathbb{H}^n)$.
\end{theorem}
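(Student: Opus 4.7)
The starting point is to exploit that the hypotheses $\delta\leq u\leq M$ (the upper bound coming from the $L^p$--$L^\infty$ smoothing of Theorems 1--2 applied at time $\epsilon/2$) eliminate the degeneracy of the porous-medium regime ($m>1$) and the singularity of the fast-diffusion regime ($m<1$). The equation then becomes a linear non-local parabolic one. Indeed, by the fundamental theorem of calculus,
\begin{equation*}
u^m(x,t)-u^m(y,t)=b(x,y,t)\bigl(u(x,t)-u(y,t)\bigr),\quad b(x,y,t):=\int_0^1 m\bigl(su(x,t)+(1-s)u(y,t)\bigr)^{m-1}\diff s,
\end{equation*}
and since $su(x,t)+(1-s)u(y,t)\in[\delta,M]$, the weight $b$ is symmetric in $(x,y)$ and sandwiched between two positive constants $c_1\leq c_2$ depending only on $m,\delta,M$. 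Substituting into the integral representation of $\mathcal{L}(u^m)$ puts the equation in the form
\begin{equation*}
\partial_t u+\widetilde{\mathcal{L}}_t u=0,\qquad \widetilde{\mathcal{L}}_t u(x)=\text{P.V.}\int_{\mathbb{H}^n}(u(x)-u(y))\,\widetilde{J}(x,y,t)\,\diff\mu(y),
\end{equation*}
with $\widetilde{J}:=bJ$ inheriting the symmetry and two-sided comparability in (1.3) with enlarged constant $\widetilde{\Lambda}=(c_2/c_1)\Lambda$.

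The problem is thus reduced to an interior H\"older estimate for solutions of a linear non-local parabolic equation on $\mathbb{H}^n$ with symmetric measurable kernel comparable to $|x^{-1}\cdot y|_{\mathbb{H}^n}^{-Q-\alpha}$. I would adapt the De~Giorgi--Nash--Moser scheme developed on $\mathbb{R}^n$ by Caffarelli--Chan--Vasseur and Kassmann to the Heisenberg setting. On a parabolic Heisenberg cylinder $Q_r(x_0,t_0)=B_r(x_0)\times(t_0-r^\alpha,t_0)$ the plan consists of three steps: (i) derive a Caccioppoli-type energy inequality for truncations $(u-k)_\pm$ in the fractional Gagliardo semi-norm on $\mathbb{H}^n$; (ii) prove local boundedness by De~Giorgi iteration, using the fractional Sobolev embedding on Koranyi balls; and (iii) establish a growth/expansion-of-positivity lemma whose iteration---combined with left-invariance and the anisotropic dilations $\delta_\lambda$ of $\mathbb{H}^n$---yields geometric decay of oscillation across dyadic scales. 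Summing the dyadic decrements produces a H\"older modulus of continuity with an exponent depending only on $\alpha,Q,\widetilde{\Lambda}$. Applying the estimate at every unit cylinder contained in $(\epsilon,\infty)\times\mathbb{H}^n$, which is possible after a time shift by $\epsilon/2$ and thanks to the left-invariance of the admissible class of kernels, delivers the claimed global H\"older regularity.

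The principal obstacle is transplanting the De~Giorgi--Nash--Moser machinery from $\mathbb{R}^n$ to the sub-Riemannian geometry of $\mathbb{H}^n$: constructing cutoff functions on Koranyi balls whose horizontal gradient and non-local tails are simultaneously controlled, establishing the needed fractional Sobolev--Poincar\'e inequality on those balls, and verifying the growth lemma for merely measurable kernels. The homogeneous group structure, global volume doubling, and the intrinsic fractional Sobolev theory on $\mathbb{H}^n$ (as in Roncal--Thangavelu) make all three achievable, but the arguments must be redone with left translations in place of Euclidean ones, and the non-commutativity of the group law handled carefully in the cutoff and covering steps. The resulting H\"older exponent depends only on $\alpha,Q,\Lambda,m,\delta,\|u\|_\infty$, precisely the uniform statement asserted (with the caveat that the theorem reuses the symbol $\alpha$ for both the fractional order and the H\"older exponent).
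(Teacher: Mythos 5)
Your linearization via the mean value theorem, writing $\mathcal{L}(u^m)=\widetilde{\mathcal{L}}_t u$ with $\widetilde{J}=bJ$, is a correct idea and does exploit the positive lower bound to recover uniform ellipticity; but it is a genuinely different decomposition from the one the paper uses. The paper instead substitutes $v=u^m$ and keeps the equation in the form $\partial_t\beta(v)+\mathcal{L}v=0$ with $\beta(s)=|s|^{1/m-1}s$, so that the original, left-invariant kernel $J$ is never touched and the nonlinearity lives entirely in the time derivative; the De Giorgi machinery (the energy inequality in Lemma~8, the two De Giorgi lemmas 9 and 10, and the oscillation Lemma~11) is developed directly for this nonlinear $\beta$-form, under the sole requirement $0<C_1\le\beta'(s)\le C_2$ on the relevant range. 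The positive lower bound on $u$ then enters precisely to guarantee this uniform two-sided bound on $\beta_k'$ through the whole iteration $u_k(x,t)=(u(R^{-k}x,R^{-\alpha k}t)-\mu_k)/\nu_k$, yielding a uniform oscillation decay $\theta$ and hence H\"older continuity. Compared to yours, the paper's route has two advantages: (i) the rescaled kernels $J_k$ remain of the same fixed left-invariant type, so none of the Heisenberg-specific estimates need to accommodate a merely measurable, time-dependent, solution-dependent kernel, whereas in your scheme $\widetilde{J}(x,y,t)=b(x,y,t)J(x,y)$ loses left-invariance (and the evenness $J(x,x\cdot y)=J(x,x\cdot y^{-1})$ in (1.3)) and becomes time-dependent, so you would have to verify that the linear De Giorgi theory you wish to import tolerates such kernels; (ii) keeping $\beta$ explicit is exactly what makes the subsequent Theorem~5 (H\"older regularity across vanishing points for $m\ge1$) possible, since there one must track the degeneration of $\beta_k'$ quantitatively, which is impossible once the nonlinearity has been absorbed into the kernel. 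You also treat the Heisenberg De Giorgi machinery as the open obstacle, without noticing that Lemmas~8--11 of the paper already carry out precisely that program (energy inequality, two De Giorgi lemmas, oscillation lemma) on $\mathbb{H}^n$ in the $\beta$-formulation; once $\beta'$ is uniformly bounded, Theorem~4 follows by feeding the positivity bound into Lemma~11 and iterating as in the proof of Theorem~3.
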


Moreover, if $m \geq 1$, the Hölder regularity will also hold for the sign-changing solutions since we can overcome the difficulty arising from the degeneracy at the vanishing points in a quantitative way. Here, the vanishing points $(x_0,t_0)$ are the points satisfying $u(x_0,t_0) = 0$. 

\begin{theorem}
    Let $m \geq 1$, $0 < \alpha < 2$. J satisfies assumption (1.3). Let $u$ be the bounded weak solution of the Cauchy problem (1.1). Then for each $\epsilon > 0$, there exists $0 < \alpha < 1$ such that $u \in C^{\alpha}((\epsilon, +\infty) \times \mathbb{H}^n)$.
\end{theorem}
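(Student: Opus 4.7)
The plan is to combine an oscillation-reduction iteration scheme with the non-degenerate Hölder regularity already established in Theorem 4. Fix a point $(x_0,t_0)$ with $t_0>\epsilon$ and work inside the parabolic cylinder $Q_r(x_0,t_0) := B_r(x_0)\times(t_0-r^\alpha,t_0)$ associated to the intrinsic scaling of the operator $\mathcal{L}$ on $\mathbb{H}^n$. Let $M:=\|u\|_\infty$, set $r_k:=\lambda^k r_0$ for some $\lambda\in(0,1)$ to be chosen, and let $\omega_k:=\mathrm{osc}_{Q_{r_k}} u$. The goal is to prove a geometric decay $\omega_{k+1}\leq\theta\,\omega_k$ with $\theta<1$ independent of $k$; this at once yields $\omega_k\lesssim\lambda^{k\sigma}$, which is Hölder continuity with some exponent $\sigma>0$.

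At each scale, I would run a dichotomy between a \textbf{non-degenerate regime}, in which $|u|\geq \tfrac14\omega_k$ throughout $Q_{r_k}$ (so $u$ stays on one side of zero with size comparable to $\omega_k$), and a \textbf{nearly-vanishing regime}, in which $u$ attains values close to zero inside $Q_{r_k}$. In the non-degenerate regime, after possibly flipping sign, rescale via $\tilde u(y,s):=\omega_k^{-1}u(x_0\cdot\delta_{r_k}y,\,t_0+r_k^{\alpha}s)$; the intrinsic dilation of $\mathbb{H}^n$ together with the order $\alpha$ of $\mathcal{L}$ ensures that $\tilde u$ is a bounded weak solution of an equation of the same form, with kernel still verifying the two-sided bound (1.3) with the same constant $\Lambda$, and with $\tilde u\geq c_0>0$. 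Theorem 4 (applied in the rescaled cylinder) then gives a universal Hölder estimate, hence a reduction $\omega_{k+1}\leq(1-\eta)\omega_k$ for some $\eta\in(0,1)$ that depends only on $m,\alpha,Q,\Lambda,M$.

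In the nearly-vanishing regime the crucial use of $m\geq 1$ is that the nonlinearity $\Phi(u)=|u|^{m-1}u$ is locally Lipschitz, with $|\Phi(u)-\Phi(v)|\leq m(2M)^{m-1}|u-v|$ and, more importantly, $\|\Phi(u)\|_{L^\infty(Q_{r_k})}\leq C\omega_k^m\ll\omega_k$ once $\omega_k$ is small. Setting $w:=u-\bar u_k$ for the midpoint $\bar u_k$ of $u$ on $Q_{r_k}$, the equation reads $\partial_t w+\mathcal{L}\Phi(u)=0$ with the source now bounded by $C\omega_k^m$ in $L^\infty$. Running a De Giorgi–type level-set iteration on $w$, adapted to the sub-Riemannian setting of $\mathbb{H}^n$ (using the layer-cake estimate against the lower bound in (1.3) and absorbing the small forcing), produces a reduction of oscillation $\omega_{k+1}\leq(1-\eta')\omega_k$, using exactly the same scheme as in the positive case but where the degeneracy is swallowed by the smallness of $\Phi(u)$ itself.

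The combined dichotomy gives the desired geometric decay in both alternatives, yielding Hölder continuity on $Q_{r_0/2}(x_0,t_0)$ with an exponent and constant depending only on $m,\alpha,Q,\Lambda,\epsilon,M$. The main obstacle, and where the bulk of the technical work lies, is the nearly-vanishing regime: one must carefully handle the nonlocal tail contributions from $\mathbb{H}^n\setminus B_{r_k}(x_0)$ in the weak formulation, since $\mathcal{L}$ is genuinely nonlocal and picks up values of $u$ over all of $\mathbb{H}^n$. This is controlled by truncating at scale $r_k$, noting that $|J(x,y)|\leq \Lambda|x^{-1}\cdot y|^{-Q-\alpha}$ gives an integrable tail bounded by $CM r_k^{-\alpha}$, and then matching this tail against the intrinsic parabolic time-scale $r_k^\alpha$, so that the tail contribution is absorbable in the iteration on the same footing as in the classical Caffarelli–Chan–Vasseur framework. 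Once the tail estimate is in place, the iteration closes and the Hölder exponent is uniform on $(\epsilon,\infty)\times\mathbb{H}^n$.
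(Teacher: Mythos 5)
Your proposal correctly identifies the crux of the matter --- the near-vanishing regime where the porous-medium degeneracy ($m\geq 1$) undermines a naive oscillation iteration --- and the overall dichotomy framework is in the same spirit as the paper's. However, the proposed treatment of the degenerate alternative contains a genuine gap.

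You write the equation as $\partial_t w+\mathcal{L}\Phi(u)=0$ with $w=u-\bar u_k$ and claim the ``source'' $\mathcal{L}\Phi(u)$ is bounded by $C\omega_k^m$ in $L^\infty$. This is not correct: an $L^\infty$ bound on $\Phi(u)$ does not control $\mathcal{L}\Phi(u)$, since $\mathcal{L}$ is an unbounded operator of order $\alpha$. The local contribution at scale $r_k$ is of order $\|\Phi(u)\|_{L^\infty}r_k^{-\alpha}\sim\omega_k^m r_k^{-\alpha}$, which matched against the parabolic time scale $r_k^\alpha$ does not produce the smallness you need. More fundamentally, $\mathcal{L}\Phi(u)$ is the \emph{principal} term, not a forcing term; if you view it as a negligible perturbation, there is no diffusive mechanism left to drive a De Giorgi oscillation reduction. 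The true problem is that near vanishing the effective diffusivity $\sim m|u|^{m-1}$ degenerates, so the oscillation decays more slowly than the standard parabolic time scale allows, and no amount of tail estimation against $r_k^\alpha$ repairs this.

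The paper's resolution, which is the missing ingredient in your proposal, is intrinsic scaling à la DiBenedetto (as flagged in Remark 4). One works in cylinders $\mathcal{Q}_{R_k}(\omega_k)=B_{R_k^{-1}}\times(-R_k^{-\alpha}\omega_k^{-\sigma},0)$ with $\sigma=1-\tfrac{1}{m}$, stretching the time interval by the factor $\omega_k^{-\sigma}$ precisely to compensate for the degenerate diffusivity. After this rescaling the iterated nonlinearities $\beta_k$ satisfy $\beta_k'(s)=|s+\mu_{k-1}/\omega_{k-1}|^{-\sigma}$, which remain uniformly bounded above and below on $\tfrac14\leq s\leq 2$ as long as $0\leq\mu_k/\omega_k\leq\tfrac32$ --- exactly the alternative tracked in the near-vanishing case. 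Under this rescaling the oscillation reduction Lemma~11 applies with constants $a,\theta$ independent of $k$, closing the iteration. The dichotomy you set up and the fall-back to Theorem~4 in the non-degenerate case are both sound; replace the ``small source absorption'' step by the intrinsic time rescaling, and verify the consistency condition $\mu_k/\omega_k$ stays bounded (or switch to the non-degenerate branch when it does not), and the argument becomes the paper's.
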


\begin{remark}
    The modulus of continuity follows the method introduced by De Giorgi in his classical proof to elliptic equations, see \autocite{sdl_dgm}. The linear nonlocal operator has been considered by Caffarelli, Chan and Vasseur in \autocite{rtfp_cl} using the De Giorgi's method. Furthermore, the Hölder modulus of continuity for positive solution with uniform positive lower bound to the analogous nonlocal filtration type equation on $\mathbb{R}^n$ has been well studied in \autocites{ttp_ai}{nfe_dpa}{rtf_dpa}, by using the non-quadratic energies and the De Giorgi's method.
\end{remark}

\begin{remark}
    The new result is, for the porous medium type equations ($m\geq 1$), the uniform H\"older continuity holds for all the points including the vanishing points. We constructed the iterated sequence of solutions with iterated nonlinearity functions in decreasing space-time cylinders. As the function value $u(\cdot,t)$ approaches zero, the derivatives of the iterated nonlinearity functions are not uniformly bounded, which results in the degeneracy of the oscillation reduction. To overcome this difficulty, we work on cylinders suitably scaled to reflect in a precise quantitative way the degeneracy at the vanishing points using the idea from \autocites{hef_de}{tpm_vj}. 
\end{remark}

\section{Preliminaries}
\subsection{Heisenberg group $\mathbb{H}^n$}
We recall some definitions and a few well-known facts concerning with the Heisenberg group $\mathbb{H}^n$. For further details we refer to the book by Fischer and Ruzhansky, \autocite{qon_fv}.

We identify the Heisenberg group $\mathbb{H}^n$ with $\mathbb{R}^{2n+1}$. An element in the Heisenberg group $\mathbb{H}^n$ is denoted by 
$$x:=(z,s)=(\xi_1,\cdots,\xi_n,\eta_1,\cdots,\eta_n,s).$$
For any $x,x'\in \mathbb{R}^{2n+1}$, the group multiplication is given by
\begin{equation}
    \begin{aligned}
        x\cdot x':&=(\xi+\xi',\eta+\eta',s+s'+2\langle\eta,\xi'\rangle-2\langle\xi,\eta'\rangle)\nonumber \\
        &=(\xi_1+\xi'_1,\cdots,\xi_n+\xi'_n,\eta_1+\eta'_1,\cdots,\eta_n+\eta'_n,s+s'+2\sum_{i=1}^n(\eta_i\xi'_i-\xi_i\eta'_i)).
    \end{aligned}
\end{equation}
where $\langle \cdot,\cdot,\rangle$ is the standard inner product in $\mathbb{R}^n$.

The neutral element of $\mathbb{H}^n$ is $0_{\mathbb{H}^n}=(0_{\mathbb{R}^n},0_{\mathbb{R}^n},0)$ and the inverse $(z,s)^{-1}$ of the element $(z,s)$ is $(-z,-s)$. Define the dilations $\delta_\lambda$ on $\mathbb{H}^n$,
\begin{equation}
    \begin{aligned}
        \delta_\lambda(x)=(\lambda\xi,\lambda\eta,\lambda^2 s),~~\lambda>0.\nonumber
    \end{aligned}
\end{equation}

The Heisenberg group $\mathbb{H}^n$ can be considered as a sub-Riemannian manifold. The orthonormal basis of the Heisenberg group $\mathbb{H}^n$ is given by
\begin{equation}
    \begin{aligned}
        X_j:=\frac{\partial}{\partial \xi_j}+2\eta_j\frac{\partial }{\partial s}, ~~Y_j:=\frac{\partial}{\partial \eta_j}-2\xi_j\frac{\partial }{\partial s},~~1\leq j\leq n,~~T=\frac{\partial}{\partial s}.\nonumber
    \end{aligned}
\end{equation}
which is the Jacobian basis of the Heisenberg Lie algebra $\mathbf{h}^n$ of $\mathbb{H}^n$. Here,
\begin{equation}
    \begin{aligned}
        \text{rank}(\text{Lie}\{X_1,\cdots,X_n,Y_1,\cdots,Y_n,T\}(0_{\mathbb{H}^n}))=2n+1.\nonumber
    \end{aligned}
\end{equation}
This shows that $\mathbb{H}^n$ is a Carnot group with the following stratification
\begin{equation}
    \begin{aligned}
        \mathbf{h}^n=\text{span}\{X_1,\cdots,X_n,Y_1,\cdots,Y_n\}\oplus\text{span}\{T\}.\nonumber
    \end{aligned}
\end{equation}

Let $(|z|^4+t^2)^{1/4}=((|\xi|^2+|\eta|^2)^2+t^2)^{1/4}$ be the homogeneous quasi-norm on $\mathbb{H}^n$, which is denote by $|{\cdot}|_{\mathbb{H}^n}$. The Kor\'anyi-Cygan metric $d_{\mathbb{H}^n}:\mathbb{H}^n\times \mathbb{H}^n\rightarrow\mathbb{R}$ is defined as
\begin{equation}
    \begin{aligned}
        d_{\mathbb{H}^n}(x,x')=|x^{-1}\cdot x'|_{\mathbb{H}^n}.\nonumber
    \end{aligned}
\end{equation}
For the sake of readability, the subscript will be often omitted without causing misunderstandings. The Lebesgue measure $\diff \mu(\cdot)$ on $\mathbb{R}^{2n+1}$ will be the Haar measure on $\mathbb{H}^n$ which is uniquely defined up to some positive constant. Let $Q=2n+2$ be the homogeneous degree corresponding to the automorphisms $(\delta_{\lambda})_{\lambda>0}$. 

For any fixed $x_0\in \mathbb{H}^n$ and $R>0$, denote with $B_R(x_0)$ the Kor\'anyi ball with center $x_0$ and radius $R$ defined as
\begin{equation}
    \begin{aligned}
        B_R(x_0):=\{x\in \mathbb{H}^n:|x_0^{-1}\cdot x|\leq R\}.\nonumber
    \end{aligned}
\end{equation}

Let $\Omega\subset \mathbb{H}^n$ be a domain, for each $f\in C^2(\Omega;\mathbb{R})$, the negative sublaplacian $\mathscr{L}$ is defined by,
\begin{equation}
    \begin{aligned}
        \mathscr{L} f:=-\sum _{i=1}^{n}X_i^2 f-\sum _{i=1}^{n}Y_i^2 f.\nonumber
    \end{aligned}
\end{equation}

The pure fractional powers of $\mathscr{L}^{\alpha/2}$ has been well defined and studied in \autocites{sea_fg,hso_fg,hif_ff}. For $\alpha>0$, the operator $\mathscr{L}^{\alpha/2}$ can be written as
\begin{equation}
    \begin{aligned}
        \mathscr{L}^{\alpha/2}=\int_0^{+\infty}\lambda^{\alpha/2}\diff E(\lambda)\nonumber
    \end{aligned}
\end{equation}
with domain 
$$W^{\alpha/2,2}(\mathbb{H}^n):=\{u\in L^2(\mathbb{H}^n): \norm {\mathscr{L}^{\alpha/4}u}_{L^2(\mathbb{H}^n)}< \infty\}.$$
Here $\diff E(\lambda)$ is the spectral resolution of $\mathscr{L}$ in $L^2(\mathbb{H}^n)$.

The fractional homogeneous Sobolev space $\dot{H}^{\alpha/{2}}(\mathbb{H}^n)$ is defined as the completion of $C_0^\infty (\mathbb{H}^n)$ with the norm 
$$\norm{u}_{\dot{H}^{\alpha/{2}}(\mathbb{H}^n)} = \norm {\mathscr{L}^{\alpha/{4}}u}_{L^2(\mathbb{H}^n)}.$$  

\subsection{Weak Solutions}
For simplicity, we denote $\norm{\cdot}_{L^p(\mathbb{H}^n)}$ by $\norm{\cdot}_p$ for $1\leq p\leq \infty$ and use the simplified notation $u^m$ instead of $|u|^{m-1}u$ for sign changing solutions. In order to define weak solution, we consider the bilinear Dirichlet form,
$$ \mathcal{E}_J(u,v)=\frac{1}{2}\int _{\mathbb{H}^n}\int _{\mathbb{H}^n}(u(x)-u(y))(v(x)-v(y))J(x,y)\diff \mu(x)\diff \mu(y).$$
For kernels satisfying the symmetry condition $J(x,x\cdot y)=J(x,x\cdot y^{-1})$ and functions $f,g\in C_0^2(\mathbb{H}^n)$, we have
$$<\mathcal{L}f,g>=\mathcal{E}_J(u,v).$$

Denote the space $\dot{\mathcal{H}}_{\mathcal{L}}(\mathbb{H}^n)$ as the closure of space $C_0^{\infty}(\mathbb{H}^n)$ under the seminorm $\mathcal{E}_J(u,u)^{\frac{1}{2}}$. We also define $\mathcal{H}_{\mathcal{L}}(\mathbb{H}^n)=\{f\in L^2(\mathbb{H}^n):\mathcal{E}_J(f,f)<\infty\}$. The condition (1.3) implies
\begin{equation}
    \begin{aligned}
        c_1(n,\alpha,\Lambda)\mathcal{E}_J(u,u)\leq \norm {\mathscr{L}^{\alpha/{4}}u}_{2}^2\leq c_2(n,\alpha,\Lambda)\mathcal{E}_J(u,u).
    \end{aligned}
\end{equation}

Now we are ready to define the weak solutions of Cauchy problem (1.1).
\begin{definition}
A function $u$ is a weak solution to equation (1.1) if:
\begin{itemize}
    \item $u\in C([0,\infty):L^1(\mathbb{H}^n))$ \rm{and} $u^m\in L^2_{\text{loc}}((0,\infty):\dot{\mathcal{H}}_{\mathcal{L}}(\mathbb{H}^n))$;
    \item $\int _0^\infty \int _{\mathbb{H}^n} u \partial _t \zeta-\int_0^\infty \mathcal{E}_J(u^m,\zeta)=0 $ \rm{for each} $\zeta\in C^2_{c}(\mathbb{H}^n\times(0,\infty))$;
    \item $u(x,0)=u_0(x)$ \rm{almost everywhere}.
\end{itemize}
\end{definition}

\noindent If the equation (1.1) is rewritten equivalently as 
\begin{equation}
    \left\{
        \begin{array}{ll}
            \partial_t w^{1/m}+ \mathcal{L}w = 0, & x\in \mathbb{H}^n,  t>0  \\
            w(x,0) = w_0(x) = u_0^m(x), & x \in \mathbb{H}^n. 
        \end{array}
    \right.
\end{equation}
with $w_0^{1/m}\in L^1(\mathbb{H}^n)$ and $w=u^m$. Equivalently, we define the weak solutions to the Cauchy problem (2.2).
  \begin{definition}
A function $w$ is a weak solution to equation (2.2) if:
\begin{itemize}
    \item $w^{1/m}\in C([0,\infty):L^1(\mathbb{H}^n))$ \rm{and} $w\in L^2_{\text{loc}}((0,\infty):\dot{\mathcal{H}}_{\mathcal{L}}(\mathbb{H}^n))$;
    \item $\int _0^\infty \int _{\mathbb{H}^n} w^{1/m} \partial _t \zeta-\int_0^\infty \mathcal{E}_J(w,\zeta)=0 $ for each $\zeta\in C^2_{c}(\mathbb{H}^n\times(0,\infty))$;
    \item $w(x,0)=w_0(x)$ almost everywhere.
\end{itemize}
\end{definition}

\subsection{Some Inequalities}
We quote or prove some important inequalities which are useful in the following sections. The Stroock Varopoulos inequality will be proved by the extension map which can be used to reconstruct the operator $\mathscr{L}^{\alpha/2}(u)$, see \autocites{hif_ff,epa_sp}. 
\begin{lemma}
    Let $0<\alpha<2$. If $\psi_1(v), v \in \dot{\mathcal{H}}^{\alpha/2}(\mathbb{H}^n) $ and $(\Psi')^2\leq \psi_1'(v)$, then
    \begin{equation*}
        \begin{aligned}
            \int_{\mathbb{H}^n}(\mathscr{L}^{\alpha/2}\psi_1(v))(v)\geq \int_{\mathbb{H}^n}(\mathscr{L}^{\alpha/2}\Psi(v))\Psi(v).
        \end{aligned}
    \end{equation*}
\end{lemma}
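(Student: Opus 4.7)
My plan is to leverage the Stinga--Torrea type extension reconstructing $\mathscr{L}^{\alpha/2}$ on $\mathbb{H}^n$ \autocite{epa_sp} and recast both sides of the Stroock--Varopoulos inequality as weighted Dirichlet integrals on $\mathbb{H}^n\times\mathbb{R}^+$. Let $V=V(x,y)$ be the extension of $v$, i.e.\ the solution of the degenerate elliptic equation
\begin{equation*}
\partial_{yy}V+\frac{1-\alpha}{y}\partial_y V-\mathscr{L}V=0 \quad\text{in }\mathbb{H}^n\times\mathbb{R}^+,\qquad V(\cdot,0)=v,
\end{equation*}
whose weighted Neumann trace at $y=0$ recovers $\mathscr{L}^{\alpha/2}v$ up to a constant. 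Integrating by parts and using that $V$ solves the extension PDE, one obtains the polarized energy identity
\begin{equation*}
\int_{\mathbb{H}^n}(\mathscr{L}^{\alpha/2}v)\phi\,\diff\mu = c_\alpha\int_0^\infty\!\!\int_{\mathbb{H}^n}y^{1-\alpha}\bigl(\nabla_\mathbb{H} V\cdot\nabla_\mathbb{H}\Phi+\partial_yV\,\partial_y\Phi\bigr)\,\diff\mu\,\diff y,
\end{equation*}
valid for \emph{any} sufficiently regular extension $\Phi$ of $\phi$, not just the harmonic one, since all bulk divergence terms cancel.

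Applying this with $\phi=\psi_1(v)$ and the (non-harmonic) choice $\Phi=\psi_1(V)$, the chain rule together with self-adjointness of $\mathscr{L}^{\alpha/2}$ gives
\begin{equation*}
\int_{\mathbb{H}^n}(\mathscr{L}^{\alpha/2}\psi_1(v))\,v\,\diff\mu=\int_{\mathbb{H}^n}\psi_1(v)(\mathscr{L}^{\alpha/2}v)\,\diff\mu = c_\alpha\int_0^\infty\!\!\int_{\mathbb{H}^n}y^{1-\alpha}\psi_1'(V)\,|\nabla V|^2\,\diff\mu\,\diff y,
\end{equation*}
where $|\nabla V|^2=|\nabla_\mathbb{H}V|^2+(\partial_yV)^2$. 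For the right-hand side of the inequality, let $W$ be the extension of $\Psi(v)$; by the Dirichlet principle $W$ minimizes the weighted energy among functions with trace $\Psi(v)$, and since $\Psi(V)$ is a competitor with this trace,
\begin{equation*}
\int_{\mathbb{H}^n}(\mathscr{L}^{\alpha/2}\Psi(v))\Psi(v)\,\diff\mu = c_\alpha\int_0^\infty\!\!\int_{\mathbb{H}^n}y^{1-\alpha}|\nabla W|^2\,\diff\mu\,\diff y \leq c_\alpha\int_0^\infty\!\!\int_{\mathbb{H}^n}y^{1-\alpha}(\Psi'(V))^2|\nabla V|^2\,\diff\mu\,\diff y.
\end{equation*}
The hypothesis $(\Psi')^2\leq\psi_1'$ evaluated pointwise at $V(x,y)$ then produces the desired inequality.

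The principal technical obstacle is regularity: making the above identities rigorous requires enough integrability and smoothness for $V$, and that $\psi_1(V)$ and $\Psi(V)$ be admissible competitors in the relevant weighted Sobolev space. The natural workaround is to truncate $\psi_1,\Psi$ by bounded Lipschitz approximants $\psi_1^N,\Psi^N$ still satisfying $((\Psi^N)')^2\leq(\psi_1^N)'$, establish the inequality first for smooth compactly supported $v$ using the extension regularity from \autocite{hif_ff,epa_sp}, then let $N\to\infty$ via monotone convergence on the nonnegative integrands, and finally pass from smooth $v$ to $v\in\dot{H}^{\alpha/2}(\mathbb{H}^n)$ by density using the norm comparability (2.1). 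As an alternative and independent sanity check, one may bypass the extension entirely and use the integro-kernel representation of $\mathscr{L}^{\alpha/2}$ recalled in the introduction, which reduces the inequality to the pointwise estimate $(\psi_1(a)-\psi_1(b))(a-b)\geq(\Psi(a)-\Psi(b))^2$; this follows from a single Cauchy--Schwarz, noting that $\psi_1$ is nondecreasing since $\psi_1'\geq(\Psi')^2\geq 0$: for $a>b$, $(\Psi(a)-\Psi(b))^2=\bigl(\int_b^a\Psi'(s)\,\diff s\bigr)^2\leq (a-b)\int_b^a(\Psi'(s))^2\,\diff s\leq(a-b)(\psi_1(a)-\psi_1(b))$.
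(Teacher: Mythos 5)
Your main argument is the same as the paper's: use the Stinga--Torrea extension to rewrite $\int(\mathscr{L}^{\alpha/2}\psi_1(v))v$ as a weighted Dirichlet energy, replace the harmonic extension of $\psi_1(v)$ by the non-harmonic competitor $\psi_1(V)$ with the same trace (legitimate because the polarized energy form against the harmonic extension of $v$ depends only on traces), apply the chain rule and the pointwise hypothesis $(\Psi')^2\le\psi_1'$, and finally invoke the Dirichlet principle on the right-hand side to compare $\Psi(V)$ with the harmonic extension of $\Psi(v)$. You make explicit several justifications that the paper's three-line computation leaves implicit --- in particular, why $E(\psi_1(v))$ may be replaced by $\psi_1(E(v))$ in the second equality, and that the Dirichlet principle is what yields the last inequality --- and you flag the density/truncation argument needed to pass to general $v\in\dot H^{\alpha/2}(\mathbb{H}^n)$. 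That is accurate and complete.

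Your "alternative sanity check" is actually a genuinely different and cleaner proof that the paper does not use: starting from the pointwise integro-kernel representation of $\mathscr{L}^{\alpha/2}$ (valid by the Roncal--Thangavelu and Ferrari--Franchi results recalled in the introduction), both sides become double integrals of $(\psi_1(v(x))-\psi_1(v(y)))(v(x)-v(y))\tilde R_{-\alpha}(y^{-1}x)$ and $(\Psi(v(x))-\Psi(v(y)))^2\tilde R_{-\alpha}(y^{-1}x)$ respectively, and the conclusion follows from the scalar inequality $(\psi_1(a)-\psi_1(b))(a-b)\ge(\Psi(a)-\Psi(b))^2$, which is one application of Cauchy--Schwarz to $\int_b^a\Psi'$. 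This route avoids the extension, the Dirichlet principle, and all the attendant regularity bookkeeping, at the price of relying on the pointwise kernel representation rather than the spectral/extension characterization. Either proof suffices; the kernel route is arguably more elementary and extends immediately to the general Dirichlet form $\mathcal{E}_J$ (which is how Lemma 2 is stated), whereas the extension route requires the norm equivalence (2.1) as an intermediary.
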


\begin{proof}
    Recall that there exists a Caffarelli-Silvestre type extension for the pure power fractional operator $\mathscr{L}^{\alpha/2}$, see \autocites{hif_ff,epa_sp}. Define the space $\dot{X}^{\alpha/2}(\mathbb{H}^n\times \mathbb{R}^{+})$ as the completion of $C^{\infty}(\mathbb{H}^n\times[0,\infty))$ under the seminorm
        $$\norm{w}_{\dot{X}^{\alpha/2}(\mathbb{H}^n\times \mathbb{R}^{+})}=\int_{0}^{\infty}\int_{\mathbb{H}^n}y^{1-\alpha}|\nabla_H w(x)|^2\diff\mu(x)\diff y.$$
    Here $\nabla_H$ is the horizontal gradient on $\mathbb{H}^n\times(0,\infty)$ defined by,
    \begin{equation}
        \begin{aligned}
            \nabla _{H}f:=\sum_{i=1}^{n}(X_i f)X_i+\sum_{i=1}^{n}(Y_i f)Y_i+(\partial_y f) \partial_y .\nonumber
        \end{aligned}
    \end{equation}
    and
    \begin{equation}
        \begin{aligned}
            |\nabla _{H}f|^2:=\sum_{i=1}^{n}|X_i f|^2+\sum_{i=1}^{n}|Y_i f|^2+|\partial_y f|^2.\nonumber
        \end{aligned}
    \end{equation}
    For every $u\in \dot{H}^{\alpha/2}(\mathbb{H}^n)$, there exists a norm-preserving extension map $E:\dot{H}^{\alpha/2}(\mathbb{H}^n)\rightarrow \dot{X}^{\alpha/2}(\mathbb{H}^n\times \mathbb{R}^{+})$ satisfying,
    \begin{equation*}
        \left\{
            \begin{array}{ll}
                (y^{1-\alpha}\mathscr{L}+y^{1-\alpha}\partial_{yy}+(1-\alpha)y^{-\alpha}\partial_y)E(u)(x,y)=0,&(x,y)\in \mathbb{H}^n\times \mathbb{R}^+ \\
                E(u)(x,0)=u(x), &x\in \mathbb{H}^n.
            \end{array}
        \right.
    \end{equation*}
    Moreover, the pure power opertaor $\mathscr{L}^{\alpha/2}$ can be reconstructed as the Neumann data of the extension map,
    \begin{equation*}
        \mathscr{L}^{\alpha/2}u(x)=-\mu_{\alpha}\lim\limits_{y\rightarrow 0^+}y^{1-\alpha}\frac{\partial {E(u)}}{\partial y}, ~~\mu_{\alpha}=2^{\alpha-1}\Gamma({\alpha/2})\Gamma(1-\alpha/2).
    \end{equation*}
    Then,
    \begin{equation*}
        \begin{aligned}
            \int_{\mathbb{H}^n}(\mathscr{L}^{\alpha/2}\psi_1(v))(v)&=\mu_{\alpha}\int_0^{\infty}\int_{\mathbb{H}^n}y^{1-\alpha}\langle\nabla_H E(\psi_1(v)),\nabla_H E(v) \rangle \diff \mu(x)\diff y\\
            &=\mu_{\alpha}\int_0^{\infty}\int_{\mathbb{H}^n}y^{1-\alpha}\langle\nabla_H \psi_1(E(v)),\nabla_H E(v) \rangle \diff \mu(x)\diff y\\
            &\geq\mu_{\alpha}\int_0^{\infty}\int_{\mathbb{H}^n}y^{1-\alpha}\langle\nabla_H E(\Psi(v)),\nabla_H E(\Psi(v)) \rangle \diff \mu(x)\diff y\\
            &=\int_{\mathbb{H}^n}(\mathscr{L}^{\alpha/2}\Psi(v))\Psi(v).
        \end{aligned}
    \end{equation*}
\end{proof}

From the equivalence of the norm:  $c_1(n,\alpha,\Lambda)\mathcal{E}_J(u,u)\leq \norm {\mathscr{L}^{\alpha/{4}}u}_{2}^2\leq c_2(n,\alpha,\Lambda)\mathcal{E}_J(u,u)$, we obtain
\begin{lemma}
    Let $0<\alpha<2$, then
    \begin{equation}
        \begin{aligned}
            \mathcal{E}_J(\psi_1(v),v)\geq c(n,\alpha,\Lambda)\mathcal{E}_J(\Psi(v),\Psi(v)),
        \end{aligned}
    \end{equation}
    whenever $\psi_1(v), v \in \dot{\mathcal{H}}_{\mathcal{L}}(\mathbb{H}^n)= \dot{\mathcal{H}}^{\alpha/2}(\mathbb{H}^n)$ and $(\Psi')^2\leq \psi_1'(v).$
\end{lemma}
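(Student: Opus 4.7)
The plan is to chain the pointwise Cauchy--Schwarz form of Stroock--Varopoulos with Lemma 1 and the norm equivalence (2.1). I will sandwich the rough form $\mathcal{E}_J$ between quantities controlled by the model form $\int(\mathscr{L}^{\alpha/2}(\cdot))(\cdot)$, so that after verifying that the relevant integrand is pointwise non-negative, the kernel comparison in (1.3) lets us reduce to a setting where Lemma 1 applies directly.

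First I would establish the pointwise inequality
\[
(\psi_1(a) - \psi_1(b))(a - b) \geq (\Psi(a) - \Psi(b))^2 \geq 0
\]
for all real $a\geq b$. Writing $(\Psi(a) - \Psi(b))^2 = \bigl(\int_b^a \Psi'(t)\,\diff t\bigr)^2$ and applying Cauchy--Schwarz, this is at most $(a-b)\int_b^a (\Psi'(t))^2 \,\diff t \leq (a-b)\int_b^a \psi_1'(t)\,\diff t = (a-b)(\psi_1(a) - \psi_1(b))$, using the hypothesis $(\Psi')^2 \leq \psi_1'$. Consequently the integrand defining $\mathcal{E}_J(\psi_1(v),v)$ is pointwise non-negative.

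Next, invoking the lower kernel bound $J(x,y) \geq \Lambda^{-1}|x^{-1}\cdot y|^{-(Q+\alpha)}$ from (1.3) together with the comparability of $|x^{-1}\cdot y|^{-(Q+\alpha)}$ to the kernel $\widetilde R_{-\alpha}(y^{-1}\cdot x)$ of $\mathscr{L}^{\alpha/2}$ recalled in the introduction, the non-negativity of the integrand yields
\[
\mathcal{E}_J(\psi_1(v), v) \geq c_3(n,\alpha,\Lambda)\int_{\mathbb{H}^n}(\mathscr{L}^{\alpha/2}\psi_1(v))\,v\,\diff \mu.
\]
Applying Lemma 1 and then the self-adjointness of $\mathscr{L}^{\alpha/4}$ on $L^2(\mathbb{H}^n)$ gives
\[
\int_{\mathbb{H}^n}(\mathscr{L}^{\alpha/2}\psi_1(v))\,v \;\geq\; \int_{\mathbb{H}^n}(\mathscr{L}^{\alpha/2}\Psi(v))\,\Psi(v) \;=\; \norm{\mathscr{L}^{\alpha/4}\Psi(v)}_2^2,
\]
and the lower bound in (2.1) converts this into $c_1(n,\alpha,\Lambda)\,\mathcal{E}_J(\Psi(v), \Psi(v))$. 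Collecting the constants produces the desired $c(n,\alpha,\Lambda)$.

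The only delicate step is inserting the explicit model kernel under the double integral while preserving the direction of the inequality, which is possible only because the pointwise Stroock--Varopoulos bound established in the first step guarantees that the integrand is non-negative. All remaining manipulations are routine uses of Lemma 1, self-adjointness, and the norm equivalence. The hypothesis $\psi_1(v), v \in \dot{\mathcal{H}}_{\mathcal{L}}(\mathbb{H}^n) = \dot{H}^{\alpha/2}(\mathbb{H}^n)$ ensures every integral above is finite, so no additional care about the behaviour of $\psi_1,\Psi$ near infinity is needed.
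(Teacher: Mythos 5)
Your proof is correct, but it takes a longer route than necessary, and a shorter route renders most of it superfluous. The pointwise inequality you establish in your first step,
\[
(\psi_1(a) - \psi_1(b))(a - b) \;\geq\; (\Psi(a) - \Psi(b))^2 \;\geq\; 0,
\]
is already the whole proof. Plugging $a = v(x)$, $b = v(y)$ into the double integral defining $\mathcal{E}_J$ and using only that $J(x,y) \geq 0$ gives directly
\[
\mathcal{E}_J(\psi_1(v),v) = \tfrac{1}{2}\iint (\psi_1(v(x)) - \psi_1(v(y)))(v(x)-v(y))J(x,y) \;\geq\; \tfrac{1}{2}\iint (\Psi(v(x)) - \Psi(v(y)))^2 J(x,y) = \mathcal{E}_J(\Psi(v),\Psi(v)),
\]
so the lemma holds with $c = 1$, without invoking Lemma 1, the kernel $\widetilde R_{-\alpha}$, or the norm equivalence $(2.1)$. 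The paper's one-line justification (``from the equivalence of the norm'') suggests the intended argument is the one you gave — transfer to the pure-power operator, apply Lemma 1, and transfer back — and that route is logically sound, with the pointwise non-negativity of the integrand correctly flagged as the step that allows the kernel comparison to be inserted under the double integral. But it produces a worse constant for no gain, and it implicitly relies on the symmetry $\widetilde R_{-\alpha}(z) = \widetilde R_{-\alpha}(z^{-1})$ of the model kernel to identify $\int(\mathscr{L}^{\alpha/2}f)g$ with the symmetric bilinear form, which your direct argument avoids entirely.
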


The Hardy-Littlewood-Sobolev's inequality also holds for the pure power fractional operator $\mathscr{L}^{\alpha/2}$ on $\mathbb{H}^n$, which had been proved in \cite{sea_fg},
\begin{lemma}
    For each $v$ such that $\mathscr{L}^{\alpha/2}v\in L^r(\mathbb{H}^n)$, $1<r<Q/\alpha$, $0<\alpha<2$, the following inequality holds:
    \begin{equation}
        \begin{aligned}
            \norm{v}_{{r_1}}\leq C(Q,r,\alpha)\norm{\mathscr{L}^{\alpha/2}v}_{r}.
        \end{aligned}
    \end{equation}
    Here $r_1=\frac{Qr}{Q-\alpha r}$.
\end{lemma}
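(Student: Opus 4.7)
The plan is to deduce the inequality by representing $v$ through the Riesz potential $\mathscr{L}^{-\alpha/2}$ and then proving a Hardy–Littlewood–Sobolev estimate for that potential on $\mathbb{H}^n$ in the spirit of Hedberg's classical argument. First I would write, using the spectral resolution,
\begin{equation*}
    v(x) = \mathscr{L}^{-\alpha/2}\bigl(\mathscr{L}^{\alpha/2} v\bigr)(x) = \int_{\mathbb{H}^n} k_\alpha(y^{-1}\cdot x)\bigl(\mathscr{L}^{\alpha/2} v\bigr)(y)\,\diff\mu(y),
\end{equation*}
where $k_\alpha$ is the convolution kernel of $\mathscr{L}^{-\alpha/2}$. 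Using the subordination formula $\mathscr{L}^{-\alpha/2} = c_\alpha \int_0^\infty t^{\alpha/2-1} e^{-t\mathscr{L}}\,\diff t$ together with the well-known Gaussian-type upper and lower bounds for the heat kernel of $\mathscr{L}$ on $\mathbb{H}^n$, one obtains the two-sided estimate $k_\alpha(x) \asymp |x|_{\mathbb{H}^n}^{-(Q-\alpha)}$. This reduces the lemma to the HLS-type inequality for the Riesz potential on $\mathbb{H}^n$,
\begin{equation*}
    \bigl\| I_\alpha^{\mathbb{H}^n} f \bigr\|_{r_1} \leq C(Q,r,\alpha)\|f\|_r, \quad I_\alpha^{\mathbb{H}^n}f(x) := \int_{\mathbb{H}^n} \frac{f(y)}{|y^{-1}\cdot x|_{\mathbb{H}^n}^{Q-\alpha}}\,\diff\mu(y).
\end{equation*}

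Next I would prove this latter inequality by Hedberg's splitting trick, exploiting only the homogeneous structure of $\mathbb{H}^n$ (the fact that the Korányi balls $B_R(x)$ have volume comparable to $R^Q$, and that the Hardy–Littlewood maximal function $M$ associated with these balls is bounded on $L^r(\mathbb{H}^n)$ for $r>1$). For $R>0$, split the integral into $\{|y^{-1}\cdot x|\leq R\}$ and $\{|y^{-1}\cdot x|>R\}$. Decomposing the near part dyadically by $\{2^{-k-1}R < |y^{-1}\cdot x|\leq 2^{-k}R\}$, $k\geq 0$, yields the bound $C R^{\alpha} M f(x)$, while Hölder's inequality on the far part gives $C R^{\alpha - Q/r}\|f\|_r$ (this is where the hypothesis $r<Q/\alpha$ is needed to ensure integrability at infinity of the kernel raised to the conjugate exponent). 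Optimising in $R$ produces the pointwise estimate
\begin{equation*}
    |I_\alpha^{\mathbb{H}^n} f(x)| \leq C\bigl(M f(x)\bigr)^{r/r_1}\|f\|_r^{1-r/r_1},
\end{equation*}
and raising to the $r_1$-th power and integrating, followed by the $L^r$-boundedness of $M$ on $\mathbb{H}^n$, yields the desired inequality.

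The main technical obstacle, and the only place where the geometry of $\mathbb{H}^n$ really enters, is the two-sided bound $k_\alpha(x) \asymp |x|_{\mathbb{H}^n}^{-(Q-\alpha)}$ for the Riesz kernel; everything else is a formal consequence of the doubling condition on $\mathbb{H}^n$ and the $L^r$-theory of the maximal function, which are standard for spaces of homogeneous type. Once the kernel estimate is in hand, Hedberg's argument transplants verbatim from $\mathbb{R}^n$ to $\mathbb{H}^n$ because the argument only uses volume doubling and the triangle inequality of the Korányi quasi-norm. Since this kernel estimate and the ensuing HLS inequality have been obtained in \cite{sea_fg}, invoking that result directly, or reproducing the sketch above, completes the proof.
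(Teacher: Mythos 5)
Your proposal is correct. The paper itself offers no proof of this lemma at all — it simply defers to the cited reference \cite{sea_fg} — and you likewise conclude by invoking that same reference, so at the level of what is formally needed the two treatments coincide. Your Hedberg-style sketch (subordination to get $k_\alpha(x)\asymp |x|_{\mathbb{H}^n}^{-(Q-\alpha)}$, dyadic near/far splitting against the Korányi maximal function, then optimisation in $R$ and $L^r$-boundedness of $M$) is the standard and correct way to establish the HLS inequality on a stratified group, and it fills in the argument the paper leaves implicit; in particular the exponent bookkeeping ($r/r_1=(Q-\alpha r)/Q$, and $r<Q/\alpha$ exactly ensuring convergence of the far part) checks out.
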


By the Hardy-Littlewood-Sobolev's inequality and Hölder's inequality, we obtain, 
\begin{lemma}
    Let $p>1$, $r>1$, $0<\alpha<2$. For each $v\in L^p(\mathbb{H}^n)$ with $\mathscr{L}^{\alpha/2}v\in L^r(\mathbb{H}^n)$, we have
    \begin{equation}
        \begin{aligned}
            \norm{v}_{{r_2}}^{\gamma+1}\leq C(p,r,\alpha,Q)\norm{\mathscr{L}^{\alpha/2}v}_{r}\norm{v}_{p}^{\gamma}.
        \end{aligned}
    \end{equation}
    where $r_2=\frac{Q(rp+r-p)}{r(Q-\alpha)}$, $\gamma=\frac{p(r-1)}{r}$.
\end{lemma}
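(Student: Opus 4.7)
The plan is to combine Lemma 4 (Hardy--Littlewood--Sobolev on $\mathbb{H}^n$) with the standard log-convexity of $L^{q}$ norms. Heuristically, Lemma 4 controls $\|v\|_{r_1}$ by $\|\mathscr{L}^{\alpha/2}v\|_{r}$, while the inequality we want involves $\|v\|_{r_2}$; since the exponent $r_2$ will turn out to be an interpolate between $p$ and $r_1$, Hölder's inequality will recover the mixed product $\|\mathscr{L}^{\alpha/2}v\|_{r}\|v\|_{p}^{\gamma}$ after one chooses the interpolation parameter correctly and raises the resulting estimate to the power $\gamma+1$.

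Concretely, first I would apply Lemma 4, which (tacitly, under the condition $1<r<Q/\alpha$ so that $r_1<\infty$) yields
\[
\|v\|_{r_1}\leq C(Q,r,\alpha)\,\|\mathscr{L}^{\alpha/2}v\|_{r},\qquad r_1=\frac{Qr}{Q-\alpha r}.
\]
Next, for any $\theta\in(0,1)$ and any $q$ with $\tfrac{1}{q}=\tfrac{\theta}{r_1}+\tfrac{1-\theta}{p}$, Hölder's inequality gives $\|v\|_{q}\leq \|v\|_{r_1}^{\theta}\|v\|_{p}^{1-\theta}$. I would then choose
\[
\theta=\frac{1}{\gamma+1}=\frac{r}{rp+r-p},\qquad 1-\theta=\frac{p(r-1)}{rp+r-p},
\]
so that, by design, $\theta(\gamma+1)=1$ and $(1-\theta)(\gamma+1)=\gamma$. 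Raising Hölder's inequality to the power $\gamma+1$ and substituting the HLS bound for the $\|v\|_{r_1}$ factor then produces exactly $\|v\|_{q}^{\gamma+1}\leq C\|\mathscr{L}^{\alpha/2}v\|_{r}\|v\|_{p}^{\gamma}$.

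All that remains is to verify that this choice of $\theta$ identifies $q$ with the exponent $r_2$ in the statement. A direct computation shows
\[
\frac{\theta}{r_1}+\frac{1-\theta}{p}=\frac{Q-\alpha r}{Q(rp+r-p)}+\frac{r-1}{rp+r-p}=\frac{(Q-\alpha r)+Q(r-1)}{Q(rp+r-p)}=\frac{r(Q-\alpha)}{Q(rp+r-p)}=\frac{1}{r_2}.
\]
There is no real obstacle here: the argument is the classical Gagliardo--Nirenberg-style interpolation, and both ingredients (Lemma 4 and Hölder) transplant verbatim from $\mathbb{R}^{n}$ to $(\mathbb{H}^n,\diff\mu)$ since they rely only on the underlying measure structure and the already-established HLS inequality on $\mathbb{H}^{n}$. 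The only subtlety worth flagging is the implicit constraint $r<Q/\alpha$ inherited from Lemma 4; outside this range $r_1$ is no longer a finite Sobolev exponent and a separate (BMO- or $L^{\infty}$-type) endpoint would be needed, but this regime does not arise in the subsequent applications of Lemma 5 in the paper.
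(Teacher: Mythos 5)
Your argument is correct and matches the approach the paper itself indicates (the lemma is preceded only by the remark ``By the Hardy-Littlewood-Sobolev's inequality and Hölder's inequality, we obtain''): HLS gives control of $\|v\|_{r_1}$, log-convexity of $L^q$ norms interpolates between $r_1$ and $p$, and your choice $\theta=\frac{1}{\gamma+1}$ recovers $r_2$ as you verify. Your remark about the implicit constraint $r<Q/\alpha$ from Lemma 4 is a fair and worthwhile observation.
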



\section{Existence and Uniqueness of Weak Solutions}

\subsection{Existence of Weak Solutions for Bounded Initial Data}
Crandall and Liggett \autocite{gos_cm} introduced a useful method to construct mild solutions, which is called Implicit Time Discretization. In order to apply the Crandall-Liggett's Theorem, we prove the existence of weak solution to the following equation,
\begin{equation}
    \begin{aligned}
        v^{1/m}+\mathcal{L}v=g ~~\text{in} ~~\mathbb{H}^n, ~~g\in L^1(\mathbb{H}^n)\cap L^{\infty}(\mathbb{H}^n).
    \end{aligned}
\end{equation}
and the contractivity of solutions in $L^1(\mathbb{H}^n)$. 
\begin{proposition}
    For each $g\in L^1(\mathbb{H}^n)\cap L^{\infty}(\mathbb{H}^n)$, there exists a unique weak solution $v\in \dot{\mathcal{H}}_{\mathcal{L}}(\mathbb{H}^n)$ to the equation (3.1), such that the following weak formulation holds,
    \begin{equation}
        \begin{aligned}
            \mathcal{E}_J(v,\zeta)+\int_{\mathbb{H}^n}v^{1/m}\zeta-\int_{\mathbb{H}^n}g\zeta=0, ~~\text{for}~\text{every}~\zeta\in C_0^\infty(\mathbb{H}^n).
        \end{aligned}
    \end{equation}
    Furthermore, $\norm{v^{1/m}}_\infty\leq \norm{g}_\infty$ and $\norm{v^{1/m}}_1\leq \norm{g}_1$. If $v$ and $\Tilde{v}$ are solutions corresponding to $g$ and $\Tilde{g}$ respectively, the following T-contraction inequality holds,
    \begin{equation}
        \begin{aligned}
            \int _{\mathbb{H}^n}[v^{1/m}(x)-\Tilde{v}^{1/m}(x)]_+\diff \mu(x)\leq \int _{\mathbb{H}^n}[g(x)-\Tilde{g}(x)]_+\diff \mu(x).
        \end{aligned}
    \end{equation}
\end{proposition}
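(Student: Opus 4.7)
The plan is to obtain $v$ as the unique minimizer of the strictly convex functional
\[
F(w) = \tfrac{1}{2}\mathcal{E}_J(w,w) + \tfrac{m}{m+1}\int_{\mathbb{H}^n}|w|^{\frac{m+1}{m}}\,d\mu - \int_{\mathbb{H}^n}gw\,d\mu
\]
on the reflexive Banach space $X := \dot{\mathcal{H}}_{\mathcal{L}}(\mathbb{H}^n)\cap L^{(m+1)/m}(\mathbb{H}^n)$. First I would check that $F$ is well-defined on $X$: by Hölder and interpolation, $|\int gw|\le \|g\|_{m+1}\|w\|_{(m+1)/m}$ with $\|g\|_{m+1}\le \|g\|_\infty^{m/(m+1)}\|g\|_1^{1/(m+1)}$, so the linear term is continuous on $X$. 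A Young-type splitting absorbs the linear term into $\int|w|^{(m+1)/m}$ and yields coercivity; strict convexity of both $\mathcal{E}_J(\cdot,\cdot)$ and $|w|^{(m+1)/m}$, together with standard weak lower semicontinuity, gives a unique minimizer $v$ via the direct method. Its first variation in $\zeta\in C^\infty_c(\mathbb{H}^n)$ is precisely (3.2), since $\tfrac{d}{d\epsilon}\big|_{\epsilon=0}\tfrac{m}{m+1}\int|v+\epsilon\zeta|^{(m+1)/m}\,d\mu = \int v^{1/m}\zeta\,d\mu$.

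For the $L^\infty$ bound I would use Stampacchia truncation. Set $K = \|g\|_\infty^m$ and test (3.2), extended by density and the Markov property of $\mathcal{E}_J$, with $\zeta = (v-K)_+\in\dot{\mathcal{H}}_{\mathcal{L}}(\mathbb{H}^n)$. On $\{v>K\}$ one has $v^{1/m}>K^{1/m}=\|g\|_\infty\ge g$, so $\int(v^{1/m}-g)(v-K)_+\,d\mu\ge 0$. Combined with the Markovian inequality $\mathcal{E}_J(v,(v-K)_+)\ge\mathcal{E}_J((v-K)_+,(v-K)_+)\ge 0$, this forces $(v-K)_+\equiv 0$; symmetrically $v\ge -K$, so $\|v^{1/m}\|_\infty\le\|g\|_\infty$. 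For the $L^1$ estimate, pick a smooth odd approximation $H_\epsilon$ of $\operatorname{sgn}$ with $H_\epsilon'\ge 0$ and range in $[-1,1]$, and test with $H_\epsilon(v)$: the Dirichlet-form term is nonnegative by monotonicity of $H_\epsilon$, leaving $\int v^{1/m}H_\epsilon(v)\,d\mu\le\int g\,H_\epsilon(v)\,d\mu$, and letting $\epsilon\to 0$ by dominated convergence gives $\|v^{1/m}\|_1\le\|g\|_1$.

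The $T$-contraction (3.3) follows the same template. Writing the weak equations for $v,\tilde v$ associated to $g,\tilde g$, subtracting, and testing with $\zeta = H_\epsilon(v-\tilde v)$, I would use that $\beta(s):=|s|^{1/m-1}s$ is strictly increasing, so $v-\tilde v$ and $\beta(v)-\beta(\tilde v)$ have the same sign pointwise and $(\beta(v)-\beta(\tilde v))H_\epsilon(v-\tilde v)\to(\beta(v)-\beta(\tilde v))_+$ pointwise. The Dirichlet-form contribution $\mathcal{E}_J(v-\tilde v,H_\epsilon(v-\tilde v))$ is nonnegative by the monotonicity of $H_\epsilon$, the right-hand side is bounded by $\int(g-\tilde g)_+\,d\mu$, and dominated convergence on the middle term delivers (3.3).

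The main technical obstacle I expect is justifying that the truncation- and composition-type test functions $(v-K)_+$, $H_\epsilon(v)$, $H_\epsilon(v-\tilde v)$ genuinely lie in $\dot{\mathcal{H}}_{\mathcal{L}}(\mathbb{H}^n)$ and can be substituted into (3.2). This relies on the Markov contraction property of the nonlocal Dirichlet form $\mathcal{E}_J$ together with a density argument from $C^\infty_c(\mathbb{H}^n)$, and on handling the sign-changing nonlinearity $v^{1/m}=|v|^{1/m-1}v$ carefully so that contributions at infinity vanish. Once these approximation issues are settled, the structural computations above yield the weak formulation and all three stated estimates.
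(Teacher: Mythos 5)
Your plan follows the same skeleton as the paper's proof: obtain $v$ by minimizing the strictly convex functional $\tfrac{1}{2}\mathcal{E}_J(\cdot,\cdot)+\tfrac{m}{m+1}\int|\cdot|^{(m+1)/m}-\int g\,\cdot$ via the direct method, read off the weak formulation from the Euler--Lagrange equation, and derive the three estimates by inserting monotone approximations of the sign function and exploiting the Markov/Stroock--Varopoulos monotonicity $\mathcal{E}_J(w,\phi(w))\ge 0$ for nondecreasing $\phi$. Two of your technical choices differ harmlessly from the paper's: you get coercivity by Young's inequality absorbing $\int gw$ into the $L^{(m+1)/m}$ term (needing $g\in L^{m+1}$ by interpolation), where the paper absorbs it into the Dirichlet form via the fractional Sobolev/HLS inequality ($g\in L^{2Q/(Q+\alpha)}$, again by interpolation); and you obtain $\|v^{1/m}\|_\infty\le\|g\|_\infty$ by Stampacchia truncation with $(v-K)_+$, $K=\|g\|_\infty^m$, whereas the paper compares against the constant supersolution $\|g\|_\infty^m$. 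Both alternatives are standard and give the same result.

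One step, however, does not work as written. You introduce $H_\epsilon$ as an \emph{odd} approximation of $\operatorname{sgn}$ with range in $[-1,1]$, and then for the T-contraction claim both that $(\beta(v)-\beta(\tilde v))H_\epsilon(v-\tilde v)\to(\beta(v)-\beta(\tilde v))_+$ and that the right-hand side is bounded by $\int(g-\tilde g)_+\,d\mu$. Neither holds with an odd $H_\epsilon$: on $\{v<\tilde v\}$ the product tends to $-(\beta(v)-\beta(\tilde v))>0$, so the pointwise limit is $|\beta(v)-\beta(\tilde v)|$, and the right-hand side can only be controlled by $\int|g-\tilde g|$, yielding the two-sided $L^1$ contraction rather than (3.3). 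To get the stated one-sided T-contraction you must test with a nonnegative, nondecreasing $p_n$ approximating $\mathbbm{1}_{\{s>0\}}$ with $0\le p_n\le 1$, exactly as the paper does. Then $\mathcal{E}_J(v-\tilde v,p_n(v-\tilde v))\ge 0$ still holds, $\int(g-\tilde g)p_n(v-\tilde v)\le\int(g-\tilde g)_+$, and since $\beta(v)-\beta(\tilde v)$ has the same sign as $v-\tilde v$ the left-hand side tends to $\int(\beta(v)-\beta(\tilde v))_+$, giving (3.3). The odd $H_\epsilon$ is fine for $\|v^{1/m}\|_1\le\|g\|_1$ (though there you should invoke Fatou rather than dominated convergence, since $v^{1/m}\in L^1$ is not known a priori), and indeed that $L^1$ bound also follows directly from (3.3) by taking $\tilde g=0$, $\tilde v=0$.
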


\begin{proof}
    The weak solution is the minimizer of the following convex functional
    $$J(v)=\frac{1}{2}\mathcal{E}_J(v,v)+\int_{\mathbb{H}^n}\frac{m}{m+1}|v|^{\frac{1}{m}+1}-\int _{\mathbb{H}^n}v g.$$
    for each $v\in \dot{\mathcal{H}}_{\mathcal{L}}(\mathbb{H}^n)$.
    Since
    \begin{equation*}
        \begin{aligned}
            |\int _{\mathbb{H}^n}v g|&\leq \epsilon \norm{v}_{\frac{2Q}{Q-\alpha}}^2+C(\epsilon)\norm{g}_{\frac{2Q}{Q+\alpha}}^2\\
            &\leq C(Q,\alpha)\epsilon \norm{v}_{\dot{H}^{\alpha/2}_0}^2+C(\epsilon)\norm{g}_{\frac{2Q}{Q+\alpha}}^2\\
            &\leq C(Q,\alpha,\Lambda)\epsilon \mathcal{E}_J(v,v)+C({\epsilon})\norm{g}_{\frac{2Q}{Q+\alpha}}^2.
        \end{aligned}
    \end{equation*}
    Therefore, $J(v)$ is coercive, convex and lower semi-continuous and there exists a unique weak solution $v$ to equation (3.1). Let $v$ and $\Tilde{v}$ be two weak solutions of equation (3.1) corresponding to the inhomogeneous terms $g$ and $\Tilde{g}$. We use $p_n(v-\Tilde{v})$ as test functions in the weak formulation. Here $p_n$ is the smooth approximation of sign function with $0\leq p_n(\cdot)\leq 1$ and $p_n'(\cdot)\geq 0$. We obtain
    \begin{equation}
        \begin{aligned}
            \int_{\mathbb{H}^n}(v^{1/m}-\Tilde{v}^{1/m})p_n(v-\Tilde{v})+\mathcal{E}_J(v-\Tilde{v},p_n(v-\Tilde{v}))=\int_{\mathbb{H}^n}(g-\Tilde{g})p_n(v-\Tilde{v}).\nonumber
        \end{aligned}
    \end{equation}
    Passing to the limit, 
    \begin{equation}
        \begin{aligned}
            \int_{\mathbb{H}^n}(v^{1/m}-\Tilde{v}^{1/m})_+\leq \int_{\mathbb{H}^n}(g-\Tilde{g})\text{sign}(v-\Tilde{v})\leq \int_{\mathbb{H}^n}(g-\Tilde{g})_+.\nonumber
        \end{aligned}
    \end{equation}
    Here we apply the Stroock-Varopoulos inequality to obtain
    \begin{equation}
        \begin{aligned}
            \mathcal{E}_J(v-\Tilde{v},p_n(v-\Tilde{v}))\geq 0.\nonumber
        \end{aligned}
    \end{equation}
    since $p_n'(\cdot)\geq 0$. Furthermore, $v=\norm{g}_{\infty}^{m}$ is a supersolution to equation (3.1), we get
    \begin{equation}
        \begin{aligned}
            \norm{v^{1/m}}_{\infty}\leq \norm{g}_{\infty}.\nonumber
        \end{aligned}
    \end{equation}
    Choosing $v$ as the test function, we obtain
    \begin{equation}
        \begin{aligned}
            \mathcal{E}_J(v,v)\leq \int _{\mathbb{H}^n}g v \leq C.
        \end{aligned}
    \end{equation}
\end{proof}

\noindent Equivalently, define $v=u^m$, we obtain
\begin{proposition}
    For every $g\in L^1(\mathbb{H}^n)\cap L^{\infty}(\mathbb{H}^n)$, there exists a unique weak solution $u^m\in \dot{\mathcal{H}}_{\mathcal{L}}(\mathbb{H}^n)$ to the equation $u+\mathcal{L}u^m=g$ in $\mathbb{H}^n$, such that the following weak formulation holds,
    \begin{equation}
        \begin{aligned}
            \mathcal{E}_J(u^m,\zeta)+\int_{\mathbb{H}^n}u\zeta-\int_{\mathbb{H}^n}g\zeta=0, ~~\text{for}~\text{every}~\zeta\in C_0^\infty(\mathbb{H}^n).
        \end{aligned}
    \end{equation}
    Furthermore, $\norm{u}_\infty\leq \norm{g}_\infty$ and $\norm{u}_1\leq \norm{g}_1$. If $v$ and $\Tilde{v}$ are two solutions corresponding to $g$ and $\Tilde{g}$ respectively, the following T-contraction inequality holds,
    \begin{equation}
        \begin{aligned}
            \int _{\mathbb{H}^n}[u(x)-\Tilde{u}(x)]_+\diff \mu(x)\leq \int _{\mathbb{H}^n}[g(x)-\Tilde{g}(x)]_+\diff \mu(x).
        \end{aligned}
    \end{equation}
\end{proposition}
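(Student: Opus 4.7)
The plan is to deduce Proposition~2 directly from Proposition~1 via the monotone change of variables $v = u^m := |u|^{m-1}u$, with inverse $u = v^{1/m} := |v|^{1/m - 1}v$. Since $t \mapsto |t|^{m-1} t$ is a strictly increasing, sign-preserving, odd bijection of $\mathbb{R}$ for $m > 0$, this substitution transforms the equation $u + \mathcal{L} u^m = g$ into the equation $v^{1/m} + \mathcal{L} v = g$ studied in Proposition~1, and it preserves the ambient function space: the condition $u^m \in \dot{\mathcal{H}}_\mathcal{L}(\mathbb{H}^n)$ is literally $v \in \dot{\mathcal{H}}_\mathcal{L}(\mathbb{H}^n)$.

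First I would apply Proposition~1 to the given $g \in L^1(\mathbb{H}^n) \cap L^\infty(\mathbb{H}^n)$ to obtain a unique $v \in \dot{\mathcal{H}}_\mathcal{L}(\mathbb{H}^n)$ satisfying the weak formulation (3.2). Setting $u := v^{1/m}$, the identity $v = u^m$ shows that (3.2) rewrites as (2.6) term by term, with the same test functions $\zeta \in C_0^\infty(\mathbb{H}^n)$. Uniqueness of $u$ then follows from uniqueness of $v$ combined with the fact that $t \mapsto t^{1/m}$ is a bijection, so any other $u'$ solving (2.6) would give $v' = (u')^m$ solving (3.2), forcing $v' = v$ and hence $u' = u$.

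Next I would transcribe the remaining conclusions. The bounds $\|v^{1/m}\|_\infty \leq \|g\|_\infty$ and $\|v^{1/m}\|_1 \leq \|g\|_1$ from Proposition~1 become $\|u\|_\infty \leq \|g\|_\infty$ and $\|u\|_1 \leq \|g\|_1$ by the definition of $u$. For the T-contraction (2.7), if $u, \tilde{u}$ are the solutions associated with $g, \tilde{g}$ and $v := u^m$, $\tilde{v} := \tilde{u}^m$ are the corresponding Proposition~1 solutions, then $u - \tilde{u} = v^{1/m} - \tilde{v}^{1/m}$ pointwise, so (3.3) yields (2.7) immediately.

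There is no substantial obstacle here: Proposition~2 is essentially a reformulation of Proposition~1 in the variable $u$ rather than $v$. The only point requiring care is compatibility for sign-changing functions, which is handled by the convention $u^m := |u|^{m-1} u$ and $v^{1/m} := |v|^{1/m-1} v$ adopted in Section~2—these give mutually inverse, sign-preserving maps on $\mathbb{R}$, so the substitution is legitimate globally and not merely on $\{u \geq 0\}$.
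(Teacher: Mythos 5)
Your argument is correct and is precisely the paper's route: the paper introduces Proposition~2 with the phrase ``Equivalently, define $v=u^m$, we obtain,'' i.e.\ it deduces Proposition~2 from Proposition~1 by the same monotone substitution $v=u^m$, $u=v^{1/m}$ that you spell out. You have merely made explicit the term-by-term transcription and the sign-changing convention that the paper leaves implicit.
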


From the above Proposition, the operator $\mathcal{L}$ is accretive and satisfies the rank condition required in the Crandall-Liggett's theorem. We apply this theorem to equation (1.1) to obtain the so-called mild solution. Moreover, it is a weak solution. 

\begin{proposition}
    For each $u_0\in L^1(\mathbb{H}^n)\cap L^{\infty}(\mathbb{H}^n)$, there exists a weak solution $u\in C([0,\infty):L^1(\mathbb{H}^n))$ to equation (1.1). Moreover, $\norm{u}_\infty\leq \norm{u_0}_\infty$ and $\norm{u}_1\leq \norm{u_0}_1$. In addition, if $u$ and $\Tilde{u}$ are two such solutions corresponding to $u_0$ and $\Tilde{u_0}$, the following T-contraction inequality holds, 
    \begin{equation}
        \begin{aligned}
            \int _{\mathbb{H}^n}[v^{1/m}(x)-\Tilde{v}^{1/m}(x)]_+\diff \mu(x)\leq \int _{\mathbb{H}^n}[u_0(x)-\Tilde{u}_0(x)]_+\diff \mu(x).
        \end{aligned}
    \end{equation}
\end{proposition}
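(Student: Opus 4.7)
The plan is to combine the resolvent theory from Proposition 2 with the Crandall--Liggett generation theorem \autocite{gos_cm} to produce a mild solution, and then upgrade this mild solution to a weak solution via uniform energy estimates. Fix $T>0$ and a partition of $[0,T]$ into $N$ steps of length $h=T/N$, and define the implicit Euler iterates $u_0^h:=u_0$ and $u_k^h$ as the unique solution of $u_k^h+h\,\mathcal{L}((u_k^h)^m)=u_{k-1}^h$. Each step is well-posed by Proposition 2: its proof (minimizing the convex functional $J$) applies verbatim with $\mathcal{E}_J$ replaced by $h\,\mathcal{E}_J$, and yields that the resolvent $(I+hA)^{-1}$, with $Au:=\mathcal{L}(u^m)$, is a non-expansion on $L^1$ as well as an order-contraction. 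Hence by induction $\|u_k^h\|_\infty\le\|u_0\|_\infty$, $\|u_k^h\|_1\le\|u_0\|_1$, and the T-contraction bound is preserved at every step.

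Next I pass to the semigroup limit. Define the piecewise constant interpolant $u_h(t):=u_k^h$ for $t\in[(k-1)h,kh)$. Since $A$ satisfies the range condition and $L^1$-accretivity on the dense subset $L^1\cap L^\infty$, the Crandall--Liggett theorem provides a unique limit $u\in C([0,\infty);L^1(\mathbb{H}^n))$ with $u_h\to u$ uniformly on compacts in time in the $L^1$ norm, independently of the partition. Passing to the limit $h\to 0$ in the discrete identities transfers the bounds $\|u\|_\infty\le\|u_0\|_\infty$, $\|u\|_1\le\|u_0\|_1$ and the T-contraction inequality to the continuous solution.

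To upgrade the mild solution to a weak one, I use the weak form of the discrete step,
\begin{equation*}
\int_{\mathbb{H}^n}(u_k^h-u_{k-1}^h)\,\zeta+h\,\mathcal{E}_J((u_k^h)^m,\zeta)=0,
\end{equation*}
and test with $\zeta=(u_k^h)^m$. Summation over $k$, telescoping, and Young's inequality give the uniform energy bound
\begin{equation*}
\sum_{k=1}^N h\,\mathcal{E}_J((u_k^h)^m,(u_k^h)^m)\le\frac{1}{m+1}\|u_0\|_{m+1}^{m+1},
\end{equation*}
so that $u_h^m$ is bounded in $L^2([0,T];\dot{\mathcal{H}}_{\mathcal{L}}(\mathbb{H}^n))$ independently of $h$. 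The $L^1$ convergence from Crandall--Liggett combined with the uniform $L^\infty$ bound yields a.e. convergence along a subsequence, hence $u_h^m\to u^m$ in $L^p_{\mathrm{loc}}$ for every $p<\infty$ by dominated convergence; in particular the weak limit of $u_h^m$ in $L^2_{\mathrm{loc}}((0,\infty);\dot{\mathcal{H}}_{\mathcal{L}})$ is identified as $u^m$. Testing the discrete weak equation against $\zeta\in C_c^2(\mathbb{H}^n\times(0,\infty))$, a discrete integration by parts in time
\begin{equation*}
\sum_k\int_{\mathbb{H}^n}(u_k^h-u_{k-1}^h)\zeta(\cdot,t_k)=-\sum_k\int_{\mathbb{H}^n}u_k^h\bigl(\zeta(\cdot,t_{k+1})-\zeta(\cdot,t_k)\bigr),
\end{equation*}
then yields in the limit $\int_0^\infty\!\int_{\mathbb{H}^n}u\,\partial_t\zeta-\int_0^\infty\mathcal{E}_J(u^m,\zeta)=0$, which is the weak formulation in Definition 1.

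The main obstacle is the passage to the limit in the nonlinear Dirichlet-form term. The Crandall--Liggett scheme only supplies strong $L^1$ convergence of $u_h$, whereas the weak formulation requires controlling $\mathcal{E}_J(u_h^m,\zeta)$, an $H^{\alpha/2}$-type object applied to the nonlinear composition $s\mapsto|s|^{m-1}s$. The uniform $L^\infty$ bound $\|u_h\|_\infty\le\|u_0\|_\infty$ is crucial here: it traps the convergence on a set where the nonlinearity is uniformly continuous, allowing one to combine a.e. convergence with the uniform energy estimate to identify the weak limit, and thereby close the proof.
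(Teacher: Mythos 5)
Your proposal is correct and takes essentially the same route as the paper: implicit time discretization well-posed via Proposition 2, the Crandall--Liggett theorem to produce the mild solution in $C([0,\infty);L^1)$ with the $L^1$, $L^\infty$, and T-contraction bounds inherited from the resolvent step, and then the uniform energy estimate obtained by testing the discrete step with $(u_k^h)^m$ to upgrade to a weak solution. Your write-up just fills in, in more detail, the limit passage in the nonlinear Dirichlet-form term (a.e. convergence plus the uniform $L^\infty$ bound to identify the weak limit of $u_h^m$, and the discrete integration by parts in time) which the paper simply references to \autocite{agf_dpa}.
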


\begin{proof}
    For each $T>0$, we divide interval $[0,T]$ into $N$ subintervals. Let $\epsilon=T/N$, we construct the piecewise constant function $u_{\epsilon,k}$ in each interval $(t_{k-1},t_k]$, where $t_k=k\epsilon$ ($k=1,2,\cdots,N$) as the solutions to the following elliptic equation,  
    \begin{equation}
        \begin{aligned}
            \mathcal{L}u_{\epsilon,k}^m+\frac{1}{\epsilon}(u_{\epsilon,k}-u_{\epsilon,k-1})=0.
        \end{aligned}
    \end{equation}
    with $u_{\epsilon,0}=u_0$. By Crandall-Liggett theorem, $u_\epsilon$ converges in $L^1(\mathbb{H}^n)$ to some function $u\in C([0,\infty):L^1(\mathbb{H}^n))$. Furthermore, $\norm{u_\epsilon}_\infty\leq \norm{u_0}_\infty$ inherited from the elliptic equations. And $u_\epsilon$ converges in weak$^*$ topology to $u\in L^\infty(\mathbb{H}^n\times[0,\infty))$. Here $u$ is the mild solution to Cauchy problem (1.1). Multiplying the equation (3.8) by $u_{\epsilon,k}^m$ and applying Young's inequality, we get
    \begin{equation}
        \begin{aligned}
            \int _0^T\mathcal{E}_J(u_\epsilon^m, u_\epsilon^m)\diff t\leq \frac{1}{m+1}\int_{\mathbb{H}^n} |u_0|^{m+1}\diff \mu(x).\nonumber
        \end{aligned}
    \end{equation}
    Passing to the limit, we obtain
    \begin{equation}
        \begin{aligned}
            \int _0^T\mathcal{E}_J(u^m, u^m)\diff t\leq \frac{1}{m+1}\int_{\mathbb{H}^n} |u_0|^{m+1}\diff \mu(x).
        \end{aligned}
    \end{equation}
    Passing to the limit, we obtain the parabolic weak formulation, see the analogues in \autocite{agf_dpa}.
\end{proof}

\subsection{Uniqueness of Solutions}

In this section, we prove the uniqueness of the constructed solutions. If $m > m^*$, the weak solution defined in section 2 is always unique. However, if $m\leq m^*$, we need a stronger assumption of solutions to guarantee the uniqueness.

\begin{proposition}
    Let $u_0\in L^1(\mathbb{H}^n)$ and $m>m^*$, Problem (1.1) has at most one weak solution. 
\end{proposition}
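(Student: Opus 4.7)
The plan is to adapt the classical Oleinik test function argument, which is standard for porous medium equations and has been employed in the fractional setting on $\mathbb{R}^n$ in \autocites{agf_dpa}{nfe_dpa}. Suppose $u_1, u_2$ are two weak solutions of (1.1) with the same initial datum $u_0\in L^1(\mathbb{H}^n)$, and set $w=u_1-u_2$, $W=u_1^m-u_2^m$. For fixed $T>0$, I introduce the Oleinik anti-derivative
$$\zeta(x,t)=\begin{cases}\displaystyle\int_t^T W(x,s)\,ds, & 0\leq t\leq T,\\ 0, & t>T.\end{cases}$$
Formally $\zeta(\cdot,T)=0$, $\partial_t\zeta=-W\,\mathbbm{1}_{[0,T]}$, and $\zeta(\cdot,t)\in \dot{\mathcal{H}}_{\mathcal{L}}(\mathbb{H}^n)$ for a.e. $t\in[0,T]$ since $u_i^m\in L^2_{\mathrm{loc}}((0,\infty):\dot{\mathcal{H}}_{\mathcal{L}})$.

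Next I would plug $\zeta$ into the weak formulation of the equation satisfied by $w$. Using $W=-\partial_t\zeta$ on $(0,T)$ together with the bilinearity of $\mathcal{E}_J$ and the identity $\mathcal{E}_J(\partial_t\zeta,\zeta)=\tfrac{1}{2}\partial_t\mathcal{E}_J(\zeta,\zeta)$, this produces, after the $\zeta(\cdot,T)=0$ boundary term is discarded,
$$\int_0^T\!\!\int_{\mathbb{H}^n}(u_1-u_2)(u_1^m-u_2^m)\,d\mu\,dt+\tfrac{1}{2}\mathcal{E}_J(\zeta(\cdot,0),\zeta(\cdot,0))=0.$$
Both terms on the left are nonnegative ($\mathcal{E}_J\geq 0$ and $(a-b)(a^m-b^m)\geq 0$ for $m>0$), so both must vanish. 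This forces $u_1=u_2$ a.e.\ on $\mathbb{H}^n\times(0,T)$, and since $T$ is arbitrary, $u_1\equiv u_2$.

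The substantive obstacle is making this substitution rigorous, because $\zeta$ does not lie in $C^2_c(\mathbb{H}^n\times(0,\infty))$. I would regularize $\zeta$ by (i) Steklov averages in time, which yield the required $C^1$ smoothness in $t$, and (ii) a spatial cutoff $\chi_R$ supported in the Korányi ball $B_{2R}$ and equal to one on $B_R$. The nonlocal form $\mathcal{E}_J$ then produces commutator-type error terms of the schematic form
$$\iint_{\mathbb{H}^n\times\mathbb{H}^n}|W(x)-W(y)|\,|\zeta(x)|\,|\chi_R(x)-\chi_R(y)|\,J(x,y)\,d\mu(x)\,d\mu(y),$$
which must be shown to vanish as $R\to\infty$. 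This is exactly where the hypothesis $m>m^*=(Q-\alpha)/Q$ enters: combining $u_i^m\in L^2_{\mathrm{loc}}(\dot{\mathcal{H}}_{\mathcal{L}})$ with the Hardy--Littlewood--Sobolev embedding $\dot{\mathcal{H}}_{\mathcal{L}}\hookrightarrow L^{2Q/(Q-\alpha)}$ of Lemma 4 and the continuity $u_i\in C([0,\infty):L^1)$, Hölder interpolation with an exponent dictated by $m>m^*$ yields the spatial decay of $W$ (and hence $\zeta$) needed to dominate the tails uniformly in $t$.

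Once these tail estimates are in hand, I would pass to the limit first in the Steklov parameter (absorbed by the already regular form $\mathcal{E}_J$) and then in $R$ to recover the displayed equality; the elementary sign argument above then closes the proof. The main difficulty, therefore, is not the algebra of the test function identity but the justification of the approximation scheme, which hinges on the $m>m^*$ threshold for the requisite Sobolev/Hölder control.
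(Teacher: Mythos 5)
Your proposal takes essentially the same route as the paper: the Oleinik anti-derivative $\zeta(x,t)=\int_t^T(u_1^m-u_2^m)\,ds$ as test function, yielding the identity $\int_0^T\!\int(u_1-u_2)(u_1^m-u_2^m)\,d\mu\,dt+\tfrac12\mathcal{E}_J(\zeta(\cdot,0),\zeta(\cdot,0))=0$ with both terms nonnegative. The paper packages the role of $m>m^*$ slightly more directly — it first proves $u\in L^{m+1}(\mathbb{H}^n\times(0,T))$ by interpolating the $L^1$ bound against $\|u^m\|_{2Q/(Q-\alpha)}$ from the Sobolev embedding, so the first integral is manifestly finite — whereas you reach the same ingredients but frame them as controlling spatial-cutoff commutator tails; the underlying estimates are the same.
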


\begin{proof}
    Following the analogues in \cite{agf_dpa}, we claim that 
    \begin{equation}
        \begin{aligned}
            u\in L^{m+1}(\mathbb{H}^n\times(0,T)) ~\text{if} ~m>m^*.\nonumber 
        \end{aligned}
    \end{equation}
    By inequalites (2,3), (2.4) and Hölder's inequality, we get
    \begin{equation}
        \begin{aligned}
            \nonumber \int _0^T\int_{\mathbb{H}^n}|u|^{m+1}\diff \mu(x)\diff t&\leq \int_0^T (\int _{\mathbb{H}^n}|u|\diff \mu(x))^{\beta}((\int_{\mathbb{H}^n}|u|^{\frac{2 Q m}{Q-\alpha}}\diff \mu(x))^{1-\beta})\diff t\\
            &\leq C(T)\max_{t\in[0,T]}\norm{u(\cdot,t)}_1^\beta [\int _0^T\norm{u^m}^2_{\frac{2Q}{Q-\alpha}}\diff t]^{1-\gamma}\\
            &\leq C(T)[\int_0^T\mathcal{E}_J(u^m,u^m)\diff t]^{1-\gamma}
        \end{aligned}
    \end{equation}
    where $\beta=\frac{Q(m-1)+\alpha(m+1)}{Q(2m-1)+\alpha}$ and $\gamma=\frac{Q(m-1)+\alpha}{Q(2m-1)+\alpha}$. 

    Define $\psi (x,t)=\int _t^T(u^m-\Tilde{u}^m)(x,s)\diff s $ for $0\leq t\leq T$ and $\psi=0$ for $t\geq T$. Let $u$ and $\Tilde{u}$ be two weak solutions to equation (1.1) with $u(\cdot,0)=\Tilde{u}(\cdot,0)=u_0(\cdot)$. From the parabolic weak formulation, we have
    \begin{equation}
        \begin{aligned}
            \int_0^T\int_{\mathbb{H}^n}&(u-\Tilde{u})(x,t)(u^m-\Tilde{u}^m)(x,t)\diff\mu(x)\diff t\\
            &+\frac{1}{2}\mathcal{E}_J(\int_0^T(u^m-\Tilde{u}^m)(x,s)\diff s,\int_0^T(u^m-\Tilde{u}^m)(x,s)\diff s)=0.\nonumber
        \end{aligned}
    \end{equation}
    Therefore, we prove $u=\Tilde{u}$ if they have the same initial data $u_0$. 
\end{proof}

Considering the case of $m\leq m^*$, we prove the uniqueness of the weak solutions which is strong in the sense;
\begin{equation}
    \begin{aligned}
        \partial _t u\in L^{\infty}((\tau,\infty):L^1(\mathbb{H}^n)),\rm{  ~~for ~every ~} \tau>0.
    \end{aligned}
\end{equation}

We prove the following comparison theorem as \autocite{afp_dpa} to obtain the uniqueness of the weak solutions satisfying (3.10), which are called strong solutions throughout this paper. 

\begin{proposition}
    Let $m>0$, if $u_1$ and $u_2$ are strong solutions to Problem (1.1) with initial data $u_{0,1}, ~u_{0,2}\in L^1(\mathbb{H}^n)$, then for $0\leq t_1\leq t_2$, we have
    \begin{equation}
        \begin{aligned}
            \int _{\mathbb{H}^n}(u_1-u_2)_+(x,t_2)\diff \mu(x)\leq \int _{\mathbb{H}^n}(u_{0,1}-u_{0,2})_+(x,t_1)\diff \mu(x)
        \end{aligned}
    \end{equation}
\end{proposition}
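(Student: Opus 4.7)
My plan is to exploit the strong-solution regularity, which gives $\partial_t u_i + \mathcal{L}u_i^m = 0$ pointwise a.e.\ with $\partial_t u_i$, and hence $\mathcal{L}u_i^m$, in $L^\infty((\tau,\infty);L^1(\mathbb{H}^n))$ for every $\tau > 0$. Setting $w := u_1 - u_2$ and $f := u_1^m - u_2^m$, this yields $\partial_t w + \mathcal{L}f = 0$ a.e.\ with $\partial_t w,\,\mathcal{L}f\in L^1_x$ uniformly on $[\tau,T]$. Because $s\mapsto |s|^{m-1}s$ is strictly monotone, $\{f > 0\} = \{w > 0\}$. I will test the equation against $p_\epsilon(f)$, where $p_\epsilon:\mathbb R\to[0,1]$ is smooth, non-decreasing, with $p_\epsilon\equiv 0$ on $(-\infty,0]$ and $p_\epsilon\equiv 1$ on $[\epsilon,\infty)$, and then send $\epsilon\to 0$ to recover $\mathbf{1}_{\{w>0\}}$.

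Integrating the identity over $\mathbb{H}^n\times[t_1,t_2]$ gives
\begin{equation*}
\int_{t_1}^{t_2}\!\int_{\mathbb{H}^n}\partial_t w\cdot p_\epsilon(f)\,\diff\mu\,\diff t + \int_{t_1}^{t_2}\!\int_{\mathbb{H}^n}\mathcal{L}f\cdot p_\epsilon(f)\,\diff\mu\,\diff t = 0.
\end{equation*}
Using the Dirichlet-form identity of Section 2.2, the second integral equals $\int_{t_1}^{t_2}\mathcal{E}_J(f,p_\epsilon(f))\,\diff t$. The crucial sign property is that $(f(x)-f(y))(p_\epsilon(f(x))-p_\epsilon(f(y)))\geq 0$ pointwise, since $p_\epsilon$ is non-decreasing, so $\mathcal{E}_J(f,p_\epsilon(f))\geq 0$. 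For the first integral, dominated convergence ($|p_\epsilon(f)|\leq 1$, $\partial_t w\in L^1_x$ uniformly in $t$) yields convergence as $\epsilon\to 0$ to $\int_{t_1}^{t_2}\int_{\mathbb H^n}\partial_t w\cdot\mathbf{1}_{\{w>0\}}\,\diff\mu\,\diff t$, which by the chain rule for the absolutely continuous $L^1$-valued curve $t\mapsto w(\cdot,t)$ equals $\int_{\mathbb H^n}w_+(\cdot,t_2)\,\diff\mu - \int_{\mathbb H^n}w_+(\cdot,t_1)\,\diff\mu$. Combining, $\int w_+(t_2)\,\diff\mu \leq \int w_+(t_1)\,\diff\mu$, and letting $t_1\downarrow 0$ while using $L^1$-continuity at $t = 0$ yields the stated inequality.

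The principal obstacle is rigorously justifying the Dirichlet-form identity $\int_{\mathbb H^n}\mathcal{L}f\cdot p_\epsilon(f)\,\diff\mu = \mathcal{E}_J(f,p_\epsilon(f))$ for the test function $p_\epsilon(f)$, which is not compactly supported and which is only a Lipschitz (rather than $C^2$) transformation of $f$. I plan to truncate via $\psi_R\in C^\infty_c(\mathbb{H}^n)$ increasing to $1$, apply the identity to $p_\epsilon(f)\psi_R$ (a legitimate Dirichlet-form test function, since $\dot{\mathcal{H}}_{\mathcal{L}}$ is stable under multiplication by compactly supported smooth functions and under Lipschitz compositions), and pass to the limit $R\to\infty$ using $\mathcal{L}f\in L^1_x$ together with $|p_\epsilon(f)|\leq 1$ for the pointwise integral, and Cauchy--Schwarz together with $\mathcal{E}_J(f,f)<\infty$ to control the Dirichlet-form error. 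The chain-rule identity $\partial_t(w_+)=\partial_t w\cdot\mathbf{1}_{\{w>0\}}$ is standard for the strong-solution regularity class.
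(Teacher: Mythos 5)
Your proof is correct and follows the same core strategy as the paper's: test the equation for the difference $u_1-u_2$ against a smooth, monotone approximation of the positive-part indicator, discard the Dirichlet-form term by positivity, and integrate the remaining time-derivative term via the chain rule for $w_+$. One genuine refinement in your version: you test against $p_\epsilon(u_1^m - u_2^m)$, whereas the paper tests against $p_n(u_1 - u_2)$. Your choice is actually cleaner. The paper's displayed identity $\int\int \partial_t(u_1-u_2)\,p_n(u_1-u_2) = \int \mathcal{E}_J(u_1-u_2,p_n(u_1-u_2))$ has the wrong sign and the wrong first argument (it should involve $u_1^m - u_2^m$, since that is what $\mathcal{L}$ acts on), and even after correcting it, $\mathcal{E}_J(u_1^m-u_2^m,\,p_n(u_1-u_2))\geq 0$ is not termwise obvious for a fixed $n$: $(u_1^m - u_2^m)(x) - (u_1^m - u_2^m)(y)$ need not have the same sign as $(u_1-u_2)(x) - (u_1-u_2)(y)$. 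By contrast $\mathcal{E}_J(f, p_\epsilon(f))\geq 0$ with $f=u_1^m-u_2^m$ is immediate from monotonicity of $p_\epsilon$, which is exactly the content of the paper's Stroock--Varopoulos lemma with $\Psi=0$; and the crucial identity $\{f>0\}=\{u_1>u_2\}$ from strict monotonicity of $s\mapsto|s|^{m-1}s$ closes the loop when passing $\epsilon\to 0$ in the time-derivative term. You also correctly flag the admissibility issue (non-compact support, only Lipschitz composition) and propose the right cutoff/truncation remedy, which the paper leaves implicit.
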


\begin{proof}
    Let $p_n$ be the smooth approximation of sign function with $0 \leq p_n(\cdot) \leq 1$ and $p_n'(\cdot)\geq 0$. Using $p_n(u_1 - u_2)$ as the test function in the parabolic weak formulation, for $0 < t_1 < t_2$, we have
    \begin{equation*}
        \begin{aligned}
            \int_{t_1}^{t_2}\int_{\mathbb{H}^n} \partial_t(u_1 - u_2) p_n(u_1 - u_2) \diff \mu(x) \diff t = \int_{t_1}^{t_2} \mathcal{E}_J(u_1 - u_2, p_n(u_1 - u_2))\diff t.
        \end{aligned}
    \end{equation*}
    Using Stroock-Varopoulos inequality, we have
    \begin{equation*}
        \begin{aligned}
            \int_{t_1}^{t_2} \mathcal{E}_J(u_1 - u_2, p_n(u_1 - u_2))\diff t \leq 0. 
        \end{aligned}
    \end{equation*}
    Hence, we obtain
    \begin{equation*}
        \begin{aligned}
            \int_{t_1}^{t_2}\int_{\mathbb{H}^n} \partial_t(u_1 - u_2) p_n(u_1 - u_2) \diff \mu(x) \diff t \leq 0
        \end{aligned}
    \end{equation*}
    Passing to the limit, we have
    \begin{equation*}
        \begin{aligned}
            \int_{\mathbb{H}^n}(u_1 - u_2)_+(x,t_2)\diff \mu(x) \leq \int_{\mathbb{H}^n}(u_1 - u_2)_+(x,t_1)\diff \mu(x)
        \end{aligned}
    \end{equation*}
    Let $t_1$ approach zero, we prove this inequality for $t_1=0$. 
\end{proof}

\section{Strong solutions}
In this section, we prove the constructed weak solutions are indeed strong solutions. First, we prove that $\partial_t u$ is a bounded radon measure. 

\begin{lemma}
    Assume $m\neq 1$, let $u$ be a weak solution to equation (1.1), then $\partial _t u $ is a bounded radon measure. 
\end{lemma}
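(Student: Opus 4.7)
The plan is to adapt the classical Bénilan--Crandall scaling argument, which combines the $L^1$-contraction property with the homogeneity of the equation. The key observation is that if $u$ is a weak solution of (1.1) with initial data $u_0$, then for every $\lambda > 0$ the rescaled function $v_\lambda(x,t) := \lambda u(x, \lambda^{m-1}t)$ is again a weak solution with initial data $\lambda u_0$. Indeed, both $\partial_t v_\lambda$ and $\mathcal{L}(|v_\lambda|^{m-1}v_\lambda)$ pick up a factor $\lambda^m$ under this transformation, so the equation is preserved; this can also be read off the weak formulation in Definition 1 by a change of variable $t \mapsto \lambda^{1-m}t$ in the time argument of the test function, using the bilinearity of $\mathcal{E}_J$.

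Applying the $L^1$-contraction inequality (3.7) to the pair $(u, v_\lambda)$ yields
\begin{equation*}
\|u(\cdot,t) - \lambda u(\cdot, \lambda^{m-1}t)\|_{L^1(\mathbb{H}^n)} \leq \|u_0 - \lambda u_0\|_{L^1(\mathbb{H}^n)} = |1-\lambda|\,\|u_0\|_{L^1(\mathbb{H}^n)}.
\end{equation*}
Setting $\lambda = 1 + h$ for small $h \neq 0$ and dividing by $|h|$ produces a uniform-in-$h$ bound in $L^1(\mathbb{H}^n)$ on the quotient $h^{-1}[u(\cdot,t) - (1+h)u(\cdot,(1+h)^{m-1}t)]$. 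A formal Taylor expansion gives
\begin{equation*}
\frac{u(x,t) - (1+h)u(x,(1+h)^{m-1}t)}{h} \longrightarrow -\,u(x,t) - (m-1)t\,\partial_t u(x,t),\quad h \to 0,
\end{equation*}
and pairing against $\zeta \in C^2_c(\mathbb{H}^n \times (\tau, T))$ and using Fubini together with the continuity $u \in C([0,\infty):L^1(\mathbb{H}^n))$ built into Definition 1 allows me to pass to the distributional limit, identifying $u + (m-1)t\,\partial_t u$ as a bounded Radon measure of total variation at most $\|u_0\|_1$ on $\mathbb{H}^n \times (\tau,T)$. Since $u \in L^\infty((0,\infty) : L^1(\mathbb{H}^n))$ with $\|u(\cdot,t)\|_1 \leq \|u_0\|_1$ and $(m-1)t$ is bounded and bounded away from $0$ on $[\tau,T]$, it follows that $\partial_t u$ itself is a bounded Radon measure on each strip $\mathbb{H}^n \times [\tau, T]$.

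The main obstacle will be the rigorous passage to the distributional limit, because $u$ need not be time-differentiable pointwise and is only a mild/weak solution. This is handled by applying the change of variable $t' = (1+h)^{m-1}t$ inside the pairing against the test function $\zeta$: the $L^1$-uniform bound is transferred to a bound on the integral
\begin{equation*}
\int_\tau^T \int_{\mathbb{H}^n} u(x,t)\,\Phi_h(x,t)\, d\mu(x)\, dt
\end{equation*}
against a family $\Phi_h$ of $C^2_c$ functions converging in $C^1$ to a fixed first-order differential expression in $\zeta$, and the continuity of the orbit $t \mapsto u(\cdot,t)$ in $L^1$ supplies the weak limit. The assumption $m \neq 1$ is essential: for $m = 1$ the time-rescaling $\lambda^{m-1} = 1$ is trivial, the difference quotient degenerates to a multiple of $u$ itself, and the argument produces no information on $\partial_t u$.
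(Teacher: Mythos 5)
Your proposal is correct and uses essentially the same Bénilan--Crandall argument as the paper: the scaling invariance $u \mapsto \lambda u(\cdot,\lambda^{m-1}\cdot)$ together with the $L^1$-contraction inequality (3.7) to produce a uniform difference-quotient bound. The paper's only difference is a choice of parametrization (fixing $t>0$ and taking $\lambda$ with $\lambda^{m-1}t = t+h$, so the contraction directly yields a Lipschitz-in-time estimate $\|u(\cdot,t+h)-u(\cdot,t)\|_1 \leq \frac{2\|u_0\|_1}{|m-1|t}(h+o(h))$) versus your $\lambda=1+h$, which produces the Bénilan--Crandall combination $-u-(m-1)t\,\partial_t u$ and requires one extra subtraction-and-division step at the end; both are cosmetically different routes to the same conclusion.
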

\begin{proof}
    If $u$ is a solution with initial data $u_0$ and $\lambda$ is a positive constant, then 
    $$\Tilde{u}(x,t)=\lambda u(x,\lambda^{m-1}t).$$
    is the weak solution with data $\Tilde{u}_0(x)=\lambda u_0(x)$. Now fix $t>0$ and $h>0$ and put $\lambda^{m-1}t=t+h$ so that $\lambda>1$. By $L^1$ contractivity estimate, we get
    \begin{equation}
        \begin{aligned}
            \norm{u(x,t+h)-u(x,t)}_1\leq 2\frac{\norm{u_0}_1}{(m-1)t}(h+o(h)).
        \end{aligned}
    \end{equation}
    Therefore, $\partial _t u$ is a finite radon measure. 
\end{proof}

First, we prove $\partial _t u^{\frac{m+1}{2}} \in L_{\text{loc}}^2(\mathbb{H}^n\times(0,\infty))$ by the method of Steklov averages. 

\begin{lemma}
    The function $z=u^{\frac{m+1}{2}}$ has the property that $\partial _t z \in L_{\text{loc}}^2(\mathbb{H}^n\times(0,\infty))$. 
\end{lemma}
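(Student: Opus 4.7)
The strategy is to make rigorous the following formal energy identity: multiplying the equation $\partial_t u + \mathcal{L} u^m = 0$ by $\partial_t u^m$ and integrating against a time cutoff $\eta^2(t) \in C_c^\infty((0,\infty))$ gives
\begin{equation*}
\int_0^\infty\!\!\int_{\mathbb{H}^n} (\partial_t u)(\partial_t u^m)\,\eta^2 \, d\mu\, dt = -\int_0^\infty \mathcal{E}_J(u^m, \partial_t u^m)\,\eta^2 \, dt = \frac{1}{2}\int_0^\infty \mathcal{E}_J(u^m,u^m)\,(\eta^2)' \, dt.
\end{equation*}
The left-hand integrand equals $m|u|^{m-1}(\partial_t u)^2 = \tfrac{4m}{(m+1)^2}\bigl(\partial_t u^{(m+1)/2}\bigr)^2$, while the right-hand side is finite since $u^m \in L^2_{\mathrm{loc}}((0,\infty);\dot{\mathcal{H}}_{\mathcal{L}}(\mathbb{H}^n))$ by the definition of weak solution. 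This gives the desired local $L^2$ bound on $\partial_t z$ with $z=u^{(m+1)/2}$.

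To justify the computation, I would introduce the Steklov averages
\begin{equation*}
[f]_h(x,t):=\frac{1}{h}\int_t^{t+h}f(x,s)\,ds, \qquad h>0,
\end{equation*}
applied to $u$ and $u^m$. From the weak formulation of (1.1), a standard argument (testing with $\zeta(x,t)=\phi(x)\chi_{(t,t+h)}(s)/h$ for $\phi\in C_c^\infty(\mathbb{H}^n)$) yields that for a.e.\ $t>0$, the averaged equation
\begin{equation*}
\partial_t [u]_h(\cdot,t) + \mathcal{L}\bigl([u^m]_h(\cdot,t)\bigr) = 0
\end{equation*}
holds in the dual of $\dot{\mathcal{H}}_{\mathcal{L}}(\mathbb{H}^n)$, with $\partial_t[u]_h = \frac{u(\cdot,t+h)-u(\cdot,t)}{h}$. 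Since $[u^m]_h(\cdot,t) \in \dot{\mathcal{H}}_{\mathcal{L}}(\mathbb{H}^n)$ for a.e.\ $t$, the quantity $\partial_t [u^m]_h$ is now an admissible tester.

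Next I would pair the averaged equation with $\partial_t[u^m]_h\cdot \eta^2(t)$ and integrate in $t$. Integration by parts in time on the Dirichlet-form piece, combined with the symmetry and bilinearity of $\mathcal{E}_J$, produces
\begin{equation*}
\int_0^\infty\!\!\int_{\mathbb{H}^n} \partial_t[u]_h\,\partial_t[u^m]_h\,\eta^2 \, d\mu\, dt = \frac{1}{2}\int_0^\infty \mathcal{E}_J([u^m]_h,[u^m]_h)\,(\eta^2)' \, dt,
\end{equation*}
i.e., the exact analogue of the formal identity but at the level of difference quotients. Now pass $h\to 0$: on the left, $\partial_t[u]_h \to \partial_t u$ and $\partial_t [u^m]_h \to \partial_t u^m$ in $L^1_{\mathrm{loc}}$ (using Lemma 1 for the pointwise time-derivative bound together with Lebesgue differentiation), so by Fatou's lemma the liminf dominates $\tfrac{4m}{(m+1)^2}\int\int (\partial_t u^{(m+1)/2})^2\,\eta^2\,d\mu\,dt$; on the right, $[u^m]_h \to u^m$ in $L^2_{\mathrm{loc}}((0,\infty);\dot{\mathcal{H}}_{\mathcal{L}})$, so the right side converges to a finite quantity controlled by $\|(\eta^2)'\|_\infty \int_{\mathrm{supp}\,\eta}\mathcal{E}_J(u^m,u^m)\,dt$. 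Since $\eta$ is arbitrary with compact support in $(0,\infty)$, this yields $\partial_t u^{(m+1)/2}\in L^2_{\mathrm{loc}}(\mathbb{H}^n\times(0,\infty))$.

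The main technical obstacle is the identification of the limit of $\partial_t[u]_h\cdot \partial_t[u^m]_h$: the product of two difference quotients must be shown to converge (at least weakly) to $m|u|^{m-1}(\partial_t u)^2$, which is delicate for sign-changing solutions and for $0<m<1$, where $|u|^{m-1}$ may be singular on the vanishing set. This is handled by writing $\partial_t[u]_h\cdot\partial_t[u^m]_h \geq 0$ (monotonicity of $s\mapsto |s|^{m-1}s$), applying Fatou on this nonnegative integrand together with the a.e.\ convergence coming from Lemma 1, so that only a one-sided bound is needed to conclude $\partial_t u^{(m+1)/2}\in L^2_{\mathrm{loc}}$.
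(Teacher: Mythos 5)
Your strategy is the same as the paper's up to the energy identity at the Steklov-average level: regularize via $[\,\cdot\,]_h$, use $\eta(t)\,\partial_t[u^m]_h$ as test function, integrate by parts in time to get $\int\int \partial_t[u]_h\,\partial_t[u^m]_h\,\eta^2 = \tfrac{1}{2}\int (\eta^2)'\,\mathcal{E}_J([u^m]_h,[u^m]_h)$, and observe the right side is uniformly bounded. So the framework is correct and matches the paper.

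The gap is in the final passage $h\to 0$. You propose to identify the limit of $\partial_t[u]_h\,\partial_t[u^m]_h$ using Fatou plus ``a.e.\ convergence coming from Lemma~1'' and plus the observation that the product is nonnegative by monotonicity of $s\mapsto|s|^{m-1}s$. Two things fail here. First, at this point in the argument we only know $\partial_t u$ is a bounded Radon measure (Lemma~5 of the paper); that does \emph{not} give a.e.\ pointwise convergence of the difference quotients $\delta^h u = \partial_t[u]_h$ (that would require absolute continuity), so the a.e.\ convergence you want to feed into Fatou is not available. Second, and more fundamentally, even granting a.e.\ convergence, nonnegativity of $\delta^h u\cdot\delta^h u^m$ is not enough: Fatou can only give you a lower bound on $\liminf\int \delta^h u\,\delta^h u^m$, and to turn that into an $L^2$ bound on $\partial_t z$ you need a \emph{quantitative} pointwise lower bound of the product by a multiple of $(\delta^h z)^2$ \emph{before} passing to the limit. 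The paper supplies precisely this: the elementary algebraic inequality
\begin{equation*}
(\delta^h u^m)(\delta^h u) \;\geq\; c\,(\delta^h u^{(m+1)/2})^2,
\end{equation*}
i.e.\ $(a^m-b^m)(a-b)\geq c\,(a^{(m+1)/2}-b^{(m+1)/2})^2$ for real $a,b$ with $|s|^{m-1}s$-type notation. Inserting this on the left-hand side of the energy identity gives a bound $\int_{t_1}^{t_2}\int_{\mathbb{H}^n}(\delta^h u^{(m+1)/2})^2\leq C$ \emph{uniform in $h$}. Then one concludes by weak $L^2$-compactness: a subsequence of $\delta^h u^{(m+1)/2}$ converges weakly in $L^2_{\rm loc}$, and since $u^{(m+1)/2}\in C([0,\infty):L^1)$ the difference quotients converge to $\partial_t u^{(m+1)/2}$ in the sense of distributions, so the weak limit must be $\partial_t z$ and hence $\partial_t z\in L^2_{\rm loc}$. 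No pointwise identification of the limit of the product, and no a.e.\ convergence of difference quotients, is required. You should replace the Fatou step by this algebraic inequality followed by weak compactness.
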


\begin{proof}
    Since $\partial _t u^m$ may not be a function, it's not a reasonable test function. The Steklov averages can be used to overcome the difficulties. For any $g\in L^1(\mathbb{H}^n\times (0,+\infty))$, we define the Steklov average
    \begin{equation}
        \begin{aligned}
            g^h(x,t)=\frac{1}{h}\int_{t}^{t+h}g(x,s)\diff s.\nonumber
        \end{aligned}
    \end{equation}
    We have
    \begin{equation}
        \begin{aligned}
            \delta^h g(x,t)=\partial _t g^h(x,t):=\frac{g(x,t+h)-g(x,t)}{h} ~~\text{almost}~\text{everywhere}.\nonumber
        \end{aligned}
    \end{equation}
    Since $\partial _t u^h \in L^1(\mathbb{H}^n\times (0,+\infty))$, we rewrite the weak formulation of equation (1.1) as
    \begin{equation}
        \begin{aligned}
            \int_0^{\infty}\int_{\mathbb{H}^n}\partial _t u^h \zeta\diff \mu(x)\diff t+\int_0^{\infty}\mathcal{E}_J((u^m)^h,\zeta)\diff t=0 ~~\text{for}~ \text{each}~ \zeta\in C^2_{c}(\mathbb{H}^n\times(0,\infty)).
        \end{aligned}
    \end{equation}
    Take $\zeta=\eta(t)\partial _t (u^m)^h$ as the admissible test function in the weak formulation (4.2), where $\eta(t)\in C_0^\infty((0,+\infty))$ is a cut-off function satisfying $0\leq \eta(t)\leq 1$, $\eta(t)=1$ for $t\in [t_1,t_2]$. We have
    \begin{equation}
        \begin{aligned}
            \int_0^\infty\int_{\mathbb{H}^n}\eta \partial _t u^h \partial _t (u^m)^h\diff \mu(x)\diff t &=-\frac{1}{2}\int_0^\infty \eta(t)\partial _t\mathcal{E}_J((u^m)^h,(u^m)^h)\diff t\\
            &=\frac{1}{2}\int_0^\infty \partial_t \eta(t)\mathcal{E}_J((u^m)^h,(u^m)^h)\diff t. \nonumber
        \end{aligned}
    \end{equation}
    Notice that $(\delta^h u^m)(\delta^h u)\geq c(\delta^h u^{(m+1)/2})^2$, see \autocite{afp_dpa}, we obtain
    \begin{equation}
        \begin{aligned}
            \int_{t_1}^{t_2}\int_{\mathbb{H}^n} (\delta^h u^{(m+1)/2})^2\diff \mu(x)\diff t\leq C.\nonumber
        \end{aligned}
    \end{equation}
    Therefore, $\partial_t u^{(m+1)/2}\in L^2_{\text{loc}}(\mathbb{H}^n\times(0,\infty))$.
\end{proof}

\begin{proposition}
    The weak solutions to equation (1.1) are indeed strong solutions. Moreover, $\norm{\partial _t u(\cdot, t)}_1\leq \frac{2}{|m-1|t}\norm{u_0}_1$, for $m\neq 1$ for all $\tau>0$. 
\end{proposition}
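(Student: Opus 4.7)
The plan is to establish two things at once: that the weak solution satisfies $\partial_t u \in L^\infty_{\mathrm{loc}}((0,\infty) : L^1(\mathbb{H}^n))$ (so it is strong in the sense of condition (i) of Theorems 1 and 3), and that the explicit Bénilan--Crandall constant $\frac{2}{|m-1|t}\|u_0\|_1$ bounds $\|\partial_t u(\cdot,t)\|_1$. Both will flow from upgrading Lemma 4.1 (which only tells us that $\partial_t u$ is a bounded Radon measure) into an $L^1$ statement, and then feeding the upgraded regularity back into the weak formulation.

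First I would observe that Lemma 4.1 already gives, for every $t > 0$ and $h > 0$, the finite-difference Lipschitz bound $\|u(\cdot,t+h)-u(\cdot,t)\|_1 \leq \frac{2\|u_0\|_1}{|m-1|t}(h+o(h))$. The case $m>1$ is done there with the scaling $\tilde u(x,s)=\lambda u(x,\lambda^{m-1}s)$ and $\lambda>1$; for $0<m<1$ the same argument works with $\lambda<1$ chosen so that $\lambda^{m-1}t=t+h$, using the $L^1$-contraction of Proposition 3.3 which is symmetric in the two initial data. Together this yields the uniform bound $\sup_{t\in[\tau,T]}\|\delta^h u(\cdot,t)\|_1 \leq \frac{2\|u_0\|_1}{|m-1|\tau}+o(1)$ for any $0<\tau<T$.

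Next I would promote the difference-quotient bound to an $L^1$-valued time derivative. Rather than invoking Radon--Nikodym in $L^1$ (which fails since $L^1$ lacks the RNP), I would go back to the Crandall--Liggett discretization used in Proposition 3.3: the approximations $u_\epsilon$ are piecewise-constant in time with discrete time increments $\frac{1}{\epsilon}(u_{\epsilon,k}-u_{\epsilon,k-1})=-\mathcal{L} u_{\epsilon,k}^m \in L^1(\mathbb{H}^n)$, and they inherit the same scaling estimate, so $\|\frac{1}{\epsilon}(u_{\epsilon,k}-u_{\epsilon,k-1})\|_1$ is bounded by $\frac{2\|u_0\|_1}{|m-1|t_k}$ uniformly in $\epsilon$. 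Passing to the limit $\epsilon\to 0$, the discrete derivatives converge distributionally to $\partial_t u$, and the uniform $L^1$-in-space, $L^\infty$-in-time bound is preserved under weak-$\ast$ limits tested against $C_c(\mathbb{H}^n\times(\tau,T))$. This identifies $\partial_t u$ with an element of $L^\infty((\tau,\infty):L^1(\mathbb{H}^n))$ with $\|\partial_t u(\cdot,t)\|_1 \leq \frac{2\|u_0\|_1}{|m-1|t}$ for a.e. $t>0$.

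Finally, with $\partial_t u \in L^\infty_{\mathrm{loc}}((0,\infty):L^1)$ in hand, and using $u^m \in L^2_{\mathrm{loc}}((0,\infty):\dot{\mathcal{H}}_{\mathcal{L}})$ from Definition 2.1, I would test the weak formulation with functions of the form $\zeta(x,t)=\phi(t)\eta(x)$ where $\phi\in C_c^\infty((0,\infty))$ and $\eta\in C_c^\infty(\mathbb{H}^n)$, and integrate by parts in $t$ using the newly-acquired $L^1$ time derivative. This yields
\[
\int_{\mathbb{H}^n}\partial_t u(\cdot,t)\,\eta\,\diff\mu + \mathcal{E}_J(u^m(\cdot,t),\eta)=0
\]
for a.e. $t>0$ and every $\eta\in C_c^\infty(\mathbb{H}^n)$, which by density is the strong form of the equation in $\dot{\mathcal{H}}_{\mathcal{L}}^*$. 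Combined with Lemma 4.2, which ensures compatibility of the energy identity with the Steklov-averaged equation, this shows that the weak solution is strong in the sense required.

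The main obstacle is the upgrade from a Radon-measure time derivative to an $L^1$-valued one: a direct appeal to Rademacher-type differentiation is unavailable in $L^1$, and the cleanest route is the one sketched above, namely tracking the $L^1$-boundedness through the Crandall--Liggett scheme where time derivatives are manifestly $L^1$ at each discrete step, and only then passing to the continuum limit.
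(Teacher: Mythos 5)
The gap is in the passage to the limit at the end of your second paragraph. You are right that $L^1$ lacks the Radon--Nikodym property, so the Lipschitz-in-time estimate from Lemma~5 does \emph{not} by itself give $\partial_t u \in L^1$. But routing the argument through the Crandall--Liggett scheme does not repair this: even granting that the discrete increments $\frac{1}{\epsilon}(u_{\epsilon,k}-u_{\epsilon,k-1})$ satisfy a uniform $L^1$ bound, a sequence bounded in $L^1$ and converging distributionally converges weak-$\ast$ in the space of Radon measures, not in $L^1$. To conclude that the limit is an $L^1$ function one would need uniform integrability (Dunford--Pettis), and nothing in your argument supplies it. The space $L^\infty((\tau,T):L^1(\mathbb{H}^n))$ is not a dual space, so there is no weak-$\ast$ compactness to invoke, and ``the bound is preserved under weak-$\ast$ limits tested against $C_c$'' only shows that $\partial_t u$ is a finite Radon measure with total variation controlled by $\frac{2\|u_0\|_1}{|m-1|t}$ --- which is precisely the content of Lemma~5, not an improvement on it. (There is also a secondary wrinkle: the discrete Bénilan--Crandall scaling estimate you assert for $\|\frac{1}{\epsilon}(u_{\epsilon,k}-u_{\epsilon,k-1})\|_1$ is not immediate, because $\lambda u_{\epsilon,k}$ solves the implicit scheme with a \emph{rescaled} time step $\epsilon/\lambda^{m-1}$, so the two discrete solutions live on different time grids and the discrete T-contraction does not directly compare them.)

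The missing ingredient is exactly what Lemma~6 feeds into: the Steklov-average argument shows $\partial_t u^{(m+1)/2}\in L^2_{\mathrm{loc}}(\mathbb{H}^n\times(0,\infty))$, and the paper then invokes a Bénilan--Gariepy-type theorem (Theorem~1.1 of \cite{ssi_bp}): if $\partial_t u$ is a Radon measure and $\partial_t u^{(m+1)/2}$ is an $L^2_{\mathrm{loc}}$ function, then the singular part of $\partial_t u$ must vanish, so $\partial_t u\in L^1_{\mathrm{loc}}$. Only after this upgrade does estimate~(4.1) give the quantitative bound in $L^\infty((\tau,\infty):L^1)$. You mention Lemma~6 only in passing as ensuring ``compatibility of the energy identity,'' but its real role is to supply the absolute continuity that neither the scaling estimate nor the discrete scheme can provide. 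Without appealing to this (or an equivalent device), the step from ``$\partial_t u$ is a bounded Radon measure'' to ``$\partial_t u\in L^1_{\mathrm{loc}}$'' is unjustified. For the linear case $m=1$, the paper sidesteps the issue because $(m+1)/2=1$, so Lemma~6 already gives $\partial_t u\in L^2_{\mathrm{loc}}\subset L^1_{\mathrm{loc}}$ directly.
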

\begin{proof}
    We begin with the case $m\neq 1$. From Lemma 5 and Lemma 6, $\partial _t u$ is a bounded radon measure and $\partial_t u^{(m+1)/2}\in L^2_{\text{loc}}(\mathbb{H}^n\times(0,\infty))$. Following Theorem 1.1 in \autocite{ssi_bp}, we obtain that $\partial _t u\in L^1_{\text{loc}}(\mathbb{H}^n\times(0,\infty))$. Furthermore, we get $\partial _t u\in L^{\infty}((\tau,\infty):L^1(\mathbb{H}^n))$ by estimate (4.1).
    
    If $m=1$, we obtain $\partial _t u\in L^1_{\text{loc}}(\mathbb{H}^n\times(0,\infty))$ from Lemma 6. By $L^1$ contractivity estimate, for every $t\geq \tau>0$, we get
    \begin{equation}
        \begin{aligned}
            \frac{ \norm{u(x,t+h)-u(x,t)}_1}{h}\leq \frac{ \norm{u(x,\tau+h)-u(x,\tau)}_1}{h}\leq c(\tau).\nonumber
        \end{aligned}
    \end{equation}
    Therefore, we prove $\partial _t u\in L^{\infty}((\tau,\infty):L^1(\mathbb{H}^n))$ in the linear case $m=1$.
\end{proof}

Now it's reasonable to use $u^m$ as a test function in the weak formulation of equation (1.1),
\begin{equation}
    \begin{aligned}
        \int_0^t \mathcal{E}_J(u^m,u^m) \diff s+\frac{1}{m+1}\int _{\mathbb{H}^n}|u|^{m+1}(x,t)\diff \mu(x)=\frac{1}{m+1}\int _{\mathbb{H}^n}|u_0|^{m+1}\diff \mu(x).\\
    \end{aligned}
\end{equation}
Then we have the following estimate:
\begin{proposition}
    If the initial data $u_0\in L^1(\mathbb{H}^n)\cap L^{\infty}(\mathbb{H}^n)$, then the strong solution to equation (1.1) satisfies
    \begin{equation}
        \begin{aligned}
            \int _0^\infty\mathcal{E}_J(u^m,u^m)\diff t\leq \frac{1}{m+1}\norm{u_0}_{m+1}^{m+1}.\\
        \end{aligned}
    \end{equation}
\end{proposition}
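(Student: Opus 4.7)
The proposition follows almost immediately from the energy identity (4.3) displayed just before the statement, so the bulk of the work is really in justifying that identity; once (4.3) is in hand, the rest is a one-line monotone convergence argument. The plan is therefore (a) argue that $u^m$ is a legitimate test function in the weak formulation, (b) derive (4.3) from such a testing, and (c) pass to $t \to \infty$.

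For step (a), I would insert $\zeta = u^m \cdot \eta(t)$ into the weak formulation, where $\eta$ is a smooth approximation of the characteristic function of $[0,t]$, after mollifying $u^m$ suitably in space. The strong-solution identity $\partial_t u + \mathcal{L}(u^m) = 0$, valid in $L^1_{\text{loc}}(\mathbb{H}^n \times (0,\infty))$ by Proposition 4, together with the property $u^m \in L^2_{\text{loc}}((0,\infty); \dot{\mathcal{H}}_\mathcal{L}(\mathbb{H}^n))$ from the definition of weak solution, provides enough regularity to pass to the limit in the mollification parameter. The pointwise algebraic identity $u^m\, \partial_t u = \tfrac{1}{m+1}\,\partial_t |u|^{m+1}$, which holds for sign-changing $u$ with the convention $u^m := |u|^{m-1}u$, converts the time-derivative contribution into the boundary terms appearing in (4.3), while the spatial pairing supplies $\mathcal{E}_J(u^m, u^m)$.

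For steps (b) and (c), granted (4.3), the nonnegativity of $|u(x,t)|^{m+1}$ yields
$$\int_0^t \mathcal{E}_J(u^m, u^m)\, \diff s \leq \frac{1}{m+1}\norm{u_0}_{m+1}^{m+1}$$
for every $t > 0$, and the right-hand side is finite because $u_0 \in L^1(\mathbb{H}^n) \cap L^\infty(\mathbb{H}^n)$ implies $u_0 \in L^{m+1}(\mathbb{H}^n)$ by interpolation. Since $J \geq 0$ forces $\mathcal{E}_J(u^m, u^m) \geq 0$ pointwise in $t$, the map $t \mapsto \int_0^t \mathcal{E}_J(u^m, u^m)\, \diff s$ is monotone nondecreasing in $t$, so monotone convergence gives the desired bound (4.4).

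The main obstacle is the test-function justification in step (a); the monotone convergence step itself is immediate. I expect this obstacle to be routine in light of the strong solution regularity proved in Proposition 4 and the Steklov-averaging technique already introduced in Lemma 6 — in particular, one can work with Steklov averages of $u^m$ to circumvent the fact that $u^m$ is not itself in $C^2_c(\mathbb{H}^n \times (0,\infty))$, and then pass to the limit using the $L^2_{\text{loc}}$ regularity of $u^m$ in the $\dot{\mathcal{H}}_\mathcal{L}$ seminorm together with $\partial_t u \in L^1_{\text{loc}}$.
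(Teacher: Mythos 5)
Your proof is correct and takes essentially the same route as the paper: the paper derives Proposition~7 directly from the energy identity (4.3), which it asserts holds because the strong-solution regularity makes $u^m$ an admissible test function, and then drops the nonnegative $|u|^{m+1}$ term and lets $t\to\infty$. You spell out the Steklov-averaging justification and the $L^1\cap L^\infty\subset L^{m+1}$ interpolation that the paper leaves implicit, but the skeleton of the argument is identical.
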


In fact, all the $L^p$ norms are non-increasing as time goes by. 
\begin{proposition}
    Any $L^p$ norm, $1\leq p \leq \infty$, of the strong solution to equation (1.1) with $u_0\in L^1(\mathbb{H}^n)\cap L^{\infty}(\mathbb{H}^n)$ is non-increasing in time. 
\end{proposition}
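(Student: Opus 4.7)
The plan is to differentiate $\|u(\cdot,t)\|_p^p$ in time, substitute the equation, and use the monotonicity of the bilinear form $\mathcal{E}_J$ evaluated against pairs of monotone functions of $u$. I will handle the cases $1<p<\infty$ by direct computation, the endpoint $p=1$ via $T$-contraction plus the semigroup structure, and the endpoint $p=\infty$ by passing $p\to\infty$.

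First I would consider $1<p<\infty$. Since $u$ is a strong solution with $u_0\in L^1(\mathbb{H}^n)\cap L^\infty(\mathbb{H}^n)$, Propositions 3 and 7 give $u\in L^\infty(\mathbb{H}^n\times(0,\infty))$ and $\partial_t u\in L^\infty_{\text{loc}}((\tau,\infty);L^1(\mathbb{H}^n))$. Hence $|u|^{p-1}\mathrm{sgn}(u)$ is bounded and lies in every $L^q(\mathbb{H}^n)$ for $q\geq 1$, so the pairing
\begin{equation*}
\frac{d}{dt}\frac{1}{p}\int_{\mathbb{H}^n}|u|^p\diff\mu=\int_{\mathbb{H}^n}|u|^{p-1}\mathrm{sgn}(u)\,\partial_t u\diff\mu = -\int_{\mathbb{H}^n}|u|^{p-1}\mathrm{sgn}(u)\,\mathcal{L}(u^m)\diff\mu = -\mathcal{E}_J\bigl(u^m,|u|^{p-1}\mathrm{sgn}(u)\bigr)
\end{equation*}
is meaningful after testing the parabolic weak formulation with a smooth, bounded, monotone approximation $\phi_n$ of $s\mapsto|s|^{p-1}\mathrm{sgn}(s)$ (cut off at a large level to guarantee membership in $\dot{\mathcal{H}}_{\mathcal{L}}(\mathbb{H}^n)$) and passing to the limit via dominated convergence, exactly as was done with the sign approximations $p_n$ in the proofs of Propositions 1 and 4. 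The key sign observation is that both $s\mapsto|s|^{m-1}s$ and $s\mapsto|s|^{p-1}\mathrm{sgn}(s)$ are non-decreasing, so for each pair $x,y$ the factors $u^m(x)-u^m(y)$ and $|u(x)|^{p-1}\mathrm{sgn}(u(x))-|u(y)|^{p-1}\mathrm{sgn}(u(y))$ share the same sign, hence
\begin{equation*}
\mathcal{E}_J\bigl(u^m,|u|^{p-1}\mathrm{sgn}(u)\bigr)=\frac{1}{2}\iint_{\mathbb{H}^n\times\mathbb{H}^n}\bigl(u^m(x)-u^m(y)\bigr)\bigl(|u(x)|^{p-1}\mathrm{sgn}(u(x))-|u(y)|^{p-1}\mathrm{sgn}(u(y))\bigr)J(x,y)\diff\mu(x)\diff\mu(y)\geq 0.
\end{equation*}
Integrating the resulting inequality $\frac{d}{dt}\|u(\cdot,t)\|_p^p\leq 0$ in $t$ yields monotonicity.

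The endpoint $p=1$ follows from the $T$-contraction inequality of Proposition 3 (or equivalently Proposition 4): since $\tilde{u}\equiv 0$ is the strong solution with zero initial datum, $T$-contraction applied to $u^+$ and $u^-$ together with the semigroup property of the Crandall--Liggett construction gives $\|u(\cdot,t_2)\|_1\leq\|u(\cdot,t_1)\|_1$ for any $0\leq t_1\leq t_2$ by restarting the equation at $t_1$. The endpoint $p=\infty$ then follows by letting $p\to\infty$ in the estimate $\|u(\cdot,t_2)\|_p\leq\|u(\cdot,t_1)\|_p$, using that $u(\cdot,t)\in L^1\cap L^\infty$ so $\|u(\cdot,t)\|_p\to\|u(\cdot,t)\|_\infty$.

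The main obstacle I anticipate is the rigorous justification of using $|u|^{p-1}\mathrm{sgn}(u)$ as a test function: a priori this function need not belong to $\dot{\mathcal{H}}_{\mathcal{L}}(\mathbb{H}^n)$, so the integration by parts identifying $\int|u|^{p-1}\mathrm{sgn}(u)\,\mathcal{L}(u^m)\diff\mu$ with $\mathcal{E}_J(u^m,|u|^{p-1}\mathrm{sgn}(u))$ must be obtained by approximation. The natural remedy is a two-parameter regularization: first Steklov averages in time (as in Lemma 6) to make $\partial_t u$ pointwise meaningful, and then monotone bounded Lipschitz truncations $\phi_n$ of $s\mapsto|s|^{p-1}\mathrm{sgn}(s)$ to guarantee $\phi_n(u)\in\dot{\mathcal{H}}_{\mathcal{L}}(\mathbb{H}^n)$; the sign structure is preserved along the approximation, so the inequality survives the double limit.
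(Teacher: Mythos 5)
Your proof is correct and follows the same overall strategy as the paper — multiply the equation by $|u|^{p-1}\mathrm{sgn}(u)$ for $1<p<\infty$, integrate, and show the energy term is nonnegative — but the justification of nonnegativity is genuinely different and worth noting. The paper invokes the Stroock--Varopoulos inequality (Lemma 2) to deduce $\mathcal{E}_J(u^m,u^{p-1})\geq C\,\mathcal{E}_J(u^{(m+p-1)/2},u^{(m+p-1)/2})\geq 0$, whereas you observe directly that $s\mapsto|s|^{m-1}s$ and $s\mapsto|s|^{p-1}\mathrm{sgn}(s)$ are both non-decreasing, so the integrand $(u^m(x)-u^m(y))(|u(x)|^{p-1}\mathrm{sgn}(u(x))-|u(y)|^{p-1}\mathrm{sgn}(u(y)))J(x,y)$ is pointwise nonnegative. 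Your argument is more elementary: it needs no extension-problem machinery and no equivalence of seminorms, and it makes transparent that only the sign, not any quantitative lower bound, is needed here. The paper's route is heavier but earns its keep elsewhere (the quantitative lower bound is essential in Proposition 9 for the smoothing estimates). For the endpoints: the paper simply cites Proposition 3, which already records $\|u(\cdot,t)\|_1\leq\|u_0\|_1$ and $\|u(\cdot,t)\|_\infty\leq\|u_0\|_\infty$, and then one restarts at time $t_1$ via the semigroup; your $p=1$ argument is the same in substance, and your $p\to\infty$ passage is a valid alternative to citing the $L^\infty$ bound directly. Finally, you correctly anticipate that $|u|^{p-1}\mathrm{sgn}(u)$ is not a priori an admissible test function and propose the standard remedy (Steklov averages plus monotone Lipschitz truncations), which is the same regularization device the paper uses in Lemma 6 and in the $p_n$ approximations of Propositions 1 and 4; this fills a gap that the paper's own proof glosses over.
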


\begin{proof}
    The cases of $p=1$ and $p=\infty$ have been obtained from the contraction property in Proposition 3. 
    
    When $1<p<\infty$, we multiply the equation (1.1) by $u^{p-1}$. By Stroock-Varopoulos inequality, we have
    \begin{equation}
        \begin{aligned}
            \frac{\diff }{\diff t}\int _{\mathbb{H}^n}|u|^p(x,t)\diff \mu(x)&\leq -p\mathcal{E}_J(u^m,u^{p-1})\\
            &\leq -C\mathcal{E}_J(u^{\frac{m+p-1}{2}},u^{\frac{m+p-1}{2}})\leq 0.\\
        \end{aligned}
    \end{equation}
\end{proof}

\section{Existence and Uniqueness with General Initial Data}

In order to prove the existence of solutions corresponding to less restrictive initial data, we first prove the smoothing effects to obtain that the solutions are uniformly bounded away from $t=0$. These estimates will be used in the approximation process to obtain solutions with the general initial data. The smoothing effects are analogues of the results in \autocite{agf_dpa}.

\begin{proposition}
    Let $0<\alpha<2$, $m>0$, and take $p>p^*(m)=\max\{1,(1-m)Q/\alpha\}$. Then for every $u_0\in L^1(\mathbb{H}^n)\cap L^\infty(\mathbb{H}^n)$, the solution to the Problem (1.1) satisfies
    \begin{equation}
        \begin{aligned}
            \sup\limits_{x\in \mathbb{H}^n}|u(x,t)|\leq C(m,p,\alpha,Q, \Lambda)t^{-\gamma_p}\norm{u_0}_p^{\delta_p}
        \end{aligned}
    \end{equation}
    with $\gamma_p=(m-1+\alpha p/Q)^{-1}$ and $\delta_p=\alpha p \gamma_p/Q$.
\end{proposition}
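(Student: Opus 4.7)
The plan is to carry out a Moser iteration in the style of \autocite{agf_dpa}, now driven by the Heisenberg tools assembled in Sections 2--4. First I would derive the $L^p$ energy inequality already foreshadowed in Proposition 6: multiplying (1.1) by $|u|^{p-1}\mathrm{sgn}(u)$ (admissible for the strong solutions produced in Section 4), applying the Stroock--Varopoulos inequality (Lemma 4) with $\Psi(u)=c_{m,p}|u|^{(m+p-1)/2}\mathrm{sgn}(u)$, and then invoking the norm equivalence (2.1) together with the Hardy--Littlewood--Sobolev embedding (Lemma 5) to bound the resulting Dirichlet form from below, one obtains
\begin{equation*}
    \frac{d}{dt}\norm{u(t)}_p^p \leq -c(m,p,\alpha,Q,\Lambda)\,\norm{u(t)}_{q(p)}^{m+p-1},\qquad q(p):=\frac{(m+p-1)Q}{Q-\alpha}.
\end{equation*}
The hypothesis $p>p^*(m)$ is precisely $q(p)>p$, supplying the gain of integrability that drives the iteration.

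Next I would prove a one-shot smoothing $L^p\to L^{q(p)}$. Since $r\mapsto\norm{u(r)}_{q(p)}$ is non-increasing (Proposition 6), integrating the energy inequality on $[0,t]$ and discarding $\norm{u(t)}_p^p\geq 0$ gives
\begin{equation*}
    c\,t\,\norm{u(t)}_{q(p)}^{m+p-1}\leq \int_0^t\norm{u(r)}_{q(p)}^{m+p-1}\diff r\leq \norm{u_0}_p^p,
\end{equation*}
so $\norm{u(t)}_{q(p)}\leq C\,t^{-1/(m+p-1)}\norm{u_0}_p^{p/(m+p-1)}$. To reach $L^\infty$ I would iterate: set $p_0:=p$ and $p_{k+1}:=q(p_k)=\lambda(p_k+m-1)$ with $\lambda=Q/(Q-\alpha)>1$. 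Because $p_0>p^*(m)$ the map $p\mapsto q(p)$ preserves $(p^*(m),\infty)$, so $p_k$ is strictly increasing and grows geometrically. Choose times $t_k:=t(1-2^{-k})$, apply the one-shot smoothing on each $[t_k,t_{k+1}]$ with initial datum $u(t_k)$, and take logarithms; the resulting series---in particular $\prod_k p_k/(m+p_k-1)$ together with the coefficients of $\log t$ and $\log 2^k$---all converge, since $\sum_k 1/p_k<\infty$. Passing to the limit yields $\norm{u(t)}_\infty\leq C\,t^{-a}\norm{u_0}_p^b$ for specific exponents $a,b$.

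The pair $(a,b)$ is pinned down by the scaling invariance of (1.1) under $u(x,t)\mapsto\lambda\,u(\delta_\mu x,\mu^\alpha\lambda^{m-1}t)$: any bound $\norm{u(t)}_\infty\leq C\,t^{-a}\norm{u_0}_p^b$ must be compatible with this rescaling, which forces $b=\alpha p\,a/Q$ and $(m-1)a+b=1$, solving uniquely to $a=(m-1+\alpha p/Q)^{-1}=\gamma_p$ and $b=\alpha p\,\gamma_p/Q=\delta_p$; the explicit summation in the previous step reproduces these values and so serves as a consistency check rather than the place where the exponents are really discovered. The main technical nuisance I anticipate is keeping the constant $c(m,p_k,\alpha,Q,\Lambda)$ in the energy inequality under control as $p_k\to\infty$; the Stroock--Varopoulos factor $\sim m(p_k-1)/(p_k+m-1)^2$ decays like $1/p_k$, so after raising to the power $1/(m+p_k-1)$ and multiplying across $k$ the product $\prod_k C_k$ remains finite, but this degenerate behavior of the constants is what forces the logarithmic bookkeeping in Step~3 and must be carried out carefully. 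A secondary technicality is the rigorous admissibility of $|u|^{p-1}\mathrm{sgn}(u)$ as a test function, which is handled by the usual truncation-plus-mollification argument combined with the strong-solution regularity $\partial_t u\in L^\infty_{\mathrm{loc}}((\tau,\infty);L^1)$ established in Proposition 7.
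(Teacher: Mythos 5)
Your proposal is correct and reaches the same conclusion by a Moser iteration that differs from the paper's in one technical choice. The paper multiplies the Dirichlet-form integrand by $\norm{u(\tau)}_{p_k}^{p_k}$ (legitimate since $L^p$ norms are non-increasing) and then invokes the interpolated Hardy--Littlewood--Sobolev estimate (2.5) of Lemma~6, which produces the recurrence $p_{k+1}=\tfrac{Q(2p_k+m-1)}{2Q-\alpha}$; your version instead bounds $\mathcal{E}_J\bigl(|u|^{(m+p_k-1)/2},\cdot\bigr)$ from below directly by the pure HLS embedding (Lemma~5, i.e.\ (2.4)), giving the cleaner one-shot $L^{p_k}\to L^{q(p_k)}$ smoothing with recurrence $p_{k+1}=\tfrac{Q(p_k+m-1)}{Q-\alpha}$. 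Both maps fix $\tfrac{(1-m)Q}{\alpha}$, preserve $(p^*(m),\infty)$, and diverge geometrically, so both iterations close; yours has the advantage of being more elementary (no interpolation lemma needed, and no need to carry $\norm{u(\tau)}_{p_k}^{p_k}$ inside the time integral), at the cost of a slightly larger per-step jump in integrability exponent. Your correct observation that the scaling symmetry $u\mapsto\lambda u(\delta_\mu x,\mu^\alpha\lambda^{m-1}t)$ already forces $\gamma_p$ and $\delta_p$ --- so the explicit convergence of $\alpha_k,\beta_k,\sigma_k$ serves only as a consistency check --- is a useful conceptual point the paper does not make, electing instead to sum the series outright. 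The two concerns you flagged (degeneration of the Stroock--Varopoulos constant like $1/p_k$, admissibility of $|u|^{p-1}\mathrm{sgn}(u)$) are genuine but are handled exactly as you anticipate, the former via the geometric decay of $1/(m+p_k-1)$ and the latter via the strong-solution regularity of Proposition~7.
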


\begin{proof}
    The parabolic Moser iterative technique will be used to prove the estimates. Fix $t$, consider the following sequence $t_k=(1-2^{-k})t$ with $0\leq k < +\infty$. Multiplying the equation (1.1) by $u^{p_k-1}$, then integrate in $\mathbb{H}^n\times[t_k,t_{k+1}]$. Here $p_k\geq p_0>1$ which will be determined afterwards. Using the Stroock-Varopoulos inequality and the decay of $L^p$ norms, we have
    \begin{equation}
        \begin{aligned}
            \norm{u(\cdot,t_k)}_{p_k}^{p_k}
            &\geq  C(Q, \alpha, \Lambda)\frac{4mp_k(p_k-1)}{(p_k+m-1)^2}\int _{t_k}^{t_{k+1}}\mathcal{E}_J(|u|^{\frac{p_k+m-1}{2}}(\cdot,\tau),|u|^{\frac{p_k+m-1}{2}}(\cdot,\tau))\diff \tau\\
            & \geq \frac{1}{d_k\norm{u(\cdot,t_k)}_{p_k}^{p_k}}\int _{t_k}^{t_{k+1}}\norm{u(\cdot,\tau)}_{p_k}^{p_k}\mathcal{E}_J(|u|^{\frac{p_k+m-1}{2}}(\cdot,\tau),|u|^{\frac{p_k+m-1}{2}}(\cdot,\tau))\diff \tau.\nonumber\\
        \end{aligned}
    \end{equation}
    Here $d_k=\frac{1}{C(Q, \alpha, \Lambda)}\cdot\frac{(p_k+m-1)^2}{4mp_k(p_k-1)}$.  By using the inequalities (2.4) and (2.5), we have
    \begin{equation}
        \begin{aligned}
            \int _{t_k}^{t_{k+1}}\norm{u(\cdot,\tau)}_{p_k}^{p_k}\mathcal{E}_J(|u|^{\frac{p_k+m-1}{2}}(\cdot,\tau),|u|^{\frac{p_k+m-1}{2}}(\cdot,\tau))\diff \tau&\geq C\int _{t_k}^{t_{k+1}}\norm{u(\cdot,\tau)}_{\frac{Q(2p_k+m-1)}{2Q-\alpha}}^{2p_k+m-1}\diff \tau\\
        &\geq C2^{-k}t\norm{u(\cdot,t_{k+1})}_{\frac{Q(2p_k+m-1)}{2Q-\alpha}}^{2p_k+m-1}.\nonumber\\
        \end{aligned}
    \end{equation}
    Therefore,
    \begin{equation}
        \begin{aligned}
            \norm{u(\cdot,t_{k+1})}_{p_{k+1}}\leq (C2^k d_k t^{-1})^{\frac{s}{2p_{k+1}}}\norm{u(\cdot,t_k)}_{p_k}^{\frac{sp_k}{p_{k+1}}},
        \end{aligned}
    \end{equation}
    where $p_{k+1}=s(p_k+\frac{m-1}{2})$ and $s=\frac{2Q}{2Q-\alpha}>1$. Hereafter, $\{p_k\}$ is an increasing sequence and $\lim\limits_{k\rightarrow+\infty}p_k = +\infty$, we discover that 
    \begin{equation*}
        \begin{aligned}
            p_k = A(s^{k} - 1) + p_0, ~~A = p_0 - \frac{(1-m)Q}{\alpha}.
        \end{aligned}
    \end{equation*}
    To guarantee that $A>0$, we impose the following condition on $p_0$:
    \begin{equation*}
        \begin{aligned}
            p_0 \geq \frac{(1 - m)Q}{\alpha}.
        \end{aligned}
    \end{equation*}
    Notice that if $m\geq 1$, we have
    \begin{equation*}
        \begin{aligned}
            d_k &= C(Q,\alpha,\Lambda) \frac{(p_k +m -1)^2}{mp_k^2} \cdot \frac{p_k}{p_k -1} \leq C(Q,\alpha,\Lambda)\frac{4(m-1)}{m}\cdot\frac{p_k}{p_k -1} \\
            &\leq C(Q,\alpha,\Lambda) \cdot \frac{p_k}{p_k -1} \leq C(Q,\alpha,\Lambda) \frac{p_k}{p_0-1}.
        \end{aligned}
    \end{equation*}
    If $m< 1$, we have
    \begin{equation*}
        \begin{aligned}
            d_k &= C(Q,\alpha,\Lambda) \frac{(p_k +m -1)^2}{mp_k^2} \cdot \frac{p_k}{p_k -1} \leq C(Q,\alpha,\Lambda) \frac{\alpha m}{(1-m)Q-\alpha}\cdot p_k.
        \end{aligned}
    \end{equation*}
    Define $U_k = \norm{u(\cdot, t_k)}_{p_k}$. From the iterative inequalities (5.2), we have 
    \begin{equation*}
        \begin{aligned}
            U_{k+1} \leq C^{\frac{k}{p_{k+1}}}t^{-\frac{s}{2p_{k+1}}}U_k^{\frac{sp_k}{p_{k+1}}}.
        \end{aligned}
    \end{equation*}
    Therefore, we obtain
    \begin{equation*}
        \begin{aligned}
            U_k \leq C^{\alpha_k}t^{-\beta_k}U_0^{\sigma_k}.
        \end{aligned}
    \end{equation*}
    Here
    \begin{equation*}
        \begin{aligned}
            \alpha_k = \frac{1}{p_k}\sum_{j=1}^{k-1}(k-j)s^j,~~\beta_k = \frac{1}{2p_k} \sum_{j=1}^{k}s^j,~~\sigma_k = \frac{s^k p_0}{p_k}.
        \end{aligned}
    \end{equation*}
    Let $k$ approach infinity, by the iteration process, we have
    \begin{equation*}
        \begin{aligned}
            \lim\limits_{k\rightarrow+\infty}\alpha_k = \frac{Q(Q-\alpha)}{\alpha^2 A}, ~~\lim\limits_{k\rightarrow+\infty}\beta_k = \frac{Q}{\alpha A},~~\lim\limits_{k\rightarrow+\infty}\sigma_k = \frac{p_0}{ A}.
        \end{aligned}
    \end{equation*}
    Hereafter,
    \begin{equation*}
        \begin{aligned}
            \norm{u(\cdot,t)}_{\infty} & = \lim\limits_{k\rightarrow +\infty} \norm{u(\cdot, \frac{t}{1-2^{-k}})}_{p_k}\\
            &\leq \lim\limits_{k\rightarrow +\infty} C(Q,\alpha,\Lambda)(\frac{t}{1-2^{-k}})^{-\frac{Q}{\alpha A}} U_0^{\frac{p_0}{A}}\\
            &\leq C(Q,\alpha, \Lambda) t^{-\frac{Q}{(m-1)Q+p_0\alpha}}\norm{f}_{p_0}^{\frac{p_0\alpha}{(m-1)Q+p_0\alpha}}.
        \end{aligned}
    \end{equation*}
    By the above Moser iteration technique, we obtain the desired smoothing effects estimates (5.1). 
\end{proof}

\begin{remark}
    If $m\leq m^*$, the constant $A$ goes to zero when $p\rightarrow p*(m)$. Hereafter, the constants in the estimates (5.1) blow up as $p\rightarrow p*(m)$. If $m> m^*$ and $p\rightarrow 1$, the $\frac{1}{p-1}$ of constant $d_k$ in Proposition 9 blow up. 

    However, an iterative interpolation argument, see the analogues in \cite{agf_dpa}, will give rise to the $L^1-L^\infty$ smoothing effect as $m>m_*$.
\end{remark}

\begin{proposition}
    Let $0<\alpha<2$, $m>m_*$. For every $u_0\in L^1(\mathbb{H}^n)\cap L^\infty(\mathbb{H}^n)$, the solution to equation (1.1) satisfies
    \begin{equation}
        \begin{aligned}
            \sup\limits_{x\in \mathbb{H}^n}|u(x,t)|\leq C(m,\alpha,Q)t^{-\gamma_1}\norm{u_0}_1^{\delta_1}
        \end{aligned}
    \end{equation}
    with $\gamma_1=(m-1+\alpha/Q)^{-1}$ and $\delta_1=\alpha\gamma_1/Q$.
\end{proposition}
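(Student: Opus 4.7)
The plan is to bootstrap the $L^p\!-\!L^\infty$ smoothing estimate (5.1) of Proposition 9 down to the $L^1\!-\!L^\infty$ regime by combining it with H\"older interpolation and the non-increase of the $L^1$ norm (Proposition 8), following the analogue in \autocite{agf_dpa}. Fix any $p>1$. By the semigroup property, apply Proposition 9 to the problem restarted at time $t/2$:
\begin{equation*}
\norm{u(\cdot,t)}_\infty \le C(m,p,\alpha,Q,\Lambda)\,(t/2)^{-\gamma_p}\,\norm{u(\cdot,t/2)}_p^{\delta_p}.
\end{equation*}
Interpolating and using $\norm{u(\cdot,t/2)}_1\le\norm{u_0}_1$ from Proposition 8,
\begin{equation*}
\norm{u(\cdot,t/2)}_p \le \norm{u(\cdot,t/2)}_1^{1/p}\,\norm{u(\cdot,t/2)}_\infty^{(p-1)/p} \le \norm{u_0}_1^{1/p}\,\norm{u(\cdot,t/2)}_\infty^{(p-1)/p}.
\end{equation*}
Writing $S(t):=\norm{u(\cdot,t)}_\infty$ and $A:=\norm{u_0}_1$, these combine to
\begin{equation*}
S(t) \le C\,t^{-\gamma_p}\,A^{\delta_p/p}\,S(t/2)^{\theta}, \qquad \theta:=\frac{\delta_p(p-1)}{p}=\frac{\alpha(p-1)\gamma_p}{Q}.
\end{equation*}

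A short algebraic check shows that $\theta<1$ if and only if $(m-1)Q+\alpha>0$, i.e.\ exactly when $m>m^*$; moreover the Young-type identities $\gamma_p=(1-\theta)\gamma_1$ and $\delta_p/p=(1-\theta)\delta_1$ hold (both come directly from the definitions of $\gamma_p$, $\delta_p$ and $\theta$). In terms of the dimensionless quantity $g(t):=t^{\gamma_1}S(t)/A^{\delta_1}$, the previous display therefore becomes the self-similar inequality
\begin{equation*}
g(t) \le C'\,g(t/2)^{\theta}.
\end{equation*}

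The bootstrap is now elementary. For fixed $T>0$, the $L^\infty$ non-increase $S(t)\le\norm{u_0}_\infty$ ensures $H(T):=\sup_{0<t\le T}g(t)<\infty$ a priori. Taking the supremum for $t\le T$ in the previous display and using $t/2\le T$ gives $H(T)\le C'\,H(T)^\theta$, and since $\theta<1$ we conclude $H(T)\le (C')^{1/(1-\theta)}$, a bound independent of $T$ and of $u_0$. Unwinding the definition of $g$ gives exactly the estimate (5.3). The main technical point I expect to be most delicate is the self-improvement step: one must know $H(T)$ is finite before one can absorb it, and the iteration only closes because the exponent $\theta$ is strictly less than $1$, which is why the argument breaks down precisely at $m=m^*$ and justifies restricting to $m>m^*$ here, consistent with the obstruction noted in Remark~4.
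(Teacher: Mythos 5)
Your proof is correct. The underlying mathematics is the same as the paper's: restart the evolution at a later time, apply the $L^p$--$L^\infty$ smoothing of Proposition~9 once, interpolate $\norm{u}_p\le\norm{u}_1^{1/p}\norm{u}_\infty^{(p-1)/p}$, and exploit that the resulting exponent $\theta$ on the $L^\infty$ norm is $<1$ precisely when $m>m^*$. Where you diverge is in the packaging. The paper fixes $p=2$, iterates along the dyadic sequence $\tau_k=2^{-k}t$, and then explicitly sums five sequences of exponents $(a_k,b_k,c_k,d_k,e_k)$, discharging the leftover $\norm{u(\cdot,\tau_k)}_2^{e_k}$ term via $e_k\to0$. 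You instead observe that the single-step recursion is exactly a self-similar inequality $g(t)\le C'g(t/2)^\theta$ for the rescaled quantity $g(t)=t^{\gamma_1}\norm{u(\cdot,t)}_\infty/\norm{u_0}_1^{\delta_1}$, verify the Young-type identities $\gamma_p=(1-\theta)\gamma_1$, $\delta_p/p=(1-\theta)\delta_1$ once, and then close by an a priori finiteness plus absorption argument (using $\norm{u(\cdot,t)}_\infty\le\norm{u_0}_\infty$ to know $H(T)<\infty$ before dividing). This is cleaner: it avoids tracking the geometric sums, makes manifest exactly where the condition $m>m^*$ (i.e.\ $\theta<1$) is used, and keeps $p$ general. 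The only price is that the absorption step requires a little care — you correctly flag that $H(T)<\infty$ must be established a priori before one can cancel — whereas the paper's explicit iteration makes that finiteness visible at each finite $k$. Both approaches deliver the same constant, independent of $\norm{u_0}_\infty$, in the end.
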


\begin{proof}
    For fixed $t$, let $\tau_k = 2^{-k}t$. Here $0\leq k<+\infty$. Since $p^*(m)< 1$ if $m>m^*$, Applying the smoothing effects estimates (5.1) in Proposition 9 on the interval $[\tau_1, \tau_0]$ with $p=2$, we have
    \begin{equation*}
        \begin{aligned}
            \norm{u(\cdot,t)}_{\infty} \leq C(Q,\alpha,\Lambda) (\frac{t}{2})^{-\gamma_2}\norm{u(\cdot,\tau_1)}_1^{\frac{\alpha\gamma_2}{Q}}\norm{u(\cdot,\tau_1)}_{\infty}^{\frac{\alpha\gamma_2}{Q}}.
        \end{aligned}
    \end{equation*}
    Considering the iterative sequence $\tau_k$, applying the estimates (5.1) with $p=2$ on the interval $[\tau_2, \tau_1]$, we have
    \begin{equation*}
        \begin{aligned}
            \norm{u(\cdot,t)}_{\infty} \leq C(Q,\alpha,\Lambda) (\frac{t}{2})^{-\gamma_2}\norm{u(\cdot,\tau_1)}_1^{\frac{\alpha\gamma_2}{Q}}\cdot(C(Q,\alpha,\Lambda) (\frac{t}{4})^{-\gamma_2}\norm{u(\cdot,\tau_2)}_2^{\frac{2\alpha\gamma_2}{Q}})^{\frac{\alpha\gamma_2}{Q}}
        \end{aligned}
    \end{equation*}
    Following the iterative process, since $\norm{u(\cdot, t)}_1\leq \norm{u(\cdot, 0)}_1$ for each $t\geq 0$, we obtain
    \begin{equation*}
        \begin{aligned}
            \norm{u(\cdot,t)}_{\infty} \leq  C(Q,\alpha,\Lambda)^{a_k}\cdot 2^{b_k}\cdot t^{-c_k}\cdot \norm{u_0}_1^{d_k}\cdot \norm{u(\cdot,\tau_k)}_2^{e_k}.
        \end{aligned}
    \end{equation*}
    Since $m> m^*$, the constant $\frac{\gamma_2\alpha}{Q}<1$. Then,
    \begin{equation*}
        \begin{aligned}
            &\lim\limits_{k\rightarrow+\infty}a_k = \lim\limits_{k\rightarrow+\infty}\sum_{j=0}^{k-1}(\frac{\gamma_2\alpha}{Q})^j = \frac{(m-1)Q+2\alpha}{(m-1)Q+\alpha},\\
            &\lim\limits_{k\rightarrow+\infty}b_k = \lim\limits_{k\rightarrow+\infty}\sum_{j=0}^{k-1}\gamma_2(j+1)(\frac{\gamma_2\alpha}{Q})^j = \frac{(m-1)Q+2\alpha}{((m-1)Q+\alpha)^2},\\
            &\lim\limits_{k\rightarrow+\infty}c_k = \lim\limits_{k\rightarrow+\infty}\sum_{j=0}^{k-1}\gamma_2(\frac{\gamma_2\alpha}{Q})^j = \frac{Q}{(m-1)Q+\alpha} = \gamma_1,\\
            &\lim\limits_{k\rightarrow+\infty}d_k = \lim\limits_{k\rightarrow+\infty}\sum_{j=1}^{k-1}(\frac{\gamma_2\alpha}{Q})^j = \frac{\alpha}{(m-1)Q+\alpha} = \gamma_1\cdot\frac{\alpha}{Q},\\
            &\lim\limits_{k\rightarrow+\infty}e_k = \lim\limits_{k\rightarrow+\infty}2(\frac{\gamma_2\alpha}{Q})^k = 0.
        \end{aligned}
    \end{equation*}
    which are similar to the constants in \autocite{agf_dpa}. Using the fact that $\norm{u(\cdot,\tau_k)}_2\leq \norm{u_0}_2$, we prove
    \begin{equation*}
        \begin{aligned}
            \sup\limits_{x\in \mathbb{H}^n}|u(x,t)|\leq C(m,\alpha,Q)t^{-\gamma_1}\norm{u_0}_1^{\frac{\gamma_1\alpha}{Q}}
        \end{aligned}
    \end{equation*}
\end{proof}

A generalization of the existence of the weak solutions to the less restrictive initial data, indeed which are strong with respect to time $t$ variable, will be proved at the end of this section. 

If $m> m^*$, constructing the approximating solutions corresponding to the approximating initial data $\{u_{0,k}\} \in L^1(\mathbb{H}^n)\cap L^{\infty}(\mathbb{H}^n)$. Since we have already obtained the existence of the weak solutions to the bounded initial data, the point here is to employ the smoothing effects estimates (5.3) for $m>m^*$ to give a uniform $L^\infty(\mathbb{H}^n)$ norm of the approximating solutions away $t=0$. Hereafter, passing the weak formualtions to the limit, we prove the weak formulation holds for the general inital data $u_0\in L^1(\mathbb{H}^n)$. 

If $m\leq m^*$, the smoothing effects estimates  (5.2) only hold for $p>\max\{1,(1-m)Q/\alpha\}$. Therefore, we obtain the existence of the weak solutions for each $u_0\in L^1(\mathbb{H}^n)\cap L^p(\mathbb{H}^n)$ with suitable parameter $p$.

\begin{theorem}
    Let $0<\alpha<2$, $m>0$. Assume $u_0\in L^1(\mathbb{H}^n)$ if $m>m^*$; $u_0\in L^1(\mathbb{H}^n)\cap L^p(\mathbb{H}^n)$ with $p>\max\{1,(1-m)Q/\alpha\}$ if $m\leq m^*$, there exists a strong solution to equation (1.1) with the initial data $u_0$. 
\end{theorem}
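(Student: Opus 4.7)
The plan is to build the strong solution for general initial data $u_0$ by a density argument, using bounded initial data as approximations and combining the $L^1$ contraction with the smoothing effects proven earlier as the two driving tools.

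First I would fix an approximating sequence $\{u_{0,k}\}\subset L^1(\mathbb{H}^n)\cap L^\infty(\mathbb{H}^n)$ with $u_{0,k}\to u_0$ in $L^1(\mathbb{H}^n)$, and additionally in $L^p(\mathbb{H}^n)$ when $m\leq m^*$. A convenient choice is the double truncation $u_{0,k}=\mathbbm{1}_{B_k(0)}\max\{-k,\min\{k,u_0\}\}$. By Proposition 3 together with Proposition 7, for each $k$ there is a strong solution $u_k\in C([0,\infty);L^1(\mathbb{H}^n))$ to (1.1) with initial datum $u_{0,k}$ enjoying the $L^1$ contraction property, the $L^p$ non-increase of Proposition 8, and the quantitative bound $\|\partial_t u_k(\cdot,t)\|_1\leq \frac{2}{|m-1|t}\|u_{0,k}\|_1$ for $m\neq 1$ (resp.\ the $m=1$ analogue from Proposition 7).

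Next I would apply the appropriate smoothing effect to get a uniform $L^\infty$ bound away from $t=0$. When $m>m^*$, Proposition 10 gives
\begin{equation*}
\sup_{x\in\mathbb{H}^n}|u_k(x,t)|\leq C(m,\alpha,Q)\,t^{-\gamma_1}\|u_{0,k}\|_1^{\delta_1},
\end{equation*}
which is uniformly bounded on $[\tau,\infty)$ since $\|u_{0,k}\|_1$ converges. When $m\leq m^*$, Proposition 9 with the admissible $p$ gives the analogous $t^{-\gamma_p}\|u_{0,k}\|_p^{\delta_p}$ bound, uniform because the $L^p$ approximation is also arranged. The $L^1$ contraction in Proposition 3 then yields
\begin{equation*}
\|u_k(\cdot,t)-u_j(\cdot,t)\|_1\leq \|u_{0,k}-u_{0,j}\|_1\longrightarrow 0,
\end{equation*}
so $\{u_k\}$ is Cauchy in $C([0,\infty);L^1(\mathbb{H}^n))$ with some limit $u\in C([0,\infty);L^1(\mathbb{H}^n))$ satisfying $u(\cdot,0)=u_0$. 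Interpolating the uniform $L^\infty$ bound on $[\tau,\infty)\times\mathbb{H}^n$ with $L^1$ convergence promotes this to $L^q$ convergence for every $q\in[1,\infty)$, locally away from $t=0$.

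Finally I would pass to the limit in the weak formulation. The energy estimate (4.4) shows $\{u_k^m\}$ is bounded in $L^2_{\mathrm{loc}}((0,\infty);\dot{\mathcal{H}}_\mathcal{L}(\mathbb{H}^n))$; extracting a weakly convergent subsequence and using the strong $L^q$ convergence of $u_k$ together with the local Lipschitz character of $s\mapsto |s|^{m-1}s$ identifies the weak limit as $u^m$. This allows the passage of both $\int u_k\,\partial_t\zeta$ and $\int \mathcal{E}_J(u_k^m,\zeta)$ to their respective limits for every $\zeta\in C_c^2(\mathbb{H}^n\times(0,\infty))$, so $u$ is a weak solution. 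The uniform estimate $\|\partial_t u_k(\cdot,t)\|_1\leq\frac{C}{t}\|u_0\|_1$ then descends through weak-$*$ limits to give $\partial_t u\in L^\infty((\tau,\infty);L^1(\mathbb{H}^n))$, establishing that $u$ is a strong solution. The main obstacle is the identification of the nonlinear weak limit of $u_k^m$: one must match the weak $\dot{\mathcal{H}}_\mathcal{L}$-limit with the pointwise/$L^q$ limit of $u_k$, which is where the uniform $L^\infty$ smoothing estimate on $[\tau,\infty)\times\mathbb{H}^n$ is indispensable, since only then is $s\mapsto |s|^{m-1}s$ controllably Lipschitz and only then does $u_k^m\to u^m$ strongly in the $L^q$ scale.
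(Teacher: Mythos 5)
Your proof proceeds by essentially the same route as the paper: approximate $u_0$ by bounded data, use the $L^1$-contraction to get $u_k\to u$ in $C([0,\infty);L^1(\mathbb{H}^n))$, invoke the smoothing estimates for a uniform $L^\infty$ bound away from $t=0$, extract a weak limit of $u_k^m$ from the energy identity, pass to the limit in the weak formulation, and inherit the $\partial_t u\in L^\infty((\tau,\infty);L^1)$ bound. One minor imprecision: when $m<1$ the map $s\mapsto|s|^{m-1}s$ is only H\"older, not Lipschitz, near $s=0$, but your identification of the limit as $u^m$ still goes through by passing to a subsequence converging a.e.\ and applying dominated convergence under the uniform $L^\infty$ bound.
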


\begin{proof}
    Let $\{u_{0,k}\}\subset L^1(\mathbb{H}^n)\cap L^\infty(\mathbb{H}^n)$ be a sequence of functions converging to $u_0$ in $L^1(\mathbb{H}^n)\cap L^p(\mathbb{H}^n)$ satisfying 
    \begin{equation*}
        \begin{aligned}
            \norm{u_{0,k}}_p\leq \norm{u_0}_p.
        \end{aligned}
    \end{equation*}
    Denote by $\{u_k\}$ the sequence of the solutions corresponding to the initial data $u_{0,k}$. 

    \noindent Applying the $L^1$-contraction property and Crandall-Ligget's theorem, we obtain $u_k\rightarrow u$ in $C([0,\infty):L^1(\mathbb{H}^n))$. By the smoothing effects estimates (5.2) or (5.3), we have 
    \begin{equation*}
        \begin{aligned}
            \sup\limits_{x\in \mathbb{H}^n}|u_k(x,t)|\leq C(m,p,\alpha,Q, \Lambda)t^{-\gamma_p}\norm{u_0}_p^{\delta_p}\leq C(m,p,\alpha,Q,\Lambda)t^{-\gamma_p}.
        \end{aligned}
    \end{equation*}

    \noindent Since $\{u_k\}$ are strong solutions satisfying (3.10), the weak formulations implies
    \begin{equation*}
        \begin{aligned}
            \int_{t_1}^{t_2}\mathcal{E}_J(u_k^m,u_k^m)\diff t + \frac{1}{m+1} \int_{\mathbb{H}^n}|u_k|^{m+1}(x,t_2)\diff \mu(x) = \frac{1}{m+1}\int_{\mathbb{H}^n}|u_k|^{m+1}(x,t_1)\diff \mu(x).
        \end{aligned}
    \end{equation*}
    Hereafter, the norm ${ L^2((\tau,\infty):\dot{\mathcal{H}_{\mathcal{L}}}(\mathbb{H}^n))}$ of the solutions $\{u_k\}$ are uniformly bounded which are independent of $k$. By the Banach-Alaoglu theorem, $\{u_k\}$ converge to $u$ in the weak-$^*$ topology on 
    $L^2_{\rm{loc}}((0,\infty):\dot{\mathcal{H}_{\mathcal{L}}}(\mathbb{H}^n))$. Notice that the following weak formulations of the solutions $\{u_k\}$ hold,
    \begin{equation*}
        \begin{aligned}
            \int _0^\infty \int _{\mathbb{H}^n} u_k\partial _t \zeta-\int_0^\infty \mathcal{E}_J(u_k^m,\zeta)=0,~~ \rm{for ~each} ~\zeta\in C^2_{c}(\mathbb{H}^n\times(0,\infty)).
        \end{aligned}
    \end{equation*}
    Passing to the limit, we obtain
    \begin{equation*}
        \begin{aligned}
            \int _0^\infty \int _{\mathbb{H}^n} u\partial _t \zeta-\int_0^\infty \mathcal{E}_J(u^m,\zeta)=0,~~ \rm{for ~each} ~\zeta\in C^2_{c}(\mathbb{H}^n\times(0,\infty)).
        \end{aligned}
    \end{equation*}
    Furthermore, we have
    \begin{equation}
        \begin{aligned}
            \int _{\mathbb{H}^n}|u(x,t)-u_0(x)|\diff \mu(x)&\leq \int _{\mathbb{H}^n}|u(x,t)-u_k(x,t)|\diff \mu(x)+\int_{\mathbb{H}^n}|u_k(x,t)-u_{0,k}(x)|\diff \mu(x)\\
            &+\int _{\mathbb{H}^n}|u_{0,k}(x)-u_0(x)|\diff\mu(x).\nonumber\\
        \end{aligned}
    \end{equation}
    Passing to the limit, we derive the initial condition.
    \begin{equation*}
        \begin{aligned}
            u(x,0) = u_0(x) ~~\rm{almost~everywhere}.
        \end{aligned}
    \end{equation*}
    The above constructed weak solution $u$ is strong satisfying (3.10) by Proposition 6.
\end{proof}

\section{Further Properties}
\subsection{Conservation of mass if $m>m^*$}
\begin{proposition}
    Let $m>m^*$, $0<\alpha<2$. $J$ satisfies assumption (1.3). For each $u_0\in L^1(\mathbb{H}^n)$, the corresponding solution $u$ satisfies the property, conservation of mass: $\int _{\mathbb{H}^n}u(x,t)\diff \mu(x)=\int_{\mathbb{H}^n}u_0(x,t)\diff \mu(x)$.
\end{proposition}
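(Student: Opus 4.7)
The plan is to first prove conservation of mass under the stronger hypothesis $u_0 \in L^1(\mathbb{H}^n) \cap L^\infty(\mathbb{H}^n)$, and then extend to general $u_0 \in L^1(\mathbb{H}^n)$ via the approximation scheme of Theorem 3: the $L^1$-contraction (Proposition 3) yields $u_k(t) \to u(t)$ in $L^1(\mathbb{H}^n)$, and since the mass functional is continuous on $L^1(\mathbb{H}^n)$, conservation for the bounded approximants transfers to the limit.

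For bounded initial data, I would fix a cutoff $\varphi \in C_c^\infty(\mathbb{H}^n)$ with $0 \leq \varphi \leq 1$, $\varphi \equiv 1$ on $B_1$, $\operatorname{supp}\varphi \subset B_2$, and set $\varphi_R(x) = \varphi(\delta_{1/R}(x))$. Since $u$ is a strong solution (so $\partial_t u \in L^1_{\mathrm{loc}}$), testing the equation against $\varphi_R$ and integrating in time yields
\begin{equation*}
\int_{\mathbb{H}^n} (u(t) - u_0)\varphi_R \diff\mu = -\int_0^t \mathcal{E}_J(u^m(\cdot, s), \varphi_R) \diff s = -\int_0^t \int_{\mathbb{H}^n} u^m \mathcal{L}\varphi_R \diff\mu \diff s,
\end{equation*}
the last equality being the duality identity $\mathcal{E}_J(f, g) = \int_{\mathbb{H}^n} f \,\mathcal{L}g \diff\mu$ valid for $g \in C_c^\infty(\mathbb{H}^n)$. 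As $R \to \infty$ the left-hand side tends to $\int_{\mathbb{H}^n} u(t) \diff\mu - \int_{\mathbb{H}^n} u_0 \diff\mu$ by dominated convergence, so it suffices to show the right-hand side vanishes.

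The heart of the matter is a two-scale bound on $\mathcal{L}\varphi_R$. By the Heisenberg dilation change of variable $z = \delta_{1/R} y$ combined with the kernel estimate $J \leq \Lambda |x^{-1}\cdot y|^{-Q-\alpha}$, one obtains $\norm{\mathcal{L}\varphi_R}_\infty \leq C R^{-\alpha}$; for $|x| \geq 4R$ the integrand in $\mathcal{L}\varphi_R(x)$ is supported in $B_{2R}$, giving the tail decay $|\mathcal{L}\varphi_R(x)| \leq C R^Q |x|^{-Q-\alpha}$. Interpolating yields $\norm{\mathcal{L}\varphi_R}_q \leq C R^{Q/q - \alpha}$ for every $q > Q/\alpha$. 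Hölder's inequality then gives
\begin{equation*}
\Bigl|\int_{\mathbb{H}^n} u^m \mathcal{L}\varphi_R \diff\mu\Bigr| \leq \norm{u^m}_{q'} \norm{\mathcal{L}\varphi_R}_q.
\end{equation*}
For $m \geq 1$ the pairing $(q', q) = (1, \infty)$ closes the estimate via $\norm{u^m}_1 \leq \norm{u_0}_\infty^{m-1} \norm{u_0}_1$. For $m^* < m < 1$ I would take $q' = 1/m$, $q = 1/(1-m)$, and use $\norm{u^m(\cdot, s)}_{1/m} = \norm{u(\cdot, s)}_1^m \leq \norm{u_0}_1^m$ (from Proposition 8); the exponent $Q/q - \alpha = Q(1-m) - \alpha$ is strictly negative precisely when $m > m^* = (Q-\alpha)/Q$.

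The main obstacle I anticipate is the sharpness of the threshold: the only $R$-uniform control on $u^m$ that survives the approximation comes from $\norm{u}_1$, forcing its natural $L^{1/m}$ placement, and $\mathcal{L}\varphi_R$ decays in $L^q$ exactly at rate $Q/q - \alpha$; the two rates match with strict negativity if and only if $m > m^*$. The borderline case $m = m^*$ therefore sits right outside this scheme and, as indicated in Remark 3 and Theorem 2, requires a finer cutoff or interpolation argument analogous to the Euclidean one.
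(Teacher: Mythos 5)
Your proposal is correct and follows essentially the same route as the paper: test against a dilated cutoff $\varphi_R$, use the scaling estimate $\norm{\mathcal{L}\varphi_R}_q \lesssim R^{Q/q-\alpha}$, close with H\"older placing $u^m$ in $L^1$ (via $\norm{u_0}_\infty^{m-1}\norm{u_0}_1$) when $m\geq 1$ and in $L^{1/m}$ (via $\norm{u}_1^m$) when $m^*<m<1$, and then remove the $L^\infty$ hypothesis by $L^1$-contraction along a bounded approximating sequence. The only cosmetic difference is that you state the $L^q$ bound on $\mathcal{L}\varphi_R$ only for $q>Q/\alpha$; it in fact holds for all $q\geq 1$ (the tail $R^Q|x|^{-Q-\alpha}$ is $q$-integrable for any $q\geq 1$), though the restriction costs nothing since you only invoke it at exponents where $Q/q-\alpha<0$.
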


\begin{proof}
    First we consider $u_0\in L^1(\mathbb{H}^n)\cap L^\infty(\mathbb{H}^n)$, then the corresponding solution $u\in L^1(\mathbb{H}^n)\cap L^\infty(\mathbb{H}^n)$. Choosing a non-negative and nonincreasing cutoff function $\psi(s)$ such that $\psi(s)=1$ for $0\leq s \leq 1$, $\psi(s)=0$ for $s\geq 2$. Define $\psi_R(x)=\psi(|x|/R)$. Employing $\psi_R$ as a test function in the weak formulation, we have
    $$\int _{\mathbb{H}^n}u(x,t)\psi_R(x)\diff \mu(x)-\int _{\mathbb{H}^n}u(x,0)\psi_R(x)\diff \mu(x)=-\int_0^t\int_{\mathbb{H}^n}u^m\mathcal{L}\psi_R\diff\mu(x)\diff t.$$
    By the scaling property of test functions, we have  
    \begin{equation*}
        \begin{aligned}
            \norm{\mathcal{L}\psi_R}_q\leq C R^{-\alpha+\frac{Q}{q}},~~\rm{for~}q\geq 1.
        \end{aligned}
    \end{equation*}
    Applying H\"older's inequality, we have
    $$|\int _{\mathbb{H}^n}u(x,t)\psi_R(x)\diff \mu(x)-\int _{\mathbb{H}^n}u(x,0)\psi_R(x)\diff \mu(x)|\leq C t R^{-\alpha+\frac{Q(p-1)}{p}}.$$ 
    Here $p=\max\{1,\frac{1}{m}\}$. 
    Since $m>m^*$, the exponent of $R$ is negative. Letting $R$ go to infinity, we derive the conservation of mass property.
    
    \noindent If $u_0\in L^1(\mathbb{H}^n)$, let $\{u_{0,k}\}\subset L^1(\mathbb{H}^n)\cap L^\infty(\mathbb{H}^n)$ be a sequence of functions converging to $u_0$ in $L^1(\mathbb{H}^n)$. Denote by $\{u_k\}$ the sequence of the solutions corresponding to the initial data $\{u_{0,k}\}$. The conservation of mass for bounded initial data gives us
    \begin{equation}
        \begin{aligned}
            \int _{\mathbb{H}^n}u_k(x,t)\diff \mu=\int_{\mathbb{H}^n}u_{0,k}(x,t)\diff \mu.\nonumber 
        \end{aligned}
    \end{equation}
    Passing to the limit, we get
    \begin{equation}
        \begin{aligned}
            \int _{\mathbb{H}^n}u(x,t)\diff \mu=\int_{\mathbb{H}^n}u_0(x,t)\diff \mu ~~\text{for} ~\text{each}~u_0\in L^1(\mathbb{H}^n).\nonumber 
        \end{aligned}
    \end{equation}
\end{proof}

\subsection{Conservation of mass if $m=m^*$}
If $m= m^*$, the exponent $-\alpha+\frac{Q(p-1)}{p}$ in the proof of Proposition 11 is zero and we can not obtain the conservation of mass by the above  proof. In order to overcome this difficulty, we split the integral $\frac{\diff }{\diff t}\int_{\mathbb{H}^n}u(x,t)\psi_R(x)\diff \mu(x)$ into two parts and estimate each part separately.
 
\begin{proposition}
    Let $m=m^*$, $0<\alpha<2$. $J$ satisfies assumption (1.3). For each $u_0\in L^1(\mathbb{H}^n)\cap L^p(\mathbb{H}^n)$ with $p>1$, the solution $u$ corresponding to the initial data $u_0$ satisfies the property, conservation of mass: $\int _{\mathbb{H}^n}u(x,t)\diff \mu=\int_{\mathbb{H}^n}u_0(x,t)\diff \mu$.
 \end{proposition}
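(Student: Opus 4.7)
The plan is to adapt the cutoff argument of Proposition 11, replacing the failing single Hölder step by a bulk/tail decomposition that exploits the $L^1$-tail of $u$. First I would reduce to the bounded-data case: approximate $u_0 \in L^1(\mathbb{H}^n)\cap L^p(\mathbb{H}^n)$ by $u_{0,k} \in L^1(\mathbb{H}^n)\cap L^\infty(\mathbb{H}^n)$ in $L^1$. By the $L^1$-contraction property (Proposition 3), the strong solutions $u_k$ converge to $u$ in $C([0,T]:L^1(\mathbb{H}^n))$, so it suffices to verify conservation of mass for bounded initial data and then pass to the limit.

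For $u_0 \in L^1\cap L^\infty$, testing against $\psi_R(x)=\psi(|x|/R)$ reduces the claim to showing $\int u^m \mathcal{L}\psi_R \diff\mu \to 0$ as $R\to\infty$, uniformly enough in $t$ for time integration. The key input is the scale-invariant estimate $\norm{\mathcal{L}\psi_R}_{Q/\alpha}\leq C$ uniformly in $R$, which follows from a direct computation using the pointwise bounds $|\mathcal{L}\psi_R|\leq CR^{-\alpha}$ on $|x|\leq 3R$ and $|\mathcal{L}\psi_R|\leq CR^Q|x|^{-Q-\alpha}$ on $|x|>3R$. The exponent $Q/\alpha = 1/(1-m^*)$ is precisely the conjugate of $1/m^*$; this borderline scaling is the reason $m=m^*$ is critical (for $m>m^*$ the corresponding norm decays, giving Proposition 11). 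Consequently the naive Hölder estimate with the pair $(L^{1/m^*},L^{Q/\alpha})$ gives only the uniform bound $|\int u^m \mathcal{L}\psi_R\diff\mu|\leq C\norm{u}_1^{m^*}$, bounded but not decaying.

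To extract decay I would split, for each $R_0>0$, $\int u^m \mathcal{L}\psi_R \diff\mu = I_1+I_2$ on $B_{R_0}$ and its complement. The pointwise bound gives $|I_1| \leq CR^{-\alpha}\int_{B_{R_0}}|u|^m\diff\mu$, which vanishes as $R\to\infty$ with $R_0$ fixed since the integral is finite for bounded $u$. Hölder on the complement yields $|I_2| \leq \norm{u^m}_{L^{1/m^*}(\{|x|>R_0\})}\norm{\mathcal{L}\psi_R}_{Q/\alpha} \leq C\bigl(\int_{|x|>R_0}|u|\diff\mu\bigr)^{m^*}$, which is small uniformly in $R$ for $R_0$ large by $L^1$-integrability of $u$. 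Given $\epsilon>0$, first choose $R_0$ with $|I_2|<\epsilon/2$, then $R$ with $|I_1|<\epsilon/2$. The uniform-in-$t$ bound $C\norm{u_0}_1^{m^*}$ from $L^1$-contraction together with dominated convergence in $t$ then pass the convergence to the time-integrated weak formulation and yield conservation of mass. The main obstacle is the scale-invariant estimate $\norm{\mathcal{L}\psi_R}_{Q/\alpha}\leq C$, which requires checking that both the interior ($|x|\leq 3R$) and far-field ($|x|>3R$) contributions to the $L^{Q/\alpha}$ integral are $R$-independent; this is the content of the ``two parts'' split suggested in the statement.
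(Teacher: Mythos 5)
Your proof is correct and follows essentially the same approach as the paper: both decompose the integral $\int u^m\mathcal{L}\psi_R\diff\mu$ into a bulk contribution on $B_{R_0}$ and a tail contribution on $B_{R_0}^c$, control the tail by H\"older with the critical conjugate pair $(1/m^*, Q/\alpha)$ (so that $\norm{\mathcal{L}\psi_R}_{Q/\alpha}$ is $R$-independent) together with $L^1$-smallness of $u$ outside $B_{R_0}$, and let $R\to\infty$ with $R_0$ fixed to kill the bulk term. The only (minor) difference is that you estimate the bulk via the pointwise bound $|\mathcal{L}\psi_R|\leq CR^{-\alpha}$, whereas the paper uses a H\"older estimate $\norm{u_1^m}_r\norm{\mathcal{L}\psi_R}_{r/(r-1)}$ with a free parameter $r\in[1,1/m^*)$ and exploits the exponent cancellation at $m=m^*$.
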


\begin{proof}
    For each given $\delta>0$, choose $R_0$ such that $\int _{\mathbb{H}^n}|u \cdot \chi_{\{|x|\geq R_0\}}|\leq \delta$. Then $u$ can be rewritten as $u=u_1+u_2$ with
    \begin{equation}
        \begin{aligned}
            u_1=u\cdot \chi_{\{|x|\leq R_0\}}, ~~u_2=u\cdot \chi_{\{|x|\geq R_0\}}.\nonumber
        \end{aligned}
    \end{equation}
    Observe that $u^m=u_1^m+u_2^m$. We begin with the bounded solution $u$ corresponding to the bounded initial data $u_0\in L^1(\mathbb{H}^n)\cap L^\infty(\mathbb{H}^n)$. 
    
    \noindent Multiplying the same test function $\psi_R(\cdot)$ as the proof of Proposition 11, we have
    \begin{equation}
        \begin{aligned}
            |\frac{\diff }{\diff t}\int_{\mathbb{H}^n}u(x,t)\psi_R(x)\diff \mu(x)|&=|\int _{\mathbb{H}^n}u_1^m\mathcal{L}\psi_R\diff\mu(x)+\int _{\mathbb{H}^n}u_2^m\mathcal{L}\psi_R\diff\mu(x)|\\
            &\leq |\int _{\mathbb{H}^n}u_1^m\mathcal{L}\psi_R\diff\mu(x)|+|\int _{\mathbb{H}^n}u_2^m\mathcal{L}\psi_R\diff\mu(x)|\\
            &:=I_1+I_2.\nonumber \\
        \end{aligned}
    \end{equation}
    By H\"older's inequality, we have
    \begin{equation}
        \begin{aligned}
            \norm{u_1^m}_r\leq \norm{u_1}_1^{m}|\{|x|\leq R_0\}|^{\frac{1-mr}{r}}\leq CR_0^{\frac{Q(1-mr)}{r}}~~\text{for}~\text{every}~1\leq r<1/m.\nonumber\\
        \end{aligned}
    \end{equation}
    Hereafter,
    \begin{equation}
        \begin{aligned}
            I_1\leq \norm{u_1^m}_r\norm{\mathcal{L}\psi_R}_{\frac{r}{r-1}}\leq CR_0^{\frac{Q(1-mr)}{r}}R^{-\alpha+\frac{Q(r-1)}{r}}.\nonumber
        \end{aligned}
    \end{equation}
    Observe that $\frac{Q(1-mr)}{r}=\alpha-\frac{Q(r-1)}{r}$ when $m=m^*=\frac{Q-\alpha}{Q}$, we obtain
    \begin{equation}
        \begin{aligned}
            I_1\leq C(R_0/R)^{\frac{Q(1-mr)}{r}}.\nonumber\\
        \end{aligned}
    \end{equation}
    For fixed $\delta$ and $R_0$, $I_1$ goes to zero by letting $R$ go to infinity. Furthermore, 
    \begin{equation*}
        \begin{aligned}
            \norm{u_2^m}_{\frac{1}{m}}=\norm{u_2}_1^m\leq \delta^m.
        \end{aligned}
    \end{equation*}
    Then we have,
    \begin{equation}
        \begin{aligned}
            I_2\leq \norm{u_2^m}_{\frac{1}{m}}\norm{\mathcal{L}\psi_R}_{\frac{1}{1-m}}\leq C\delta^m R^{0}=C\delta^m.\nonumber
        \end{aligned}
    \end{equation}
    Letting $R\rightarrow \infty$, we obtain $I_1+I_2\leq C\delta^m$. We conclude that 
    \begin{equation*}
        \begin{aligned}
            |\frac{\diff }{\diff t}\int_{\mathbb{H}^n}u(x,t)\psi_R(x)\diff \mu(x)|\rightarrow 0.
        \end{aligned}
    \end{equation*}
    by letting $\delta\rightarrow 0$. We obtain the conservation of mass with $u_0\in L^1(\mathbb{H}^n)\cap L^\infty(\mathbb{H}^n)$ if $m=m^*$. 

    \noindent If $u_0\in L^1(\mathbb{H}^n)\cap L^p(\mathbb{H}^n)$ with arbitrary $p>1$, let $\{u_{0,k}\}\subset L^1(\mathbb{H}^n)\cap L^\infty(\mathbb{H}^n)$ be a sequence of functions converging to $u_0$ in $L^1(\mathbb{H}^n)\cap L^p(\mathbb{H}^n)$. Denote by $\{u_k\}$ the sequence of the solutions corresponding to the initial data $\{u_{0,k}\}$. The conservation of mass for bounded initial data gives us
    \begin{equation}
        \begin{aligned}
            \int _{\mathbb{H}^n}u_k(x,t)\diff \mu=\int_{\mathbb{H}^n}u_{0,k}(x,t)\diff \mu.\nonumber 
        \end{aligned}
    \end{equation}
    Passing to the limit, we obtain
    \begin{equation}
        \begin{aligned}
            \int _{\mathbb{H}^n}u(x,t)\diff \mu=\int_{\mathbb{H}^n}u_0(x,t)\diff \mu ~~\text{for} ~\text{each}~u_0\in L^1(\mathbb{H}^n)\cap L^p(\mathbb{H}^n).\nonumber 
        \end{aligned}
    \end{equation}
 \end{proof}
 
 \subsection{Extinction if $m<m^*$}
 We have already proved that the mass of the solution does not change as $t\rightarrow \infty$ if $m\geq m^*$. On the other hand, for $m<m^*$, there is a finite extinction time $T>0$ such that $u(\cdot,T)\equiv0$ almost everywhere in $\mathbb{H}^n$, see an analogous result in \autocite{agf_dpa}.

\begin{proposition}
    Let $m<m^*$, $0<\alpha<2$, $J$ satisfies assumption (1.3). For every $u_0\in L^1(\mathbb{H}^n)\cap L^p(\mathbb{H}^n)$ with $p>\max\{1,(1-m)Q/\alpha\}$, there is a finite time $T>0$ of the corresponding solution $u$ such that $u(\cdot,T)\equiv0$ almost everywhere in $\mathbb{H}^n$.
\end{proposition}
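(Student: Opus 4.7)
The plan is to derive an autonomous differential inequality for $y(t) := \norm{u(\cdot,t)}_{p^*}^{p^*}$ of the form $y'(t) \le -C\, y(t)^{\beta}$ with $\beta < 1$, and conclude finite-time extinction by an ODE comparison argument. The relevant critical exponent is precisely $p^* = p^*(m) = (1-m)Q/\alpha$, which for $m < m^*$ satisfies $p^* > 1$; by $L^1$--$L^p$ interpolation with our hypothesis $p > p^*$, we obtain $u_0 \in L^{p^*}(\mathbb{H}^n)$, so that $y(0) < \infty$. Moreover, $y$ is non-increasing in time (Proposition 8), and by Proposition 9 the solution is instantly in $L^\infty$, which will justify the test-function manipulations below.

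First, one tests the equation (1.1) with $u^{p^*-1}$ (suitably truncated to be admissible) to obtain the energy identity
\begin{equation*}
    \frac{1}{p^*}\frac{d}{dt}\int_{\mathbb{H}^n}|u|^{p^*}\diff\mu + \mathcal{E}_J(u^m, u^{p^*-1}) = 0,
\end{equation*}
exactly in the spirit of the proof of Proposition 8. Applying the Stroock--Varopoulos inequality (Lemma 2, with $\Psi(v) \propto v^{(m+p^*-1)/2}$, noting that $m+p^*-1 = (1-m)(Q-\alpha)/\alpha > 0$), one gets
\begin{equation*}
    \mathcal{E}_J(u^m, u^{p^*-1}) \ge C(m, p^*)\,\mathcal{E}_J\bigl(u^{(m+p^*-1)/2},\, u^{(m+p^*-1)/2}\bigr).
\end{equation*}
Combined with the Hardy--Littlewood--Sobolev inequality (Lemma 3 with $r=2$), applied to $v = u^{(m+p^*-1)/2}$, and using the defining identity $(m+p^*-1)Q/(Q-\alpha) = p^*$, one concludes
\begin{equation*}
    \frac{d}{dt} y(t) \le -C\, \norm{u(\cdot,t)}_{p^*}^{m+p^*-1} = -C\, y(t)^{(Q-\alpha)/Q}.
\end{equation*}

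Setting $\beta = (Q-\alpha)/Q \in (0,1)$, the comparison ODE $y' = -C y^{\beta}$ is separable and yields $y(t)^{1-\beta} \le y(0)^{1-\beta} - C(1-\beta)t$, so $y$ reaches zero at some finite time
\begin{equation*}
    T \le \frac{y(0)^{1-\beta}}{C(1-\beta)} = \frac{\norm{u_0}_{p^*}^{p^*\alpha/Q}}{C\alpha/Q},
\end{equation*}
giving $u(\cdot,T) \equiv 0$ almost everywhere, and hence $u(\cdot,t) \equiv 0$ for all $t \ge T$ by the non-increasing property of the $L^{p^*}$ norm. The main technical obstacle is the rigorous justification of using $u^{p^*-1}$ as a test function when $p^*-1$ may be close to $0$ and $u$ may change sign or vanish on large sets; I would handle this by approximation, replacing $u^{p^*-1}$ by a smooth truncation such as $\varphi_{k,\varepsilon}(u) = \min((|u|+\varepsilon)^{p^*-1}\mathrm{sgn}(u), k)$, verifying the Stroock--Varopoulos-type lower bound for $\varphi_{k,\varepsilon}$, and passing to the limit $\varepsilon \to 0$, $k \to \infty$ using the strong solution property $\partial_t u \in L^\infty((\tau,\infty):L^1(\mathbb{H}^n))$ from Proposition 6 together with dominated convergence (so the time derivative term is handled), and Fatou's lemma on the Dirichlet-form side.
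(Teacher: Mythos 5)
Your proposal is correct and matches the paper's own proof almost line for line: test with $u^{p^*-1}$ at the critical exponent $p^*=(1-m)Q/\alpha$, apply Stroock--Varopoulos and the fractional Sobolev embedding, use the identity $(m+p^*-1)Q/(Q-\alpha)=p^*$ to close the system, and integrate the resulting ODE $y'\le -Cy^{(Q-\alpha)/Q}$. Your extra remarks on justifying $y(0)<\infty$ by interpolation and on truncating the test function go slightly beyond the paper's brevity but do not change the argument.
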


\begin{proof}
    From inequality (2.3), (2.5) and (4.5), we have
    \begin{equation}
        \begin{aligned}
            \frac{\diff }{\diff t}\int _{\mathbb{H}^n}|u|^p(x,t)\diff \mu(x)&\leq-p\mathcal{E}_J(u^m,u^{p-1})\\
            &\leq -C\mathcal{E}_J(u^{\frac{m+p-1}{2}},u^{\frac{m+p-1}{2}})\\
            &\leq -C\norm{u^{\frac{m+p-1}{2}}}_{\frac{2Q}{Q-\alpha}}^2=-C(\int_{\mathbb{H}^n}|u|^{\frac{(p+m-1)Q}{Q-\alpha}}\diff \mu)^{\frac{Q-\alpha}{Q}}.\nonumber
        \end{aligned}
    \end{equation}
    Observe that $p=\frac{(p+m-1)Q}{Q-\alpha}$ if $p=\frac{(1-m)Q}{\alpha}$. Define
    \begin{equation*}
        \begin{aligned}
            J(t)=\norm{u(\cdot,t)}_p^p,
        \end{aligned}
    \end{equation*}
    Then,
    \begin{equation}
        \begin{aligned}
            J'(t)+C J^{\frac{Q-\alpha}{Q}}(t)\leq 0. \nonumber\\
        \end{aligned}
    \end{equation}
    Hereafter, we derive the extinction of the solution in finite time since $J(0)$ is finite. 
 \end{proof}

\section{C$^\alpha$ regularity}
In this section, we prove continuity results of the constructed solutions to the equation (1.1). Instead, the equation (1.1) can be rewritten as 
\begin{equation*}
    \partial_t u^{1/m}+\mathcal{L}u=0.
\end{equation*}
In order to prove the continuity results, we will construct the iterated sequence of solutions with iterated nonlinearity functions in decreasing space-time cylinders. For instance, we will construct the following iterative functions:
\begin{equation*}
    \begin{aligned}
        u_k(x,t) = \frac{u(R^{-k}x, c_0 R^{-\alpha k}t) - \mu_k}{\omega_k}.
    \end{aligned}
\end{equation*}
Here the constants $R$, $\mu_k$, and $\omega_k$ will be chosen in the subsequent proof. Moreover, if the sequence of functions $\{u_k\}$ satisfies the specific consistent regularity,  we can obtain the Hölder modulus continuity of the solution $u$. 

Observe that the function $u_k$ is a solution to the following equation with the nonlinearity function $\beta_k(s)$,
\begin{equation*}
    \begin{aligned}
        \partial_t \beta_k(u)+\mathcal{L}u=0,
    \end{aligned}
\end{equation*}
 
\begin{equation*}
    \begin{aligned}
        \beta_k (s) = \frac{|\omega_k s + \mu_k|^{\frac{1}{m}-1}(\omega_k s + \mu_k)}{\omega_k \cdot c_0}.
    \end{aligned}
\end{equation*}

This naturally leads to the investigations of the following equations with the general nonlinearity functions $\beta(s)$:
\begin{equation}
    \begin{aligned}
        \partial_t \beta(u)+\mathcal{L}u=0,
    \end{aligned}
\end{equation}
Here,
\begin{equation*}
    \beta(s)=a|bs+c|^{\frac{1}{m}-1}(bs+c) ~~~\text{for}~ \text{some} ~a,b>0, ~s\in \mathbb{R}.
\end{equation*}

We begin with an energy inequality about the weak solution to the equation (7.1) by the weak formulation. First of all, we use the following equalities frequently on the Heisenberg group $\mathbb{H}^n$. These equalities are mentioned in Corollary 1.6 \autocite{sea_fg} using the polar coordinates on the nilpotent groups. 

\begin{equation}
    \int_{a\leq |x|\leq b}|x|^{\alpha-Q}\diff x=
        \left\{
            \begin{array}{ll}
                C_0\alpha^{-1}(b^{\alpha}-a^{\alpha})~~ &\text{if}~\alpha\neq0,  \\
                C_0\log(b/a)~~&\text{if} ~\alpha=0.\\
            \end{array}
        \right.
\end{equation}
Here $a\in \mathbb{C}$ and $0<a<b<\infty$.  

To obtain the energy inequality, we regard $\zeta=(u-\psi)_+$ as a test function in the weak formulation. Here $\psi$ is a nonnegative Lipschitz barrier function satisfying,
\begin{equation}
    \begin{aligned}
        \int _{|x^{-1} \cdot y|>1}|\psi(x)-\psi(y)|J(x,y)\diff y<C<\infty ~~\text{for} ~\text{every}~x\in \mathbb{H}^n.
    \end{aligned}
\end{equation}  

Recall that we have obtained $\partial _t \beta(u) \in L^\infty((\tau,\infty): L^1(\mathbb{H}^n))$ in Section 4. Hereafter, the function $\zeta$ is an admissible test function. Define the functional,
\begin{equation}
    \begin{aligned}
        \mathcal{B}_{\psi}(u)=\int _0^{(u-\psi)_+}\beta'(s+\psi)s\diff s.
    \end{aligned}
\end{equation}
Therefore, 
\begin{equation}
    \begin{aligned}
        \int _{\mathbb{H}^n}\mathcal{B}_\psi(u(x,t))\diff x|^{t_2}_{t_1}+\int _{t_1}^{t_2}\mathcal{E}_{J}(u,(u-\psi)_+)(t)\diff t=0.
    \end{aligned}
\end{equation}


In order to obtain a useful inverse Sobolev energy inequality, we estabish the next Lemma first to give an estimate of the functional $\mathcal{B}_{\psi}(u)$.
\begin{lemma}
    If $l=\inf \psi \geq 0$ and $M=\sup_{\{u\geq \psi\}} u<\infty$, the following inequality holds
        \begin{equation}
            \begin{aligned}
                \Lambda_1 (u-\psi)_+^{2}\leq \mathcal{B}_{\psi}(u) \leq \Lambda_2(u-\psi)_+.
            \end{aligned}
        \end{equation}
    Here $\Lambda_1=\frac{1}{2}\inf\limits_{l\leq s\leq M}\beta'(s)$ and $\Lambda_2=\beta(M)-\beta(l).$
\end{lemma}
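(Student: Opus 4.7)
The plan is to exploit the definition $\mathcal{B}_\psi(u) = \int_0^{(u-\psi)_+} \beta'(s+\psi)\,s\,ds$ together with monotonicity of $\beta$ and the two-sided control on the integration interval. Note first that $\beta(s) = a|bs+c|^{1/m-1}(bs+c)$ gives $\beta'(s) = (ab/m)|bs+c|^{1/m-1} \geq 0$, so $\beta$ is nondecreasing. Both inequalities are trivial on $\{u \leq \psi\}$ since both sides vanish, so I will work on $\{u > \psi\}$.

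The key geometric observation is the range of the argument of $\beta'$ inside the integral: for $s \in [0,(u-\psi)_+]$, the quantity $s+\psi$ ranges over $[\psi, u]$. Because $\psi \geq l$ by hypothesis and $u \leq M$ on the set $\{u \geq \psi\}$, this forces $s+\psi \in [l,M]$, which is exactly the interval on which $\Lambda_1$ and $\Lambda_2$ are defined.

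For the lower bound, I would bound $\beta'(s+\psi) \geq \inf_{[l,M]} \beta' = 2\Lambda_1$ pointwise under the integral and then compute the remaining $\int_0^{(u-\psi)_+} s\,ds = \tfrac{1}{2}(u-\psi)_+^2$, giving $\mathcal{B}_\psi(u) \geq \Lambda_1 (u-\psi)_+^2$. For the upper bound, I would instead pull the factor $s$ out by the crude estimate $s \leq (u-\psi)_+$, leaving
\begin{equation*}
\mathcal{B}_\psi(u) \leq (u-\psi)_+ \int_0^{(u-\psi)_+} \beta'(s+\psi)\,ds = (u-\psi)_+\bigl[\beta(u) - \beta(\psi)\bigr];
\end{equation*}
then monotonicity of $\beta$ combined with $l \leq \psi$ and $u \leq M$ yields $\beta(u) - \beta(\psi) \leq \beta(M) - \beta(l) = \Lambda_2$, concluding the proof.

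I do not anticipate a real obstacle here: the estimate is a direct mean-value bookkeeping, and the only subtlety is confirming that the integration variable $s+\psi$ stays in the interval $[l,M]$ where $\Lambda_1$ is taken, which follows immediately from the hypotheses $l = \inf\psi$ and $M = \sup_{\{u\geq\psi\}} u$. The monotonicity of $\beta$ used in the upper bound is automatic from the explicit form of $\beta$ and the assumption $a,b > 0$.
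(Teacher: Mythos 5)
Your proof is correct and takes essentially the same route as the paper: for the lower bound you freeze $\beta'(s+\psi)\geq 2\Lambda_1$ (valid because $s+\psi\in[l,M]$) and integrate $s$, and for the upper bound you pull out $s\leq(u-\psi)_+$ and use the fundamental theorem of calculus plus monotonicity of $\beta$ to get $\beta(u)-\beta(\psi)\leq\beta(M)-\beta(l)=\Lambda_2$. The only difference is that you spell out explicitly the range check $s+\psi\in[l,M]$ and the sign of $\beta'$, which the paper leaves implicit.
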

  
\begin{proof}
    First, we give the lower bound of $\mathcal{B}_{\psi}(u)$,
    \begin{equation*}
        \begin{aligned}
            \mathcal{B}_{\psi}(u) \geq 2 \Lambda_1 \int_0^{(u-\psi)_+}s\diff s = \Lambda_1(u-\psi)_+^2.
        \end{aligned}
    \end{equation*}
    Since $\beta'(s)\geq 0$ for all $s\in \mathbb{R}$, we have
    \begin{equation*}
        \begin{aligned}
            \mathcal{B}_{\psi}(u) \leq (u-\psi)_+ \cdot \int_{0}^{(u-\psi)_+} \beta'(s+\psi)\diff s\leq \Lambda_2(u-\psi)_+.
        \end{aligned}
    \end{equation*}
\end{proof}
  
Now are ready to prove the inverse Sobolev energy inequality, see the similar results on $\mathbb{R}^n$ in \autocites{rtfp_cl}{nfe_dpa}. Recall that $\mathcal{E}_J(f,f)^{\frac{1}{2}}$ is equivalent to the homogeneous fractional Sobolev norm $\norm{\mathscr{L}^{\alpha/4}f}_2$, we obtain the following energy estimates,
  
\begin{lemma}
    For any nonnegative Lipschitz barrier function $\psi$ satisfying (7.3). If $u$ is a weak solution to (7.1) satisfying $l=\inf\psi\geq 0$ and $M=\sup_{\{u\geq \psi\}} u<\infty$, then
    \begin{equation}
        \begin{aligned}
            \Lambda_1\norm{(u-\psi)_+(\cdot,t_2)}_{2}^{2}+ C_1&\int _{t_1}^{t_2}\norm {\mathscr{L}^{\alpha/4}((u-\psi)_+)(\cdot,t)}_2^2\diff t\\
            &\leq \Lambda_2\norm{(u-\psi)_+(\cdot,t_1)}_1+C_2\int _{t_1}^{t_2}z(t)\diff t,
        \end{aligned}
    \end{equation}
    Where $z(t)=\norm{(u-\psi)_+(\cdot,t)}_1+\norm{\mathbbm{1}_{\{u(\cdot,t)>\psi(\cdot)\}}}_1$. Notice that $C_1$ and $C_2$ are positive numbers which are independent of the choice of $\beta$. 
\end{lemma}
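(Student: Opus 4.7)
The plan is to set $\zeta=(u-\psi)_+$ in the weak formulation, which is admissible since Section~4 guarantees $\partial_t\beta(u)\in L^{\infty}_{\mathrm{loc}}(L^1)$ for equation (7.1). This choice produces identity (7.5). Applying the two-sided pointwise bound on $\mathcal{B}_\psi$ from the previous lemma to the boundary terms converts (7.5) into
\begin{equation*}
\Lambda_1\|(u-\psi)_+(\cdot,t_2)\|_2^2 + \int_{t_1}^{t_2}\mathcal{E}_J(u,(u-\psi)_+)(t)\,dt \leq \Lambda_2\|(u-\psi)_+(\cdot,t_1)\|_1.
\end{equation*}
So the bulk of the proof is to establish, pointwise in $t$, the lower bound
\begin{equation*}
\mathcal{E}_J(u,(u-\psi)_+) \geq C_1\|\mathscr{L}^{\alpha/4}(u-\psi)_+\|_2^2 - C_2\,z(t).
\end{equation*}

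Writing $u=(u-\psi)+\psi$ and using bilinearity,
\begin{equation*}
\mathcal{E}_J(u,(u-\psi)_+)=\mathcal{E}_J(u-\psi,(u-\psi)_+)+\mathcal{E}_J(\psi,(u-\psi)_+).
\end{equation*}
For the first piece I use the elementary pointwise inequality $(a-b)(a_+-b_+)\geq(a_+-b_+)^2$, applied to $a=(u-\psi)(x)$ and $b=(u-\psi)(y)$, to obtain $\mathcal{E}_J(u-\psi,(u-\psi)_+)\geq\mathcal{E}_J((u-\psi)_+,(u-\psi)_+)$; the norm equivalence (2.1) then yields a constant multiple of $\|\mathscr{L}^{\alpha/4}(u-\psi)_+\|_2^2$.

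The main obstacle is the cross term $\mathcal{E}_J(\psi,(u-\psi)_+)$, which I split by the diagonal distance. In the far region $\{d(x,y)>1\}$, the symmetry of $J$ lets one rewrite the corresponding piece as $\int(u-\psi)_+(x)\int_{d>1}(\psi(x)-\psi(y))J(x,y)\,dy\,dx$, whose absolute value is at most $C\|(u-\psi)_+\|_1$ by the barrier hypothesis (7.3); this accounts for the first piece of $z(t)$. In the near region $\{d(x,y)\leq1\}$, apply Young's inequality with a small parameter:
\begin{equation*}
\Bigl|\iint_{d\leq1}(\psi(x)-\psi(y))\bigl((u-\psi)_+(x)-(u-\psi)_+(y)\bigr)J\,dxdy\Bigr|\leq \epsilon\,\mathcal{E}_J((u-\psi)_+,(u-\psi)_+)+C_\epsilon\,I_\psi,
\end{equation*}
where $I_\psi$ is the near-diagonal integral of $(\psi(x)-\psi(y))^2J(x,y)$. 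For $\epsilon$ small the first term is absorbed into the main quadratic form. Crucially, $(u-\psi)_+(x)-(u-\psi)_+(y)=0$ off $\{x\in A\}\cup\{y\in A\}$ with $A:=\{u>\psi\}$, so only that portion of $I_\psi$ contributes. Using the Lipschitz bound $|\psi(x)-\psi(y)|\leq L\,d(x,y)$, the upper kernel bound $J\leq\Lambda d^{-Q-\alpha}$, and the polar formula (7.2),
\begin{equation*}
I_\psi \leq 2L^2\Lambda\int_A\int_{|z|\leq1}|z|^{2-Q-\alpha}\,dz\,dx \leq C\,|A|,
\end{equation*}
the inner integral being finite precisely because $\alpha<2$. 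This produces the $\|\mathbbm{1}_{\{u>\psi\}}\|_1$ piece of $z(t)$.

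The principal technical hurdles are the Young absorption and the localization of $I_\psi$: the first is automatic because the same kernel $J$ appears on both sides of Young, while the second follows from the pointwise observation that $(u-\psi)_+(x)-(u-\psi)_+(y)$ vanishes outside $\{x\in A\}\cup\{y\in A\}$. The constants $C_1,C_2$ are manifestly independent of $\beta$ because $\mathcal{B}_\psi$, the polar formula (7.2), and the kernel bounds (1.3) do not reference $\beta$. Integrating the resulting pointwise lower bound from $t_1$ to $t_2$ closes the argument and yields (7.7).
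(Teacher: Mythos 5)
Your proof is correct and takes essentially the same route as the paper: start from (7.5), apply the two-sided bound on $\mathcal{B}_\psi$, split the bilinear form into the coercive piece and the cross term with $\psi$, split the latter at $d(x,y)=1$, use (7.3) for the far part and a Young absorption for the near part, and localize the near-diagonal $\psi$-integral to $\{u>\psi\}$ using the vanishing of $(u-\psi)_+(x)-(u-\psi)_+(y)$. The only cosmetic difference is that you replace the paper's explicit three-way decomposition $(u-\psi)_+,(u-\psi)_-,\psi$ (and the observation that the sign of the $(u-\psi)_-$ cross term is favorable) with the single pointwise inequality $(a-b)(a_+-b_+)\geq(a_+-b_+)^2$, which is a slightly cleaner way to reach the same bound $\mathcal{E}_J(u-\psi,(u-\psi)_+)\geq\mathcal{E}_J((u-\psi)_+,(u-\psi)_+)$.
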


\begin{proof}
    From equation (7.5), we have
    \begin{equation*}
        \int _{\mathbb{H}^n}\mathcal{B}_\psi(u(x,t_2))\diff \mu(x)+\int _{t_1}^{t_2}\mathcal{E}_{J}(u,(u-\psi)_+)(t)\diff t = \int _{\mathbb{H}^n}\mathcal{B}_\psi(u(x,t_1))\diff x.
    \end{equation*}
    To estimate the term $\int _{t_1}^{t_2}\mathcal{E}_{J}(u,(u-\psi)_+)(t)\diff t$, we split it into three parts,
    \begin{equation*}
        \begin{aligned}
            \int _{t_1}^{t_2}\mathcal{E}_{J}(u,(u-\psi)_+)(t)\diff t  = & \int _{t_1}^{t_2}\mathcal{E}_{J}((u-\psi)_+,(u-\psi)_+)(t)\diff t \\
            &+ \int _{t_1}^{t_2}\mathcal{E}_{J}((u-\psi)_-,(u-\psi)_+)(t)\diff t \\
            & + \int _{t_1}^{t_2}\mathcal{E}_{J}(\psi,(u-\psi)_+)(t)\diff t : = I_1 + I_2 +I_3   
        \end{aligned}
    \end{equation*}
    Here $I_2\geq 0$. Furthermore, we have the following estimate of $\mathcal{E}_{J}(\psi,(u-\psi)_+)(t)$,
    \begin{equation*}
        \begin{aligned}
            \mathcal{E}_{J}(\psi,(u-\psi)_+)(t)  &=  \int_{\mathbb{H}^n} \int_{|x^{-1}\cdot y|\geq 1}[\psi(x)-\psi(y)][(u-\psi)_+(x)-(u-\psi)_+(y)]J(x,y)\\
            &~~~+ \int_{\mathbb{H}^n} \int_{|x^{-1}\cdot y|\leq 1}[\psi(x)-\psi(y)][(u-\psi)_+(x)-(u-\psi)_+(y)]J(x,y)\\
            &:= J_1 + J_2
        \end{aligned}
    \end{equation*}
    Since the function $\psi$ satisfies the condition (7.3), we have
    \begin{equation*}
        |J_1| \leq C \int_{\mathbb{H}^n} (u-\psi)_+(y)\diff \mu(y)
    \end{equation*}
    Notice that $|(u-\psi)_+(x)-(u-\psi)_+(y)|\leq |(u-\psi)_+(x)-(u-\psi)_+(y)|\cdot (\mathbbm{1}_{\{u(x,t)>\psi(x)\}} + \mathbbm{1}_{\{u(y,t)>\psi(y)\}})$, by Hölder inequality,
    \begin{equation*}
        \begin{aligned}
            |J_2| \leq & C(\epsilon)  \int_{\mathbb{H}^n}\int_{|x^{-1} \cdot y|\leq 1}[\psi(x)-\psi(y)]^2 J(x,y)\mathbbm{1}_{\{u(x,t)>\psi(x)\}} \diff \mu(y) \diff \mu(x) \\
            & + \epsilon \mathcal{E}_{J}((u-\psi)_+ , (u-\psi)_+) 
        \end{aligned}
    \end{equation*}
    Since $\psi$ is a Lipschitz function and the equivalence (2.1), we obtain the desired inequality. 
\end{proof}

Next, we prove the first De Giorgi type oscillation lemma: if u is mostly negative in time-space cylinder, the supreme goes down in the half cylinder. Denote the cylinder $\{|x|\leq R,-a\leq t\leq 0\}$ by $\Gamma_{R,a}$.

\begin{lemma}
    There is a constant $\delta(\beta)\in (0,1)$  depending on the choice of nonlinearity $\beta$ such that, for any $0<a\leq 1$,
    if $u:\mathbb{H}^n\times[-2,0]\rightarrow \mathbb{R}$ is a weak solution to equation (7.1) satisfying
        \begin{equation}
            \begin{aligned}
                u(x,t)\leq 1+(|x|^{\alpha/4}-1)_+~~~\text{in}~\mathbb{H}^n\times[-2,0],
            \end{aligned}
        \end{equation}
    and
        \begin{equation}
            \begin{aligned}
                |\{u>0\}\cap \Gamma_{2,2a}|\leq C\delta(\beta)^{1+\frac{Q}{\alpha}}a^{(1+\frac{Q}{\alpha})},
            \end{aligned}
        \end{equation}
    then 
        \begin{equation}
            \begin{aligned}
                u(x,t)\leq 1/2 ~~\text{if}~(x,t)\in \Gamma_{1,a}.
            \end{aligned}
        \end{equation}
    In the proof, we have an explicit bound of $\delta(\beta)$,
        \begin{equation}
            \begin{aligned}
                \delta(\beta)= \frac{1}{2}\frac{\inf\limits_{1/4\leq s\leq 2}\beta'(s)}{1+\beta(2)-\beta(1/4)}.
            \end{aligned}
        \end{equation}
\end{lemma}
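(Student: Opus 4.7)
The plan is to adapt the classical De Giorgi first oscillation lemma to the nonlocal Heisenberg setting, using the energy inequality of Lemma 8 and the fractional Sobolev embedding (Lemma 3). I will build nested sequences: truncation levels $L_k = 1/2 + 2^{-k-1}\searrow 1/2$, time heights $T_k = -a - a\,2^{-k}\searrow -a$, and radii $r_k = 1 + 2^{-k}\searrow 1$, together with barriers
$$\psi_k(x) = L_k + (|x|^{\alpha/4} - L_k)_+.$$
Each $\psi_k$ is Lipschitz, bounded below by $1/2$, bounded above by $1 + (|x|^{\alpha/4}-1)_+$, and satisfies the tail condition (7.3): the exponent $\alpha/4$ is strictly less than $\alpha$, so the kernel bound $J(x,y)\le \Lambda |x^{-1}\cdot y|^{-Q-\alpha}$ combined with the polar coordinates formula (7.2) makes $\int_{|x^{-1}\cdot y|>1}|\psi_k(x)-\psi_k(y)|J(x,y)\,d\mu(y)$ uniformly finite. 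On the set $\{u>\psi_k\}$ the hypothesis (7.7) forces $1/4 \leq \psi_k \leq u \leq 2$, so the constants of Lemma 7 satisfy $\Lambda_1\geq \tfrac12 \inf_{1/4\leq s\leq 2}\beta'(s)$ and $\Lambda_2\leq \beta(2)-\beta(1/4)$ uniformly in $k$, which is precisely what ties the final threshold to the quantity $\delta(\beta)$ in (7.11).

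Define the De Giorgi energy
$$U_k = \sup_{t\in[T_k,0]}\|(u-\psi_k)_+(\cdot,t)\|_2^2 \;+\; \int_{T_k}^{0}\|\mathscr{L}^{\alpha/4}(u-\psi_k)_+\|_2^2\,dt,$$
and the super-level set $A_k = \{(x,t):T_k\le t\le 0,\ u(x,t)>\psi_k(x)\}$. Applying Lemma 8 on $[t_1,t_2]$ with $t_1\in[T_{k-1},T_k]$ chosen (by the mean value in time) so that $\|(u-\psi_k)_+(\cdot,t_1)\|_1$ is controlled by its average over $[T_{k-1},T_k]$, and taking the supremum in $t_2\in[T_k,0]$, I obtain
$$\Lambda_1 U_k \leq C\,2^{k}\!\int_{T_{k-1}}^{0}\!\!\int_{\mathbb{H}^n}(u-\psi_{k-1})_+\,d\mu\,dt \;+\; C_2\!\int_{T_{k-1}}^{0}\!\!\bigl(\|(u-\psi_{k-1})_+\|_1+|A_{k-1}(t)|\bigr)\,dt,$$
where the factor $2^k$ comes from converting $(u-\psi_{k-1})_+\geq (L_{k-1}-L_k)\mathbf{1}_{\{u>\psi_k\}} = 2^{-k-1}\mathbf{1}_{\{u>\psi_k\}}$. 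Interpolating the supremum-in-time bound on $\|(u-\psi_{k-1})_+\|_2$ against the $L^2_t L^{2Q/(Q-\alpha)}_x$ bound furnished by Lemma 3 gives
$$\int_{T_{k-1}}^{0}\!\!\int (u-\psi_{k-1})_+^{2(Q+\alpha)/Q}\,d\mu\,dt \leq C\,U_{k-1}^{1+\alpha/Q},$$
and Chebyshev's inequality at the gap $L_{k-1}-L_k = 2^{-k-1}$ bounds $|A_k|$ by a similar power. Combining these yields the standard nonlinear recursion
$$Y_k := U_k + |A_k| \leq C^{\,k}\bigl(\Lambda_2/\Lambda_1\bigr)\,Y_{k-1}^{\,1+\alpha/Q}.$$

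Iterating this recursion forces $Y_k\to 0$ provided $Y_0$ is smaller than a constant multiple of $\bigl(\Lambda_1/\Lambda_2\bigr)^{1+Q/\alpha}$, which, after absorbing the error $C_2 z(t)$ from the Lipschitz barrier into the denominator as $(1+\Lambda_2)$, is exactly the threshold $\delta(\beta)^{1+Q/\alpha}$ with $\delta(\beta)$ as in (7.11). Hypothesis (7.8) supplies precisely this smallness for $|A_0|$; the upper-bound hypothesis (7.7) together with a single application of Lemma 8 over $[-2,0]$ supplies it for $U_0$ (possibly after shrinking the universal constant in (7.8)). Once $Y_k\to 0$, we conclude $(u-1/2)_+\equiv 0$ on $\Gamma_{1,a}$, which is the desired conclusion (7.9). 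The main obstacle is the bookkeeping that produces the explicit ratio $\delta(\beta) = \tfrac12\inf\beta'/(1+\beta(2)-\beta(1/4))$: the coercivity constant $\Lambda_1$ enters on the left-hand side of the energy inequality at every step, while the right-hand side picks up both $\Lambda_2$ (through the estimate $\mathcal{B}_\psi(u)\leq \Lambda_2(u-\psi)_+$) and an additive Lipschitz-barrier error, so one must keep these three contributions aligned over the full iteration rather than merging them into a generic constant. The secondary subtlety is the choice of the exponent $\alpha/4$ in (7.7): it is the largest exponent compatible with the tail condition (7.3) for the barriers $\psi_k$ under the kernel bound in (1.3), and any larger exponent would make the term $J_1$ in Lemma 8 unbounded.
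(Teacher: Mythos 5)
Your proposal captures the De Giorgi iteration skeleton (energy inequality, Sobolev embedding, Chebyshev, nonlinear recursion, tracking $\Lambda_1,\Lambda_2$ to extract $\delta(\beta)$), but it is missing the key bridge that the paper uses, and this is not a cosmetic gap.

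\textbf{The missing scaling argument.} You claim that hypothesis (7.9) on $|\{u>0\}\cap\Gamma_{2,2a}|$ directly supplies the smallness of the starting energy $U_0$ (or $U_1$). This does not work: the barrier you use, $\psi_k(x)=L_k+(|x|^{\alpha/4}-L_k)_+$, has the \emph{same} growth exponent $\alpha/4$ as the a priori upper bound (7.8), so on $|x|\geq 2$ nothing prevents $u$ from exceeding $\psi_k$ on a set of arbitrarily large (even infinite) measure. The measure hypothesis only controls $\{u>0\}$ inside $\Gamma_{2,2a}$; the tail of $A_0$ outside $B_2$ is uncontrolled, and your $U_0$ could be infinite. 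The paper resolves this by running the De Giorgi iteration with the \emph{strictly larger} barrier exponent $\alpha/2$, i.e.\ $\psi_{L_k}=L_k+(|x|^{\alpha/2}-1)_+$, so that $\{u>\psi_{L_k}\}$ is contained in a bounded ball, and the iteration yields an implication of the form ``if $\iint_{\mathbb{H}^n\times[-2a,0]}(u-(|x|^{\alpha/2}-1)_+)_+$ is small enough (condition (7.13)), then $u\leq 1/2$ on $\Gamma_{1,a}$.'' The measure hypothesis (7.9) is then translated into (7.13) via a zoom: for each $(x_0,t_0)\in\Gamma_{1,a}$ one considers $u_R(x,t)=u(x_0\cdot R^{-1}x,\,t_0+R^{-\alpha}t)$ for $R$ large, uses that the rescaled $\alpha/4$-growth is dominated by the $\alpha/2$ barrier for $|x|\geq R$, and hence confines the integral in (7.13) to $B_R(0)$, which scales back to $B_1(x_0)\subset B_2$. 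The factor $R^{Q+\alpha}$ produced by the change of variables exactly matches the size of the shrunken space-time cylinder, which is why the $a^{1+Q/\alpha}$ power appears in the final threshold. Without this step there is no way to go from the local measure condition to control of the global energy, so your argument does not close.

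\textbf{Two secondary points.} First, your claim that $\alpha/4$ is ``the largest exponent compatible with the tail condition (7.3)'' is not right: any exponent strictly below $\alpha$ makes $\int_{|z|>1}|z|^{\beta}|z|^{-Q-\alpha}\,d\mu(z)$ finite, and the paper deliberately uses $\alpha/2$ precisely because it beats the a priori growth $\alpha/4$ and hence localizes the super-level sets. Second, your levels $L_k=1/2+2^{-k-1}$ decrease from $1$ to $1/2$, which would make the super-level sets, and hence $U_k$, grow rather than shrink; with that choice and hypothesis (7.8) you start from $U_0=0$ and the recursion runs in the wrong direction. The paper takes $L_k=\tfrac12(1-2^{-k})$ increasing from $0$ to $1/2$, so $U_k$ is decreasing and the nonlinear recursion can be iterated. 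Fixing the direction is easy, but the rescaling step is essential and must be added.
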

\begin{proof}
    We begin with establishing a nonlinear recurrence relation to the following energy quantity
    \begin{equation}
        \begin{aligned}
            U_k=\sup_{t\in[-T_k,0]}\int_{\mathbb{H}^n}(u-\psi_{L_k})_+^2(t,x)\diff \mu(x)+\int _{T_k}^0 \norm {\mathscr{L}^{\alpha/4}((u-\psi_{L_k})_+)(\cdot,t)}_2^2\diff t
         \end{aligned}
    \end{equation}
    where $L_k=\frac{1}{2}(1-\frac{1}{2^k})$, $\psi_{L_k}=L_k+(|x|^{\alpha/2}-1)_+$ and $T_k= (-1-\frac{1}{2^k}) a $. Using $u_k(t,x)$ to denote $(u-\psi_{L_k})_+(t,x)$. Since $u\geq \psi_{L_k}$, we take $M=2$ in Lemma 7. Furthermore, due to the fact that $L_k\geq 1/4$, we take $l=1/4$ in Lemma 7. For $T_{k-1}\leq \sigma\leq T_k\leq t\leq 0$, by the energy inequality (7.7) in Lemma 8, we have 
    \begin{equation}
        \begin{aligned}
            \Lambda_1\norm{(u-\psi_k)_+(\cdot,t)}_{2}^{2}+ C_1&\int _{\sigma}^{t}\norm {\mathscr{L}^{\alpha/4}((u-\psi_k)_+)(\cdot,s)}_2^2\diff s\\
            &\leq \Lambda_2\norm{(u-\psi_k)_+(\cdot,\sigma)}_1+C_2\int _{\sigma}^{t}z(s)\diff s.\nonumber
        \end{aligned}
    \end{equation}
    By taking the average of $\sigma\in [T_{k-1},T_k]$, and taking the supreme over $t\in [T_k,0]$, we obtain
    \begin{equation}
        \begin{aligned}
            U_k\leq C 2^{k}\frac{1+\Lambda_2}{\Lambda_1 \cdot a}\int_{T_{k-1}}^0z(s)\diff s.\nonumber
        \end{aligned}
    \end{equation}
    Using the inequality (2.4) and Interpolation inequality, we get
    \begin{equation}
        \begin{aligned}
            \norm{u_k}_{L^{2(1+\frac{\alpha}{Q})}(\mathbb{H}^n\times[T_{k},0])}\leq C U_k^{\frac{1}{2}}.\nonumber
        \end{aligned}
    \end{equation}
    Now we deal with the controlling term $\int_{T_{k-1}}^0z(s)\diff s$. By Tchebychev inequality, we have
    \begin{equation}
        \begin{aligned}
            \int_{T_{k-1}}^0z(s)\diff s &\leq \int\int _{\mathbb{H}^n\times[T_{k-1},0]}u_{k-1}\chi_{\{u_{k-1}>\frac{1}{2^{k+1}}\}}+\int\int_{\mathbb{H}^n\times[T_{k-1},0]}\chi_{\{u_{k-1}>\frac{1}{2^{k+1}}\}} \\
            &\leq C(2^{2+\frac{2\alpha}{Q}})^k U_{k-1}^{1+\frac{\alpha}{Q}}.\nonumber
        \end{aligned}
    \end{equation}
    Therefore, we obtain
    \begin{equation}
        \begin{aligned}
            U_k\leq \frac{1+\Lambda_2}{\Lambda_1 \cdot a}(C_{Q,\Lambda,\alpha})^kU_{k-1}^{1+\frac{\alpha}{Q}}.\nonumber \\
        \end{aligned}
    \end{equation}
    If $ (\frac{1+\Lambda_2}{\Lambda_1 \cdot a})^{Q/\alpha}U_1\leq \epsilon_0(Q,\Lambda,\alpha) $, then $\lim\limits_{k\rightarrow\infty}U_k=0$ which gives  $u(x,t)\leq 1/2 ~~\text{if}~(x,t)\in \Gamma_{1,a}$. By Tchebychev inequality, we get
    \begin{equation}
        \begin{aligned}
            U_1\leq C_{Q,\Lambda,\alpha} \frac{1+\Lambda_2}{\Lambda_1 \cdot a}\int\int_{\mathbb{H}^n\times[-2a,0]}(u-(|x|^{\alpha/2}-1)_+)_+.\nonumber\\
        \end{aligned}
    \end{equation}
    To guarantee that $ (\frac{1+\Lambda_2}{\Lambda_1 \cdot a})^{Q/\alpha}U_1\leq \epsilon_0(Q,\Lambda,\alpha) $, we only need to impose the following condition,
    \begin{equation}
        \begin{aligned}
            \int\int_{\mathbb{H}^n\times[-2a,0]}(u-(|x|^{\alpha/2}-1)_+)_+\leq\epsilon_0(Q,\Lambda,\alpha)(\frac{\Lambda_1 \cdot a}{1+\Lambda_2})^{1+\frac{Q}{\alpha}}. 
        \end{aligned}
    \end{equation}
    The condition (7.13) can be connected to the condition (7.9) in the Lemma 9 through a scaling argument. For each point $(x_0,t_0)$ in $\Gamma_{1,a}$, considering the following function $u_R(x,t)$,
    \begin{equation*}
        \begin{aligned}
            u_R(x,t) = u(x_0 \cdot R^{-1}x, t_0 + R^{-\alpha}t), ~~~t\in[-2,0] ~\text{by}~\text{requiring}~R^{\alpha}>2.
        \end{aligned}
    \end{equation*}
    which is a solution to equation (7.1) corresponding to the rescaled kernel 
    \begin{equation*}
        \begin{aligned}
            J_R(x,y) = R^{-(Q+\alpha)} J(x_0 \cdot R^{-1}x, x_0^{-1} \cdot R^{-1}x)
        \end{aligned}
    \end{equation*}
    
    We claim that the rescaled solution $u_R$ satisfies (7.13). Notice that there exists large enough $R$ such that $1+((|R^{-1}x|+1)^{\alpha/4}-1)_+\leq (|x|^{\alpha/2}-1)_+$ for $|x|\geq R$. Therefore,
    \begin{equation*}
        \begin{aligned}
            \int\int_{\mathbb{H}^n\times[-2a,0]}(u_R-(|x|^{\alpha/2}-1)_+)_+ &= \int_{-2a}^0\int_{B_R(0)} {(u_R)}_+ \\
            &= R^{Q+\alpha}\int_{t_0 - 2aR^{-\alpha}}^{t_0}\int_{B_1(x_0)}u(x,t)_+ \diff \mu(x)\diff t.
        \end{aligned}
    \end{equation*} 
    Observe that $B_1(x_0)\times[t_0-2aR^{-\alpha},t_0]\subset \Gamma_{2,2a}$ if $R^{\alpha}>2$ and $(x_0,t_0)\in\Gamma_{1,a}$. Furthermore, $u(x,t)_+\leq 2^{\alpha/4}$ where $(x,t)\in \Gamma_{2,a}$. Hereafter,
    \begin{equation*}
        \begin{aligned}
            \int\int_{\mathbb{H}^n\times[-2a,0]}(u_R-(|x|^{\alpha/2}-1)_+)_+ &\leq C 2^{\alpha/4}R^{Q+\alpha}|\{u>0\}\cap\Gamma_{2,2a}| \\
            &\leq \epsilon_0(Q,\Lambda,\alpha)(\frac{\Lambda_1 \cdot a}{1+\Lambda_2})^{1+\frac{Q}{\alpha}}.
        \end{aligned}
    \end{equation*}
    by requiring
    \begin{equation*}
        \begin{aligned}
            |\{u>0\}\cap\Gamma_{2,2a}|\leq \epsilon_1(Q,\Lambda,\alpha) (\frac{\Lambda_1 \cdot a}{1+\Lambda_2})^{1+\frac{Q}{\alpha}} = c(Q,\Lambda,\alpha)\delta(\beta)^{1+\frac{Q}{\alpha}} \cdot a ^{1+\frac{Q}{\alpha}}
        \end{aligned}
    \end{equation*}
    where  $\delta(\beta)=\frac{\Lambda_1}{1+\Lambda_2}= \frac{\frac{1}{2}\inf\limits_{1/4\leq s\leq 2}\beta'(s)}{1+\beta(2)-\beta(1/4)}$ is given in the proof. Therefore, we obtain $u(x_0,t_0)\leq 1$ for each $(x_0,t_0)\in \Gamma_{1,a}$.
\end{proof}

\begin{remark}
    If u is mostly positive in the cylinder $\Gamma_{2,2^{\alpha}}$, applying  Lemma 9 to $-u$ with $\Tilde{\beta}(s)=-\beta(-s)$, we obtain  that the infimum has a upper bound in $\Gamma_{1,1}$ with the explicit bound $\delta(\Tilde{\beta})$.
\end{remark}

\begin{remark}
    We observe that the constant $\delta(\beta)$ depends on the choice of $\beta$. For instance, if $\inf\limits_{1/4\leq s\leq 2}\beta'(s)\rightarrow 0$, then $\delta(\beta)\rightarrow 0$. Hereafter, the condition (7.9) becomes harder to achieve.  
\end{remark}

The next step is to prove second De Giorgi Lemma which says some mass is lost between two level sets. Define 
\begin{equation}
    \begin{aligned}
    \psi_{\lambda}(x)=((|x|-\lambda^{-4/\alpha})_+^{\alpha/4}-1)_+, ~~\lambda\in (0,\frac{1}{3}),~x\in \mathbb{H}^n.
    \end{aligned}
\end{equation}
which can be used to control the growth at infinity.
\begin{equation}
    \begin{aligned}
    F(x)=\sup(-1,\inf(0,|x|^2-9)),~~x\in \mathbb{H}^n.
    \end{aligned}
\end{equation}
which can be used to localize the problem in the ball $B_3$. We follow the ideas in \autocite{rtfp_cl} to obtain the following lemma, 
\begin{lemma}
    Assume $C_1\leq \beta'(s)\leq C_2$ for every ${1}/{2}\leq s\leq 2$. Assume $0<a<1$. For each very small $\mu,\nu>0$, there exists $\Bar{\lambda}\in (0,\frac{1}{3})$ depending on $\mu,\nu,\alpha,a,Q,C_1,C_2$ such that for any $\lambda\in (0,\Bar{\lambda})$ and solution $u:\mathbb{H}^n\times[-2,0]\rightarrow \mathbb{R}$ to equation (7.1) satisfying
    \begin{equation}
        \begin{aligned}
            u(x,t)\leq 1+\psi_{\lambda}(x) ~~\text{in}~\mathbb{H}^n\times[-2,0] ~~\text{and}~~|\{u<0\}\cap (B_2\times(-2,-2a))|>\mu,\\
        \end{aligned}
    \end{equation}
  We have the following implication: If
  \begin{equation}
      \begin{aligned}
      |\{u>1+\lambda^2F\}\cap(B_3\times(-2a,0))|\geq \nu,
      \end{aligned}
  \end{equation}
  then
  \begin{equation}
      \begin{aligned}
      |\{1+F<u<1+\lambda^2F\}\cap(B_3\times(-2,0))|\geq \gamma(\mu,\nu,\alpha,a,Q,C_1,C_2). 
      \end{aligned}
  \end{equation}
\end{lemma}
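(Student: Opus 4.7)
I will argue by contradiction, adapting the second De Giorgi oscillation lemma from the Euclidean setting of \autocite{rtfp_cl} to the Heisenberg group. Suppose that $|\{1+F<u<1+\lambda^2F\}\cap(B_3\times(-2,0))|<\gamma$ for $\gamma$ to be chosen small enough to force a contradiction, where $\lambda$ has already been sent below some threshold $\bar\lambda$.

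First, I would apply the energy inequality from Lemma 8 with barrier $\psi = 1 + \lambda^2 F$, glued at infinity to $\psi_\lambda$ so that the tail condition (7.3) holds uniformly in $\lambda$ (this is possible because $\psi_\lambda$ vanishes on the huge ball $B_{\lambda^{-4/\alpha}}$ and grows at the slow rate $|x|^{\alpha/4}$). Since the hypothesis $C_1\leq \beta'(s)\leq C_2$ on $[1/2,2]$ makes $\Lambda_1,\Lambda_2$ comparable to $C_1,C_2$ on the effective range of $u$, Lemma 8 yields a $\lambda$-uniform bound
\[
\sup_{t\in[-2,0]}\|(u-\psi)_+(\cdot,t)\|_2^2 + \int_{-2}^0 \|\mathscr{L}^{\alpha/4}(u-\psi)_+(\cdot,t)\|_2^2\,\diff t \leq C(\mu,\nu,a,C_1,C_2,Q,\alpha).
\]

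Next, I would translate the two hypotheses into quantitative smallness of $(u-\psi)_+$ at early times and largeness at late times. By pigeonhole applied to $|\{u<0\}\cap(B_2\times(-2,-2a))|>\mu$, there exists a time slab on which $w(\cdot,t):=(u(\cdot,t)-(1+F))_+$ vanishes on a set of measure $\geq c(\mu)$ inside $B_2$. Similarly, the hypothesis $|\{u>1+\lambda^2F\}\cap(B_3\times(-2a,0))|\geq \nu$ produces a set of time slices at which $w\geq \lambda^2$ on a set of measure $\geq c(\nu)$ inside $B_3$. The key analytic input is a De Giorgi-type nonlocal isoperimetric inequality on $\mathbb{H}^n$: for $w\in \dot{\mathcal{H}}_{\mathcal{L}}(\mathbb{H}^n)$ with $|\{w=0\}\cap B_3|\geq A$ and $|\{w\geq \delta\}\cap B_3|\geq B$, there is a lower bound
\[
|\{0<w<\delta\}\cap B_3|\geq c(A,B,\delta,Q,\alpha)\,\|\mathscr{L}^{\alpha/4}w\|_2^{-\kappa},
\]
for some $\kappa>0$. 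This follows from the Hardy--Littlewood--Sobolev inequality (Lemma 3) applied to $w_\delta := \min(w,\delta)$, noting that the $L^{r_1}$ norm of $w_\delta$ is forced to be large by the pair $A,B$ while its fractional Sobolev norm is controlled by that of $w$ up to the intermediate level-set measure. Combining this with the energy bound and integrating in $t$ contradicts the assumed smallness of the intermediate-level-set measure once $\gamma$ is taken sufficiently small depending on the data.

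The main obstacle is the nonlocal isoperimetric inequality on $\mathbb{H}^n$: unlike the Euclidean case, the proof must use the homogeneous dimension $Q$, the Kor\'anyi balls, and the noncommutative group structure through Lemma 3 rather than the Euclidean fractional Sobolev inequality. A secondary technical issue is choosing $\bar\lambda$ small enough so that the tail contribution introduced by the barrier $\psi$ (and by $u\leq 1+\psi_\lambda$ at infinity) is dominated by the main energy; this will dictate the dependence $\bar\lambda=\bar\lambda(\mu,\nu,\alpha,a,Q,C_1,C_2)$ in the statement.
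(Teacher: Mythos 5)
Your proposal takes a genuinely different route from the paper, and unfortunately the key analytic step you invoke does not go through as sketched. You propose to reduce the lemma to a \emph{static} nonlocal De Giorgi isoperimetric inequality of the form
\[
|\{0<w<\delta\}\cap B_3|\geq c(A,B,\delta,Q,\alpha)\,\|\mathscr{L}^{\alpha/4}w\|_2^{-\kappa},
\]
and you claim this follows from the Hardy--Littlewood--Sobolev inequality (Lemma 3) applied to $w_\delta=\min(w,\delta)$. But tracing through HLS, all you get is $\|w_\delta\|_{r_1}\leq C\|\mathscr{L}^{\alpha/4}w_\delta\|_2$, which combined with $|\{w\geq\delta\}\cap B_3|\geq B$ gives a \emph{lower} bound on $\|\mathscr{L}^{\alpha/4}w_\delta\|_2$. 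Nowhere does the measure of the intermediate set $\{0<w<\delta\}$ enter the inequality. Unlike the local De Giorgi isoperimetric lemma, where the gradient is concentrated on intermediate level sets (so $\|\nabla u\|_{L^1(\{a<u<b\})}$ appears), the homogeneous fractional norm $\|\mathscr{L}^{\alpha/4}w_\delta\|_2$ is not localized: it can be large even when $\{0<w_\delta<\delta\}$ has arbitrarily small measure (think of $w$ jumping across a thin annulus). Establishing a fractional isoperimetric inequality of this kind requires a separate argument (e.g.\ via an extension technique or a careful interaction estimate), which your sketch does not supply — the lemma cannot be reduced to HLS alone.

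The paper's actual proof avoids the need for any static isoperimetric inequality by exploiting the \emph{time evolution}. It works with three barriers $\phi_0,\phi_1,\phi_2$ (using the intermediate level $\phi_1=1+\psi_\lambda+\lambda F$, not $1+\lambda^2F$), plugs the middle barrier into the energy identity (7.5), and crucially does \emph{not} discard the cross term $\int\mathcal{E}_J((u-\phi_1)_-,(u-\phi_1)_+)\diff t$. The energy estimate gives $H'(t)\leq C\lambda^2$ for $H(t)=\int_{\mathbb{H}^n}\mathcal{B}_{\phi_1}(u(x,t))\diff\mu(x)$, together with $\int\mathcal{E}_J((u-\phi_1)_-,(u-\phi_1)_+)\leq CC_2\lambda^2$. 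The hypothesis on $\{u<0\}$ forces $H$ to be small at many early times (via the cross term), while the hypothesis $|\{u>1+\lambda^2F\}|\geq\nu$ forces $H$ to be large at some late time $T_0$. Since $H$ grows slowly, it must spend a time interval of length $\gtrsim C_1\nu^3$ in an intermediate range; the cross term again controls the set of bad times in this interval, and on the remaining good times $u$ sits strictly between $\phi_0$ and $\phi_2$ on a large set. This dynamical argument (following \autocite{rtfp_cl}) is the correct substitute for the isoperimetric step, and it is the idea your proposal is missing.
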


\begin{proof}
    Define
    \begin{equation*}
        \begin{aligned}
            \phi_0 (x)= 1 + \psi_\lambda(x) + F(x),
        \end{aligned}
    \end{equation*}
    \begin{equation*}
        \begin{aligned}
            \phi_1 (x)= 1 + \psi_\lambda(x) + \lambda F(x),
        \end{aligned}
    \end{equation*}
    \begin{equation*}
        \begin{aligned}
            \phi_2 (x)= 1 + \psi_\lambda(x) + \lambda^2 F(x).
        \end{aligned}
    \end{equation*}
    Notice that the barrier function $\phi_1$ is an intermediate state between $\phi_0$ and $\phi_2$, we expect these barrier functions to provide a quantitative picture of the loss of mass between the level sets. 

    \noindent\textbf{Energy inequality:} Recall the equation (7.5) and regard the barrier function $\phi_1$ as an admissible test function, for $-2\leq t_1\leq t_2\leq 0$, we have
    \begin{equation*}
        \begin{aligned}
            \int _{\mathbb{H}^n}\mathcal{B}_{\phi_1}(u(x,t_2))\diff \mu(x) \Big|_{t_1}^{t_2}+\int _{t_1}^{t_2}\mathcal{E}_{J}(u,(u-\phi_1)_+)(t)\diff t = 0.
        \end{aligned}
    \end{equation*}
    Hereafter,
    \begin{equation}
        \begin{aligned}
            \int _{\mathbb{H}^n}\mathcal{B}_{\phi_1}(u(x,t_2))&\diff \mu(x) \Big|_{t_1}^{t_2} + \int _{t_1}^{t_2}\mathcal{E}_{J}((u-\phi_1)_+,(u-\phi_1)_+)(t)\diff t\\ 
            &+ \int _{t_1}^{t_2}\mathcal{E}_{J}((u-\phi_1)_-,(u-\phi_1)_+)(t)\diff t 
            = -\int _{t_1}^{t_2}\mathcal{E}_{J}(\phi_1,(u-\phi_1)_+)(t)\diff t.
        \end{aligned}
    \end{equation}
    In the proof of the inverse Sobolev energy inequality in Lemma 8, we neglect the nonnegative term $\int _{t_1}^{t_2}\mathcal{E}_{J}((u-\phi_1)_-,(u-\phi_1)_+)(t)\diff t $. However, controlling this term is important in the following proof. Hence, we keep this term in the above equation.  

    Now we estimate the right hand side of the above equation. Observe that $u(x) > \phi_1$ implies $x \in B_3$.  we have
    \begin{equation*}
        \begin{aligned}
            |\int _{t_1}^{t_2}\mathcal{E}_{J}(\phi_1,(u-\phi_1)_+)(t)\diff t| 
            \leq &\frac{1}{2} \int _{t_1}^{t_2}\mathcal{E}_{J}((u-\phi_1)_+,(u-\phi_1)_+)(t)\diff t \\
            & +2 \int_{t_1}^{t_2}\int_{B_3}\int_{\mathbb{H}^n} [\phi_1(x) - \phi_1(y)]^2 J(x,y) \diff \mu(y)\diff \mu(x)\diff t\\
            & := J_1 + J_2.
        \end{aligned}
    \end{equation*}
    The first term $J_1$ can be absorbed into the left hand side of the equation (7.19). Due to the definition of function $\phi_1$, we control $J_2$ in terms of $\psi_\lambda(x)$ and $F(x)$ respectively.
    \begin{equation*}
        \begin{aligned}
            J_2 & \leq 4 \lambda^2 \int_{t_1}^{t_2} \int_{\mathbb{H}^n} \int_{\mathbb{H}^n} [F(x) - F(y)]^2 J(x,y) \diff \mu(x) \diff \mu(y) \diff t \\
            &+ 4 \int_{t_1}^{t_2}\int_{B_3}\int_{\mathbb{H}^n} [\psi_\lambda(x) - \psi_\lambda(y)]^2 J(x,y) \diff \mu(x) \diff \mu(y) \diff t := L_1 + L_2.
        \end{aligned}
    \end{equation*}
    Since $F$ is a Lipschitz function with compact support, we have $L_1\leq C \lambda^2$. Furthermore, notice that $\psi_\lambda(x) = 0$ for $x\in B_3$ by requiring $0<\lambda<1/3$, we get 
    \begin{equation*}
        \begin{aligned}
            L_2  &= 4 \int_{t_1}^{t_2}\int_{B_3}\int_{\mathbb{H}^n} \psi_\lambda(y)^2 J(x,y) \diff \mu(x) \diff \mu(y) \diff t\\
            & \leq 4 \Lambda |B_3| \int_{t_1}^{t_2}\int_{\{|y| > \lambda^{-4/\alpha}\}}\frac{((|y|-\lambda^{-4/\alpha})_+^{\alpha/4}-1)_+^2}{(|y| - 3)^{Q+\alpha}}\diff \mu(y)\diff t\\
            &\leq 4 \Lambda |B_3|\lambda^2 \int_{t_1}^{t_2}\int_{\{|z|>1\}}\frac{((|z| - 1)_+^{\alpha/4}-\lambda)_+^2}{(|z|-3\lambda^{4/\alpha})^{Q+\alpha}}\diff \mu(z) \diff t\\
            &\leq C\lambda^2.
        \end{aligned}
    \end{equation*}
    Therefore, we obtained
    \begin{equation*}
        \begin{aligned}
            \int _{\mathbb{H}^n}\mathcal{B}_{\phi_1}(u(x,t_2))\diff \mu(x) \Big|_{t_1}^{t_2} &+\frac{1}{2} \int _{t_1}^{t_2}\mathcal{E}_{J}((u-\phi_1)_+,(u-\phi_1)_+)(t)\diff t\\ 
            &+ \int _{t_1}^{t_2}\mathcal{E}_{J}((u-\phi_1)_-,(u-\phi_1)_+)(t)\diff t 
            \leq C\lambda^2.
        \end{aligned}
    \end{equation*}
    Define $H(t) : =\int _{\mathbb{H}^n}\mathcal{B}_{\phi_1}(u(x,t))\diff \mu(x)$, we have
    \begin{equation*}
        \begin{aligned}
            H'(t)\leq C\lambda^2.
        \end{aligned}
    \end{equation*}
    Meanwhile, observe that $\phi_1\geq 1 - \lambda\geq 1/2$ and we choose $l = 1/2$ in Lemma 7. Also, if $u(x)> \phi_1$,  we get $x\in B_3$ and $u(x)\leq 2$. We choose $M = 2$ in Lemma 7. Furthermore, we have $(u-\phi_1)_+\leq \lambda \mathbbm{1}_{B_3}$. Hereafter, since $C_1\leq \beta'(s)\leq C_2$ for every $\frac{1}{2}\leq s\leq 2$, we obtain
    \begin{equation*}
        \begin{aligned}
            H(t) \leq C_2 \int_{\mathbb{H}^n}(u-\phi_1)_+^2\diff \mu(x) \leq C C_2\lambda^2.\\
        \end{aligned}
    \end{equation*}
    Hence, we obtain
    \begin{equation*}
        \begin{aligned}
            \int _{t_1}^{t_2}\mathcal{E}_{J}((u-\phi_1)_+,(u-\phi_1)_+)(t)\diff t \leq CC_2\lambda^2,
        \end{aligned}
    \end{equation*}
    and
    \begin{equation*}
        \begin{aligned}
            \int _{t_1}^{t_2}\mathcal{E}_{J}((u-\phi_1)_-,(u-\phi_1)_+)(t)\diff t 
            \leq CC_2\lambda^2.                             
        \end{aligned}
    \end{equation*}
    \textbf{An estimate of \boldmath{$\int_{\mathbb{H}^n} (u - \phi_1)_+^2(t,x)\diff \mu(x)$}:} 
    
    \noindent Notice that $\phi_0(x) = 0$ for $x \in B_2$. The condition (7.17) can be rewritten as $|\{u<\phi_0\}\cap (B_2\times(-2,-2a))|>\mu$. Define 
    \begin{equation*}
        \begin{aligned}
            \Sigma := \{t\in[-2,-2a]: |\{u(\cdot,t)<\phi_0\}\cap B_2|\geq \mu/4\}.
        \end{aligned}
    \end{equation*}
    Then, 
    \begin{equation*}
        \begin{aligned}
            \mu<|\{u<\phi_0\}\cap (B_2\times(-2,-2a))|\leq \mu/4 \cdot (2-2a) + |B_2|\cdot|\Sigma|,
        \end{aligned}
    \end{equation*}
    and
    \begin{equation*}
        \begin{aligned}
            |\Sigma|\geq \frac{\mu}{2|B_2|}.
        \end{aligned}
    \end{equation*}
    Next we prove that the integral $\int_{\mathbb{H}^n} (u - \phi_1)_+^2(t,x)\diff \mu(x)$ is very small for most of the time $t$ in $\Sigma$. Since $(u-\phi_1)_+\leq \lambda$, we have
    \begin{equation*}
        \begin{aligned}
            C C_2\lambda^2 &\geq \int_{-2}^{-2a} \mathcal{E}_J((u-\phi_1)_+, (u-\phi_1)_-)\diff t\\ 
            &\geq C\int_{\Sigma}\int_{B_3} \int_{\{|\{u(\cdot,t)<\phi_0\}\cap B_2|\geq \mu/4\}} (u-\phi_1)_+(x) \cdot(1-\lambda)\cdot J(x,y)\diff \mu(x)\diff \mu(y)\diff t\\
            & \geq C\Lambda^{-1}\frac{\mu}{8} \int_{\Sigma}\int_{\mathbb{H}^n} (u-\phi_1)_+(x)\diff \mu(x)\diff t\\
            & \geq C \frac{\mu}{8\lambda}\int_{\Sigma}\int_{\mathbb{H}^n}(u-\phi_1)_+^2(x) \diff \mu(x)\diff t.
        \end{aligned}
    \end{equation*}
    Hereafter, 
    \begin{equation*}
        \begin{aligned}
            \int_{\Sigma}\int_{\mathbb{H}^n}(u-\phi_1)_+^2(x) \diff \mu(x)\diff t \leq CC_2\frac{\lambda^3}{\mu}\leq \lambda^{3-\frac{1}{8}},
        \end{aligned}
    \end{equation*}
    by requiring 
    \begin{equation*}
        \begin{aligned}
            \lambda\leq (\frac{\mu}{CC_2})^8.
        \end{aligned}
    \end{equation*}
    By Tchebychev inequality, we have
    \begin{equation}
        \begin{aligned}
            \int_{\mathbb{H}^n}(u-\phi_1)_+^2(x,t)\diff \mu(x) \leq \lambda^{3-\frac{1}{4}}
        \end{aligned}
    \end{equation}
    except for a tiny set $A \subset \Sigma$ with $|A|\leq \lambda^{\frac{1}{8}}$. Furthermore, 
    \begin{equation*}
        \begin{aligned}
            |\Sigma| \geq \frac{\mu}{2|B_2|}\geq 2 \lambda^{\frac{1}{8}}~~\rm{if}~\lambda\leq (\frac{\mu}{4|B_2|})^8.
        \end{aligned}
    \end{equation*}
    Therefore, (7.20) holds for most of the time $t\in [-2,-2a]$ with a measure greater than $\mu/ 4|B_2|$.

    \noindent \textbf{Searching an intermediate set between two level sets in the setting:}

    \noindent Define 
    $\Sigma_1:= \{t\in[-2a, 0]: |\{u(\cdot,t)>\phi_2 \}|> \nu/4a\} $
    The condition (7.17) implies 
    \begin{equation*}
        \begin{aligned}
            \nu\leq |B_3|\cdot |\Sigma_1| + \frac{\nu}{4a} \cdot 2a. 
        \end{aligned}
    \end{equation*}
    Hence $|\Sigma_1|>0$ and there exists $T_0\in [-2a,0]$ such that
    \begin{equation*}
        \begin{aligned}
            |\{(u-\phi_2)_+(T_0, \cdot)>0\}|>\frac{\nu}{4a}\geq \frac{\nu}{4}.
        \end{aligned}
    \end{equation*}
    Let time $T_0$ go backwards, following the inequality (7.20) from the first step, we can find another $T_1\in[-2,-2a]$ satisfying
    \begin{equation*}
        \begin{aligned}
            \int_{\mathbb{H}^n}(u-\phi_1)_+^2(T_1,x)\diff \mu(x)\leq \lambda^{3-\frac{1}{4}}.
        \end{aligned}
    \end{equation*}
    Then,
    \begin{equation*}
        \begin{aligned}
            H(T_1)\leq C_2 \lambda^{3-\frac{1}{4}}.
        \end{aligned}
    \end{equation*}
    Moreover, we will derive the estimate of the integral $ \int_{\mathbb{H}^n}(u-\phi_1)_+^2(T_0 ,x)\diff \mu(x)$ at time $T_0$. 
    \begin{equation*}
        \begin{aligned}
            \int_{\mathbb{H}^n}(u-\phi_1)_+^2(T_0 ,x)\diff \mu(x) &\geq \int _{\mathbb{H}^n}(\phi_1-\phi_2)^2 \mathbbm{1}_{\{(u-\phi_2)_+>0\}}\\
            &\geq \int_{\mathbb{H}^n}(\lambda-\lambda^2)^2 F^2(x) \mathbbm{1}_{\{(u-\phi_2)_+>0\}}\geq C_F\frac{\lambda^2\nu^3}{4}.
        \end{aligned}
    \end{equation*}
    Then,
    \begin{equation*}
        \begin{aligned}
            H(T_0)\geq C_1 C_F\frac{\lambda^2\nu^3}{4}.
        \end{aligned}
    \end{equation*}
    By requiring $\lambda^{1-\frac{1}{4}}\leq \frac{C_1C_F}{C_2}\cdot\frac{\nu^3}{64}$, then we have $H(T_1)\leq C_1C_F\frac{\lambda^2\nu^3}{64}$. Define 
    \begin{equation*}
        \begin{aligned}
            \Sigma_2 :=\{t\in[T_1, T_0]:C_1C_F\frac{\lambda^2\nu^3}{64}<H(t)<C_1C_F\frac{\lambda^2\nu^3}{4}\}.
        \end{aligned}
    \end{equation*}
    Since $H'(t)\leq \lambda^2$, we get $|\Sigma_2|\geq C_1C_F\nu^3/8$. Now we are ready to search an intermediate level set between the level sets in (7.16) and (7.17). For any time $\tau\in \Sigma_2$, we have $|\{(u-\phi_2)_+(\tau, \cdot)\geq0\}|< \frac{\nu}{4}.$ Otherwise, we can obtain $H(\tau)\geq C_1C_F\frac{\lambda^2\nu^3}{4}$ and $\tau\notin \Sigma_2$, which is a contradiction. 

    To make sure that we can find an intermediate level set $\{\phi_0<u<\phi_2\}$ whose measure is positive, we have to give a small upper bound of $|\{(u-{\phi_0})_+(\cdot,t)\leq 0\}\cap B_3|$ for most of the time $t$ in $\Sigma_2$. Define $\Sigma_3:=\{t:|\{(u-{\phi_0})_+(\cdot,t)\leq 0\}\cap B_3|\geq \mu\}$. Then
    \begin{equation*}
        \begin{aligned}
            CC_2 \lambda^2 &\geq \int_{-2}^{0}\int_{\mathbb{H}^n}(u-\phi_1)_+(u-\phi_1)_-J(x,y)\\
            &\geq C\mu\int_{\Sigma_3}\int_{B_3}(u-\phi_1)_+\diff \mu(x)\diff t\\
            &\geq \frac{C\mu}{\lambda}|\Sigma_3|\cdot(\frac{C_1C_F\lambda^2\nu^3}{64}).
        \end{aligned}
    \end{equation*}
    Hereafter,
    \begin{equation*}
        \begin{aligned}
            |\Sigma_3|\leq\frac{CC_2\lambda}{C_1\mu\nu^3}.
        \end{aligned}
    \end{equation*}
    By requiring $\lambda<C\frac{C_1^2}{C_2}\mu\nu^6$, we have
    \begin{equation*}
        \begin{aligned}
            |\Sigma_3|\leq C_1C_F\nu^3/16 \leq |\Sigma_2|/2.
        \end{aligned}
    \end{equation*}
    Therefore, for those time $t\in \Sigma_2\backslash \Sigma_3$, we have
    \begin{equation*}
        \begin{aligned}
            |\{\phi_0<u(\cdot,t)\leq \phi_2\}\cap B_3|\geq |B_3| - \mu  -\nu/4 \geq 1/2.
        \end{aligned}
    \end{equation*}
    And
    \begin{equation}
        \begin{aligned}
            |\{1+F<u<1+\lambda^2F\}\cap(B_3\times(-2,0))|&\geq \int_{\Sigma_2\backslash\Sigma_3}1/2 \diff t\\
            &\geq |\Sigma_2|/4\geq C_1C_F\nu^3 / 8. 
        \end{aligned}
    \end{equation}
\end{proof}

\begin{remark}
    In the above proof, we use the inequality $\int_{\{|z|>1\}}\frac{((|z| - 1)_+^{\alpha/4}-\lambda)_+^2}{(|z|-3\lambda^{4/\alpha})^{Q+\alpha}}\diff \mu(z) < \infty$, which is similar to the inequality $\int_{\{\{|x|>1\}\cap \mathbb{R}^n\}}\frac{((|x| - 1)_+^{\alpha/4}-\lambda)_+^2}{(|x|-3\lambda^{4/\alpha})^{n+\alpha}}\diff x <\infty$ on $\mathbb{R}^n$. The difference is we use the homogeneous dimension $Q$ instead on the Heisenberg group $\mathbb{H}^n$.
\end{remark}

We are ready to show the oscillation Lemma by combining the first and second De Giorgi Lemma. For any $\lambda$ in Lemma 10, we define a new barrier function with more restrictive control:
\begin{equation*}
    \begin{aligned}
        H_{\lambda,\epsilon}(x)=((|x|-\lambda^{ - 4/\alpha})_+^{\epsilon}-1)_+,~~x\in \mathbb{H}^n.
    \end{aligned}
\end{equation*}
\begin{lemma}
Assume $0<C_1\leq \beta'(s)\leq C_2$ for every $1/4\leq s\leq 2$, or $-2\leq s\leq -1/4$. If $u:\mathbb{H}^n\times[-2,0]\rightarrow \mathbb{R}$ is a weak solution to equation (7.1) satisfying 
  \begin{equation}
      \begin{aligned}
      |u(x,t)|\leq 1+H_{\lambda,\epsilon}(x) ~~\text{in} ~\mathbb{H}^n\times[-2,0],
      \end{aligned}
  \end{equation}
  then there exists $0<a<1$ and $0<\theta<\frac{1}{2}$, depending on the nonlinearity function $\beta$ and the dimensional constants, such that
  \begin{equation}
      \begin{aligned}
      \sup\limits_{\Gamma_{1,a}}-\inf\limits_{\Gamma_{1,a}}\leq 2-\theta.
      \end{aligned}
  \end{equation}
\end{lemma}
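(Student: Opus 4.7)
The strategy is the classical De Giorgi oscillation reduction: combine the second De Giorgi lemma (Lemma 10) iteratively with the first (Lemma 9) at the end, after an appropriate rescaling. First I would perform a dichotomy: since $|B_2\times(-2,-2a)| = 2|B_2|(1-a)$, at least one of $\{u\leq 0\}$ or $\{u\geq 0\}$ occupies measure $\geq |B_2|(1-a)=:\mu$ in this set. By replacing $u$ with $-u$ and $\beta$ with $\tilde\beta(s)=-\beta(-s)$ (which is legitimate because the hypothesis on $\beta'$ is symmetric, i.e.\ holds on $[1/4,2]$ or $[-2,-1/4]$), I may assume $|\{u\leq 0\}\cap (B_2\times(-2,-2a))|\geq \mu$.

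\textbf{Iterative rescaling.} For $k=0,1,2,\dots$ introduce
\begin{equation*}
v_k(x,t)=\lambda^{-2k}\bigl(u(x,t)-(1-\lambda^{2k})\bigr).
\end{equation*}
Each $v_k$ solves an equation of the form (7.1) with nonlinearity $\beta_k$ of the same algebraic type $\beta_k(s)=a_k|b_ks+c_k|^{1/m-1}(b_ks+c_k)$, and the bound $C_1\leq \beta_k'\leq C_2$ on $[1/4,2]$ is preserved by this affine rescaling. The growth condition gives $v_k\leq 1+\lambda^{-2k}H_{\lambda,\epsilon}(x)$, which is dominated by $1+\psi_\lambda(x)$ provided $\epsilon$ is chosen small enough relative to $\alpha/4$ and to a cutoff $K$ chosen below. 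Moreover $\{v_k\leq 0\}\supset\{u\leq 0\}$, so (7.17) of Lemma 10 holds uniformly in $k$. Now suppose, for contradiction, that $|\{v_k>1+\lambda^2F\}\cap(B_3\times(-2a,0))|\geq \nu$ for every $k=0,1,\dots,K$. Lemma 10 then yields an intermediate set of measure $\geq \gamma$ for each $k$; restricted to $B_2$ (where $F\equiv -1$) these sets are the pairwise disjoint slabs $\{1-\lambda^{2k}<u<1-\lambda^{2(k+1)}\}$, while on the annulus $B_3\setminus B_2$ a uniform measure-loss estimate absorbs any overlap. Summing gives $(K+1)\gamma_0 \leq |B_3\times(-2,0)|$, a contradiction once $K$ is taken larger than an explicit constant depending on $\mu,\nu,\lambda,\alpha,Q,C_1,C_2$.

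\textbf{Final reduction via Lemma 9.} Consequently some $k^*\leq K$ exists with $|\{v_{k^*}>1+\lambda^2F\}\cap(B_3\times(-2a,0))|<\nu$. Set $w(x,t):=\lambda^{-2}(v_{k^*}(x,t)-1)+1=\lambda^{-2(k^*+1)}(u(x,t)-1)+1$. Then $w$ solves (7.1) with a nonlinearity $\tilde\beta$ still of the required type, and $\{w>0\}$ is contained in $\{v_{k^*}>1-\lambda^2\}$, which on $B_2$ is exactly the set just controlled. Choosing $\nu$ small enough so that the hypothesis (7.9) of Lemma 9 (with explicit $\delta(\tilde\beta)$ as in (7.10)) is met, and having selected $\epsilon$ small enough that $w$ satisfies the growth bound $w\leq 1+(|x|^{\alpha/4}-1)_+$, Lemma 9 gives $w\leq 1/2$ on $\Gamma_{1,a}$, i.e.\ $u\leq 1-\tfrac12\lambda^{2(k^*+1)}$ on $\Gamma_{1,a}$. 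Combined with the pointwise bound $u\geq -1$ on $B_1$ (where $H_{\lambda,\epsilon}\equiv 0$), the oscillation reduction holds with $\theta=\tfrac12\lambda^{2(K+1)}$.

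\textbf{Main obstacle.} The core difficulty is the \emph{order-of-quantifiers} bookkeeping: the choice of $\epsilon$ must make $\lambda^{-2k}H_{\lambda,\epsilon}\leq \psi_\lambda$ valid uniformly for $k=0,\dots,K$ and make the terminal rescaled barrier fit Lemma 9, while $K$ depends on $\gamma$ and hence on $\nu$, which in turn must be small enough to satisfy the measure threshold in Lemma 9 with constant $\delta(\tilde\beta)$ determined by the rescaled nonlinearity. The correct sequence is $\mu\to a\to \lambda\to \nu\to K\to \epsilon$, and one must verify that $\delta(\tilde\beta)$ does not degenerate under the $k^*$-th rescaling --- this is exactly where the structural form $\beta(s)=a|bs+c|^{1/m-1}(bs+c)$, rather than a general nonlinearity, becomes indispensable, since it guarantees that $\inf_{[1/4,2]}\beta_k'$ remains bounded below across iterations.
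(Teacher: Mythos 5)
Your proposal takes the same route as the paper: a sign dichotomy to secure the measure hypothesis of Lemma~10, iteration of the affine rescaling $v_k$ together with Lemma~10 to force the upper level set below $\nu$ after finitely many steps, and a terminal application of Lemma~9. The overall structure and the choice $\theta = \tfrac12\lambda^{2(K+1)}$ match the paper's proof (which gets $\theta = \tfrac12\lambda^{2(k_0+2)}$), with the only substantive deviation being that your dichotomy is the direct pigeonhole on $\{u\le 0\}$ vs.\ $\{u\ge 0\}$ in $B_2\times(-2,-2a)$, whereas the paper first tests whether Lemma~9 applies outright with $a=1$ and, if not, shrinks $a$ so that $|\Gamma_{2,2a}|<\tfrac{C}{2}\delta(\beta)^{1+Q/\alpha}$ — a cosmetic difference.

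Two small corrections to your bookkeeping paragraph. First, the order should be $\mu,a,\nu \to (\bar\lambda,\gamma) \to \lambda \to K \to \epsilon$, not $\mu\to a\to\lambda\to\nu\to\cdots$: Lemma~10 produces $\bar\lambda$ and $\gamma$ \emph{as functions of} $\mu,\nu,a,C_1,C_2$, so $\nu$ (which you rightly tie to $\delta_0$ and $a$ via Lemma~9's threshold) must be fixed before $\lambda$. Second, the claim that the algebraic form $\beta(s)=a|bs+c|^{1/m-1}(bs+c)$ is ``indispensable'' for controlling $\inf_{[1/4,2]}\beta_k'$ is a misconception: the affine map $s\mapsto\lambda^{2k}s+(1-\lambda^{2k})$ sends $[1/4,2]$ into $[1-\tfrac34\lambda^{2k},\,1+\lambda^{2k}]\subset[1/4,2]$, so $\beta_k'(s)=\beta'(\lambda^{2k}s+1-\lambda^{2k})$ inherits the bounds $C_1\le\beta'\le C_2$ for \emph{any} $\beta$ satisfying the hypothesis, and the same argument bounds $\beta_k(2)-\beta_k(1/4)\le\tfrac74 C_2$ so $\delta(\beta_k)\ge\tfrac{C_1/2}{1+\tfrac74 C_2}=:\delta_0$ with no structural input. (Also, your pigeonhole gives $|\{u\le 0\}|\ge\mu$ while Lemma~10 asks for $\{u<0\}$; this is harmless if you start the iteration at $k=1$, since $\{v_1<0\}=\{u<1-\lambda^2\}\supset\{u\le 0\}$.)
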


\begin{proof}
    Choosing $a=1$ in Lemma 9, if 
    \begin{equation*}
        \begin{aligned}
            |\{u>0\}\cap \Gamma_{2,2}|\leq C\delta(\beta)^{1+\frac{Q}{\alpha}} 
        \end{aligned}
    \end{equation*}
    holds, then we obtain the oscillation by Lemma 9. Otherwise, if 
    \begin{equation*}
        \begin{aligned}
            |\{u>0\}\cap \Gamma_{2,2}|\geq C\delta(\beta)^{1+\frac{Q}{\alpha}} 
        \end{aligned}
    \end{equation*}
    then 
    \begin{equation*}
        \begin{aligned}
        |\{u>0\}\cap (B_2\times(-2,-2a))|> \frac{C}{2}\delta(\beta)^{1+\frac{Q}{\alpha}} 
        \end{aligned}
    \end{equation*}
    Here we take $0<a<1$ such that $|\Gamma_{2,2a}|< \frac{C}{2}\delta(\beta)^{1+\frac{Q}{\alpha}}$. Notice that $a$ only depends on the nonlinearity function $\beta$ and the dimensional constants. Therefore, we work on the solution $-u$ with the nonlinearity function:
    \begin{equation*}
        \begin{aligned}
            \Tilde{\beta}(s)=-\beta(-s)
        \end{aligned}
    \end{equation*}
    Furthermore, $-u$ satisfies (7.17) in Lemma 10 by choosing $\mu=\frac{C}{2}\delta(\beta)^{1+\frac{Q}{\alpha}}$. To be clear without misunderstandings, we assume that $u$ satisfies the condition 
    $$|\{u<0\}\cap (B_2\times(-2,-2a))|> \frac{C}{2}\delta(\beta)^{1+\frac{Q}{\alpha}}$$ and 
    $$0<C_1\leq \beta'(s)\leq C_2 ~~ \text{for}~  \text{every} ~1/4\leq s\leq 2.$$
    Considering the sequence of rescaled functions:
    \begin{equation}
        \begin{aligned}
            u_{k+1}=\frac{u_k-(1-\lambda^2)}{\lambda^2}, ~~u_0=u.\nonumber
        \end{aligned}
    \end{equation}
    Then $u_k$ is a weak solution to 
    \begin{equation}
        \begin{aligned}
            \partial_t\beta_{k}(u_k)+Lu_k=0.\nonumber
        \end{aligned}
    \end{equation}
    with a nonlinearity recurrence relation
    \begin{equation}
        \begin{aligned}
            \beta_{k+1}(s)=\frac{\beta_k(\lambda^2s+1-\lambda^2)}{\lambda^2}, ~~\beta_0=\beta. \nonumber
        \end{aligned}
    \end{equation}
    and 
    \begin{equation}
        \begin{aligned}
            \beta_k(s)=\frac{\beta(\lambda^{2k}s+1-\lambda^{2k})}{\lambda^{2k}}.\nonumber
        \end{aligned}
    \end{equation}
    Here, $\beta_k'(s)=\beta'(\lambda^{2k}s+1-\lambda^{2k})$. When $1/4\leq s\leq 2$, we have 
    $$ 2\geq \lambda^{2k}s + 1 - \lambda^{2k} \geq 1/4,~~\text{for}~\text{all}~k\geq 0. $$
    Hence, for all $k\geq 0$, we have the uniform bound of $\beta_k'(s)$,
    \begin{equation}
        \begin{aligned}
            C_1\leq \beta'(\lambda^{2k}s+1-\lambda^{2k})=\beta_k'(s)\leq C_2, ~~\text{for} ~\text{all} ~1/4\leq s\leq 2.\nonumber
        \end{aligned}
    \end{equation}
    Moreover, the constants $\delta(\beta_k)$ for $k\geq 0$ in Lemma 9 must have uniform positive lower bounds $\delta_0$. We choose $\nu$ in Lemma 10 as the constant constructed in Lemma 9: 
    $$\nu = C\delta_0^{1+\frac{Q}{\alpha}}a^{(1+\frac{Q}{\alpha})}. $$
    We choose $k_0$ as the smallest integer greater than $|B_3\times(-2,0)|/\gamma$ and $\epsilon$ small enough such that
    \begin{equation}
        \begin{aligned}
            \frac{(|x|^{\epsilon}-1)_+}{\lambda^{2(k_0+1)}}\leq (|x|^{\alpha/4}-1)_+, ~~\text{for}~\text{all}~ x\in \mathbb{H}^n.\nonumber
        \end{aligned}
    \end{equation}
    Here $\gamma$ is the constant in (7.19) corresponding to the previous determined constants $\mu, \nu, C_1, C_2$. On the other hand, since $u_{k+1}(x,t) =  1+\frac{u_k(x,t)-1}{\lambda^2}$, we have
    \begin{equation}
        \begin{aligned}
            u_k(x,t) =  1+\frac{u(x,t)-1}{\lambda^{2k}}.\nonumber
        \end{aligned}
    \end{equation}
    Therefore, for each $0\leq k \leq {k_0+1}$, 
    \begin{equation}
        \begin{aligned}
            u_k(x,t)\leq 1+ \frac{((|x|-\lambda^{-4/\alpha})_+^{\epsilon}-1)_+}{\lambda^{2(k_0+1)}}\leq 1+((|x|-\lambda^{-4/\alpha})_+^{\alpha/4}-1)_+=1+\psi_\lambda(x).\nonumber
        \end{aligned}
    \end{equation}
    As long as  $|\{u_k>1+\lambda^2F\}\cap(B_3\times(-2a,0))|\geq \nu$ for all $1\leq k\leq k_0$, by Lemma 10, we have
    \begin{equation}
        \begin{aligned}
            |\{u_{k}\geq 1 + \lambda^2 F \}\cap (B_3 \times (-2a,0))|&\leq  |\{u_{k}\geq 1 + F\}\cap (B_3 \times (-2a,0))|-\gamma\\
            & \leq|\{u_{k-1}\geq 1+\lambda^2F\}\cap (B_3 \times (-2a,0))|-\gamma\\
            &\leq |B_3\times[-2,0]| - k \cdot \gamma.\nonumber\\
        \end{aligned}
    \end{equation}
    This can not true up to $k_0$. Hence, there exists $1\leq k'\leq k_0$ such that $|\{u_{k'}>1+\lambda^2F\}\cap (B_3\times(-2a,0))|\leq \nu$. And we have,
    \begin{equation}
        \begin{aligned}
        |\{u_{k'+1}>0\}\cap (B_2\times(-2a,0))|\leq 
        |\{u_{k'}>1+\lambda^2F\}\cap(B_3\times(-2a,0))|\leq \nu.\nonumber\\
        \end{aligned}
    \end{equation}
    By Lemma 9, we have $u_{k'+1}\leq \frac{1}{2}$ in $\Gamma_{1,a}$. This gives the result with $0<\theta=\frac{\lambda^{2(k_0+2)}}{2}<\frac{1}{2}$.
\end{proof}

Now we are ready to prove the continuity of the weak solutions to equation (1.1) by contradiction. Based on the oscillation Lemma 11, we will construct the iterative sequence in which are the solutions to the rescaled equation (7.1). And the iterative sequence exhibits the quantitative behavior of the solution around some given point since they are constructed over a decreasing parabolic cylinder. 

\noindent \textbf{Proof of Theorem 3}:
\begin{proof}
    We first use translations and rescaling arguments to move each point $(x_0,t_0)$ to the origin. Let $\tau_0=\inf\{1,t_0/3\}$ and $A=\sup\limits_{t_0/3\leq t\leq t_0}\norm{u(\cdot,t)}_\infty$. The existence of $A$ is guaranteed by the smoothing effects. Define the following rescaled function:
    \begin{equation*}
        \begin{aligned}
            u_0(x,t)=\frac{1}{A}u(x_0 \cdot \tau_0^{1/\alpha}x,t_0+\tau_0t), ~~t\in[-2,0]
        \end{aligned}
    \end{equation*}
    which is a solution to equation (7.1) with the nonlinearity function $\beta_0(s)=\frac{1}{A}\beta(As)$ and the measurable kernel
    $$J_0(x,y)=\tau_0^{\frac{Q+\alpha}{\alpha}}J(x_0 \cdot \tau_0^{1/\alpha}x,x_0 \cdot \tau_0^{1/\alpha}y).$$
    From the setting, we have $|u_0(x,t)|\leq 1 $ in $\mathbb{H}^n\times[-2,0]$ and still use $u$ to denote $u_0$. \\
    Let $Q_k=\Gamma_{R^{-k},R^{-k\alpha}}$ for every $k\geq 0$ and some large enough $R>1$ to be determined later. Define the semi-oscillation of $u$ in $Q_{k-1}$,
    \begin{equation}
        \begin{aligned}
            {\omega}_k=\frac{\sup_{Q_{k-1}}u-\inf_{Q_{k-1}}u}{2}.\nonumber
        \end{aligned}
    \end{equation}
    Our goal is to prove $\omega_k\rightarrow 0$ as $k\rightarrow \infty$. We prove it by contradiction and assume $\omega_k\geq \xi>0$. Given $k\geq 1$, we define
    \begin{equation}
        \begin{aligned}
            u_k(x,t)=\frac{u(R^{-k}x,R^{-\alpha k}t)-\mu_{k}}{\omega_k},~~\mu_k=\frac{\sup_{Q_{k-1}}u+\inf_{Q_{k-1}}u}{2}.\nonumber
        \end{aligned}
    \end{equation}
    The functions $u_k$ satisfy the equation (7.1) with
    \begin{equation}
        \begin{aligned}
            \beta_k(s)=\frac{\beta(\omega_ks+\mu_k)}{\omega_k}, ~~J_k(x,y)=R^{-(Q+\alpha)k}J_0(R^{-k}x,R^{-k}y).\nonumber
        \end{aligned}
    \end{equation}
    Here, $\beta_k'(s)=\beta'(\omega_k s +\mu_k)$. When $-2\leq s\leq 2$, we have
    \begin{equation}
        \begin{aligned}
            -2&\leq\frac{-\sup_{Q_{k-1}}u+3\inf_{Q_{k-1}}u}{2}\leq \omega_k s+\mu_k\\
            &\leq \frac{2\sup_{Q_{k-1}}u-2\inf_{Q_{k-1}}u+\sup_{Q_{k-1}}u+\inf_{Q_{k-1}}u}{2}\\
            &\leq 2.\nonumber
        \end{aligned}
    \end{equation}
    Hence, $\{\beta_k'(s)\}_{k=0}^{\infty}$ have uniform positive lower bound and upper bound for $ 1/4 \leq s \leq 2$ if $\mu_k\geq 0$ or $ -2 \leq s \leq - 1/4$ if $\mu_k\leq 0$.
    
    \noindent We claim that $\{u_k\}_{k=0}^{\infty}$ satisfy condition (7.21) in Lemma 11. If $|x|\leq R$, we have $|u_k|\leq1$ by requiring $R^{\alpha}>2$. If $|x|\geq R$, choosing $R$ large enough such that $H_{\lambda,\epsilon}(R)\geq \frac{2-\xi}{\xi}$, we have
    \begin{equation}
        \begin{aligned}
            |u_k(x,t)|\leq \frac{2}{\omega_k}\leq \frac{2}{\xi}\leq 1+H_{\lambda,\epsilon}(R)\leq 1+H_{\lambda,\epsilon}(x).\nonumber\\
        \end{aligned}
    \end{equation}
    Apply Lemma 10, we have
    \begin{equation}
        \begin{aligned}
            \frac{\sup_{\Gamma_{1,a}}u_k-\inf_{\Gamma_{1,a}}u_k}{2}\leq \theta.\nonumber
        \end{aligned}
    \end{equation}
    Take $R$ large enough such that $R^{\alpha}>\frac{1}{a}$, we obtain
    \begin{equation}
        \begin{aligned}
            {\omega}_k=\frac{\sup_{Q_{k-1}}u-\inf_{Q_{k-1}}u}{2}\leq \frac{\sup_{\Gamma_{1,a}}u_{k-2}-\inf_{\Gamma_{1,a}}u_{k-2}}{2}\omega_{k-2}\leq \theta\omega_{k-2}.\nonumber 
        \end{aligned}
    \end{equation}
    which leads to a contradiction. Therefore, we proved the continuity of weak solutions to equation (1.1) for general $m>0$.
\end{proof}

We have proved the continuity of the weak solution by the oscillation Lemma 11 and the constructed iterative sequence. Based on this construction, it is natural to think about proving the Hölder continuity at some reasonable point. The critical point is to provide a uniform oscillation for each step of iteration. 

Reviewing the conditions in Lemma 11, it requires $C_1\leq \beta'(s)\leq C_2$. Let us recall that $\beta'_k(s) = \beta'(\omega_k s + \mu_k)$. Since $\omega_k\rightarrow 0$ following from the above continuity result, the easiest way is to give a good bound of $\mu_k$. Without loss of generality, if $u(x_0,t_0)>0$, then $\mu_k\geq \delta$ for some $\delta>0$ when $k$ is very large due to the continuity. Hereafter, Lemma 11 provided a uniform oscillation for the iterative sequence.

\noindent \textbf{Proof of Theorem 4:}

\begin{proof}
    By rescaling arguments, we assume $(x_0,t_0)=(0,0)$ and $u(0,0)>0$. By the continuity of $u$ just proved above, there exists a large enough $R_0$ such that 
    $$ \inf_{\Gamma_{R^{-1},R^{-\alpha}}}u \geq u(0,0)/2 ~\text{for}~ \text{each} ~R\geq R_0. $$
    Notice that $R_0$ varies for different points even with same function value. Define:
    $$u_0 = u(R_0^{-1}x, R_0^{-\alpha}t),~(x,t) \in \mathbb{H}^n \times [-2,0].$$
    Let $Q_k=\Gamma_{R^{-k},R^{-k\alpha}}$ for every $k\geq 0$ and some large enough $R>1$ to be determined later. Define the semi-oscillation of $u_0$ in $Q_{k-1}$,
    \begin{equation}
        \begin{aligned}
            {\omega}_k=\frac{\sup_{Q_{k-1}}u_0-\inf_{Q_{k-1}}u_0}{2}.\nonumber
        \end{aligned}
    \end{equation}
    \begin{equation*}
        \begin{aligned}
             \mu_k=\frac{\sup_{Q_{k-1}}u_0+\inf_{Q_{k-1}}u_0}{2}
        \end{aligned}
    \end{equation*}
    Then, by the construction of $u_0$, 
    \begin{equation*}
        \begin{aligned}
            \mu_k \geq u(0,0) / 2 ~\text{for}~k \geq 1. 
        \end{aligned}
    \end{equation*}
    We still use $u$ to denote $u_0$ and define the following iterative sequence:
    \begin{equation}
        \begin{aligned}
            u_k(x,t)=\frac{u(R^{-k}x,R^{-\alpha k}t)-\mu_{k}}{\nu_k}, ~~u_0 = u.\nonumber
        \end{aligned}
    \end{equation}
    Here, $\nu_k\geq 0$ will be determined later. Observe that $u_k$ satisfies equation (7.1) with nonlinearity $\beta_k(s)$ and measurable kernel $J_k(x,y)$,
    \begin{equation}
        \begin{aligned}
            \beta_k(s)={\frac{\beta(\nu_k s+\mu_k)}{\nu_k}}, ~~J_k(x,y)=R^{-(Q+\alpha)k}J(R^{-k}x,R^{-k}y). \nonumber
        \end{aligned}
    \end{equation}
    Since $ 2 \geq \nu_k s + \mu_k \geq \min(u(0,0)/2, 1/4)$ if $1/4 \leq s \leq 2$, the nonlinearity function $\{\beta_k'(s)\}_{k=0}^{\infty}$ share uniform upper and lower bound depending on the value $u(0,0)$. Therefore, from Lemma 11, there exists $0 < a, \theta < 1$ corresponding to the above uniform constant of $\beta_k'(s)$.
    To overcome the difficulties from $a<1$, we define  $Q_k^a = \Gamma_{R^{-k}, a R^{-k\alpha}}.$ Let
    \begin{equation*}
        \begin{aligned}
            \mu_k ' = \frac{\sup_{Q_{k-1}^a}u + \inf_{Q_{k-1}^a}u}{2},~~\omega_k ' = \frac{\sup_{Q_{k-1}^a}u - \inf_{Q_{k-1}^a}u}{2}
        \end{aligned}
    \end{equation*} 
    Notice that  $Q_k^a \subset Q_k$ and $\mu_k' \geq u(0,0) / 2 ~~\text{for}~k \geq 1$. Hence, the constants $a$ and $\theta$ in Lemma 11 don't have to be changed. To be clear, we use $\mu_k, \omega_k$ to denote $\mu_k', \omega_k'$. 
    Choose $\nu_k = \theta ^{k}$ for $k\geq 0$ and $\mu_0=0$. We claim that $u_k(x,t)\leq1+H_{\lambda,\epsilon}(x)$ and prove it by induction. Choose $R$ large enough such that 
    \begin{equation}
        \begin{aligned}
            \frac{2+H_{\lambda,\epsilon}(x/R)}{\theta}\leq 1+H_{\lambda,\epsilon}(x)~~\text{for}~|x|\geq R.\nonumber
        \end{aligned}
    \end{equation}
    Assume $u_{k}(x,t)\leq 1+H_{\lambda,\epsilon}(x)$ holds for all $0\leq k\leq k_0$, our goal is to prove $u_{{k_0}+1}(x,t)\leq 1+H_{\lambda,\epsilon}(x)$. By Lemma 11, take $R$ large enough such that $R\geq 2/a$, we obtain
    \begin{equation}
        \begin{aligned}
            {\omega}_k=\frac{\sup_{Q_{k-1}^a} u-\inf_{Q_{k-1}^a} u}{2}\leq \frac{\sup_{\Gamma_{1,a}}u_{k-1}-\inf_{\Gamma_{1,a}}u_{k-1}}{2}\nu_{k-1}\leq \theta\nu_{k-1}, ~~\text{for}~~1\leq k \leq k_0 + 1.\nonumber 
        \end{aligned}
    \end{equation}
    Therefore, we obtain
    \begin{equation*}
        \begin{aligned}
        \omega_k \leq \nu_k, ~~\text{for}~1 \leq k \leq k_0 + 1.
        \end{aligned}
    \end{equation*}
    If $|x|\leq R$ and $t\in[-2,0]$, we have 
    \begin{equation*}
        \begin{aligned}
            (R^{-(k_0+1)}x, R^{-\alpha (k_0+1)} t) \in Q_{k_0}^a
        \end{aligned}
    \end{equation*}
    Since $\omega_{k_0+1} \leq \nu_{k_0 + 1}$, we obtain
    \begin{equation*}
        \begin{aligned}
            |u_{k_0+1}(x,t)| \leq 1 ~~\text{for}~(x,t)\in B_R \times [-2,0].
        \end{aligned}
    \end{equation*}
    Furthermore, since
    \begin{equation}
        \begin{aligned}
            \frac{\mu_{k_0}-\mu_{k_0+1}}{\nu_{k_0}}&\leq \frac{\mu_{k_0}-\mu_{k_0+1}}{\omega_{k_0}}\\
            &\leq \frac{\sup_{Q_{k_0-1}^a}u+\inf_{Q_{k_0-1}^a}u-\sup_{Q_{k_0}^a}u-\inf_{Q_{k_0}^a}u}{\sup_{Q_{k_0-1}^a}u-\inf_{Q_{k_0-1}^a}u}\\\nonumber
            &\leq 1,\\
        \end{aligned}
    \end{equation}
    we have
    \begin{equation}
        \begin{aligned}
                u_{k_0+1}(x,t)&=\frac{u(R^{-(k_0+1)}x,R^{-\alpha (k_0+1)}t)-\mu_{k_0+1}}{\nu_{k_0+1}}\\
                &\leq \frac{\mu_{k_0}-\mu_{k_0+1}+\nu_{k_0}(1+H_{\lambda,\epsilon}(x/R))}{\nu_{k_0+1}}\\
                &\leq \frac{2+H_{\lambda,\epsilon}(x/R)}{\theta}\\
                &\leq 1+H_{\lambda,\epsilon}(x).\nonumber
        \end{aligned}
    \end{equation}
\end{proof}

\begin{remark}
    If the solution $u$ has a uniform positive lower bound, then we can choose $R_0 = 1$ and obtain the uniform Hölder regularity. 
\end{remark}

\begin{remark}
    As $u(x_0,t_0)$ approaches zero, the uniform bounds of $\beta_k'(s)$ approach zero. This will lead to the degeneracy of the oscillation in Lemma 11, which means we can not obtain uniform Hölder regularity from the above constructions near the degenerate or singular points ($u(x_0,t_0) = 0$).
\end{remark}


As mentioned in the above remark, we can not use the oscillation Lemma directly near the vanishing points ($u(x_0,t_0)=0$). However, if $m\geq 1$, we can prove the Hölder regularity for all the sign-changing solutions by constructing another iterative sequence with different nonlinearity functions. The bounds of $\beta_k'(s)$ will be well controlled by considering the two different situations near the vanishing point and we can impose the previous oscillation Lemma in a quantitative way. 

\noindent\textbf{Proof of Theorem 5:}
\begin{proof}
    Denote 
    $$\mathcal{Q}_{R_0}^{\epsilon}((x_0,t_0))=B_{R_0^{-1}}(x_0)\times(t_0-R_0^{-{\alpha+\epsilon}},t_0),$$
    $$\mathcal{Q}_{R_0}((x_0,t_0),\omega)=B_{R_0^{-1}}(x_0)\times(t_0-R_0^{-\alpha}\omega ^{-\sigma},t_0),$$
    $$\mathcal{Q}^{\rho}_{R_0}((x_0,t_0),\omega)=B_{R_0^{-1}}(x_0)\times(t_0-\frac{1}{2}\rho R_0^{-\alpha}\omega ^{-\sigma},t_0).$$
    Here $\sigma=1-\frac{1}{m}$. There is no lack of generality to assume $(x_0,t_0)=(0,0)$ by scalings and translations. First, we consider the nonnegative solutions, the case of sign changing solution will be considered subsequently. For abrevity, denote 
    $\mathcal{Q}_{R_0}^{\epsilon}((0,0)):= \mathcal{Q}_{R_0}^{\epsilon}$, $\mathcal{Q}_R((0,0),\omega):=\mathcal{Q}_R(\omega)$ and $\mathcal{Q}^{
    \rho}_R((0,0),\omega): = \mathcal{Q}^{\rho}_R(\omega)$. Taking a fixed $R_0>0$ such that $\mathcal{Q}_{R_0}^{\epsilon}$ is contained in $\mathbb{H}^n\times[-3,0]$. Let 
    $$\omega_0 = \frac{\sup_{\mathcal{Q}_{R_0}^{\epsilon}}u - \inf_{\mathcal{Q}_{R_0}^{\epsilon}}u }{2}:=\frac{\text{osc} (u;\mathcal{Q}_{R_0}^{\epsilon})}{2}.$$ 
    There exists $R\geq R_0$ such that 
    \begin{equation}
        \omega_0^{\sigma}\geq R^{-\epsilon}.\nonumber
    \end{equation}
    Hence, 
    \begin{equation}
        \begin{aligned}
            \text{osc}(u;\mathcal{Q}_R(\omega_0))/2\leq \omega_0.\nonumber
        \end{aligned}
    \end{equation}

    \noindent (i) Assuming $\omega_0^{\sigma}\geq R_0^{-\epsilon}$, we have $\text{osc}(u;\mathcal{Q}_{R_0}(\omega_0))/2\leq \omega_0$. Define 
    \begin{equation}
        \begin{aligned}
            \nonumber\mu_0=\frac{\sup_{\mathcal{Q}_{R_0}(\omega_0)}u+\inf_{\mathcal{Q}_{R_0}(\omega_0)}u}{2}.
        \end{aligned}
    \end{equation}
    We also define
    $$u_1(x,t) = \frac{u(\frac{x}{C_0 R_0},\frac{t}{C_0^{\alpha}R_0^{\alpha}{\omega_0}^{\sigma}})-\mu_0}{\omega_0}.$$
    Choose $C_0$ large enough such that $C_0^{\alpha}\geq 2$, for $|x|\leq C_0$ and $-2 \leq t\leq 0$, we have
    \begin{equation*}
        \begin{aligned}
            (\frac{x}{C_0 R_0},\frac{t}{C_0^{\alpha}R_0^{\alpha}{\omega_0}^{\sigma}}) \in \mathcal{Q}_{R_0}(\omega_0).
        \end{aligned}
    \end{equation*}
    Then, 
    \begin{equation*}
        \begin{aligned}
            |u(x,t)|\leq 1,  ~~\text{for}~(x,t)\in B_{C_0}\times [-2,0].   
        \end{aligned}
    \end{equation*}
    Considering equation (7.1) with the following nonlinearity function,  
    \begin{equation}
        \begin{aligned}
            \beta(s)=\frac{|as+b|^{1/m}}{a^{1-\sigma}}, ~~\text{with}~ 0\leq  \frac{b}{a}\leq \frac{3}{2}.
        \end{aligned}
    \end{equation}
    Then
    \begin{equation*}
        \beta'(s)=|s+\frac{b}{a}|^{-\sigma}.
    \end{equation*}
    Notice that $\beta'(s)$ share the uniform upper and lower bound for $\frac{1}{4}\leq s\leq 2$. Let $\theta$ and $a$ be the corresponding constants in Lemma 11. Denote $\frac{2-\theta}{2}$ by $\eta$, which will be important in the following constructions. Furthermore, choose $C_0$ large enough such that 
    \begin{equation*}
        \begin{aligned}
            2 R_0^{{\epsilon}/{\sigma}} \leq 1 + H_{\lambda, \epsilon}(x),~~\text{for} ~|x|\geq C_0,
        \end{aligned}
    \end{equation*}
    and 
    \begin{equation}
        \begin{aligned}
            \frac{2+H_{\lambda,\epsilon}(\frac{x}{C_0})}{\eta}\leq 1+ H_{\lambda,\epsilon}(x),~~\text{for} ~|x|\geq C_0.
        \end{aligned}
    \end{equation}
    Then,
    \begin{equation*}
        \begin{aligned}
            |u_1(x,t)|\leq \frac{2}{\omega_0}\leq 1+H_{\lambda,\epsilon}(x)~~\text{for}~|x|\geq C_0.
        \end{aligned}
    \end{equation*}
    Here we use 
    \begin{equation}
        \begin{aligned}
            \omega_0 \geq R_0^{-\frac{\epsilon}{\sigma}} ~~\text{since}~\sigma \geq 0
        \end{aligned}
    \end{equation}
    By Lemma 11, we obtain
    \begin{equation*}
        \begin{aligned}
            \frac{\sup_{\Gamma_{1,a}}u_1 - \inf_{\Gamma_{1,a}}u_1}{2}\leq \eta.
        \end{aligned}
    \end{equation*}
    Hence, define the folllowing iterative sequence,
    \begin{equation*}
        \begin{aligned}
            R_k = K_0^{k}\cdot R_0, ~~\omega_k = \eta^{k} \cdot \omega_0,~~\text{for} ~k \geq 0.
        \end{aligned}
    \end{equation*}
    Here, $K_0$ will be determined later. Define the corresponding solution $u_k$:
    \begin{equation*}
        \begin{aligned}
            u_k(x,t) = \frac{u(\frac{x}{C_0 R_{k-1}},\frac{t}{C_0^{\alpha}R_{k-1}^{\alpha}{\omega_{k-1}^{\sigma}}})-\mu_{k-1}}{\omega_{k-1}},~~\mu_{k-1} = \frac{\sup\limits_{\mathcal{Q}_{R_{k-1}}(\omega_{k-1})}u~+~\inf\limits_{\mathcal{Q}_{R_{k-1}}(\omega_{k-1})}u}{2}.
        \end{aligned}
    \end{equation*}
    The corresponding nonlinearity functions will be:
    \begin{equation*}
        \begin{aligned}
            \beta_k(s) = \frac{|\omega_{k-1} s + \mu_{k-1}|^{1/m}}{\omega_{k-1}^{1-\sigma}},
        \end{aligned}
    \end{equation*}
    and 
    \begin{equation*}
        \begin{aligned}
            \beta_k'(s) = |s + \frac{\mu_{k-1}}{\omega_{k-1}}|^{-\sigma}. 
        \end{aligned}
    \end{equation*}
    If we assume
    $$\frac{\inf\limits_{\mathcal{Q}_{R_{k}}(\omega_{k})}u}{\omega_k}\leq \frac{1}{2}~~ \text{for}~ 0\leq k\leq k_0,$$
    we have
    \begin{equation*}
        \begin{aligned}
            \frac{\mu_{k-1}}{\omega_{k-1}}\leq \frac{3}{2}~~\text{for}~1\leq k\leq k_0+1.
        \end{aligned}
    \end{equation*}
    We claim that 
    \begin{equation*}
        \begin{aligned}
            |u_k(x,t)|\leq 1 + H_{\lambda,\epsilon}(x),~~ \text{for}~1\leq k\leq k_0+3,
        \end{aligned}
    \end{equation*}
    and 
    \begin{equation*}
        \begin{aligned}
            \text{osc}(u;\mathcal{Q}_{R_{k-1}}({\omega_{k-1}}))/2\leq \omega_{k-1},~~\text{for}~1\leq k\leq k_0+2.
        \end{aligned}
    \end{equation*}
    We prove them by induction. Assume that $|u_k(x,t)|\leq 1 + H_{\lambda,\epsilon}(x)$ and
    \begin{equation}
        \begin{aligned}
            \nonumber\text{osc}(u;\mathcal{Q}_{R_{k-1}}({\omega_{k-1}}))/2\leq \omega_{k-1}~\rm{ for} ~1\leq k\leq k_1~  \rm{with}~ 1\leq k_1\leq k_0+1.
        \end{aligned}
    \end{equation}
    Therefore, by Lemma 11, we obtain 
    \begin{equation*}
        \begin{aligned}
            \frac{\sup_{\Gamma_{1,a}}u_k - \inf_{\Gamma_{1,a}}u_k}{2} \leq \eta, ~~\text{for}~1\leq k\leq k_1.
        \end{aligned}
    \end{equation*}
    Choose $K_0$ large enough such that $K_0\geq C_0 \cdot \eta^{-\frac{\sigma}{\alpha}}\cdot a^{-\frac{1}{\alpha}}$, then
    \begin{equation*}
        \begin{aligned}
            \mathcal{Q}_{R_k}(\omega_k) \subset B_{\frac{1}{C_0 R_{k-1}}}\times [\frac{-a}{C_0^{\alpha}R_{k-1}^{\alpha}\omega_{k-1}^{\sigma}},0],~~\text{for}~1\leq k\leq k_1.
        \end{aligned}
    \end{equation*}
    Hence,
    \begin{equation*}
        \begin{aligned}
            \text{osc}(u;\mathcal{Q}_{R_k}({\omega_k}))/2\leq \omega_k, ~~\text{for}~0\leq k\leq k_1.
        \end{aligned}
    \end{equation*}
    Furthermore, if $|x|\leq C_0$ and $-2\leq t\leq 0$, we have
    \begin{equation*}
        \begin{aligned}
            (\frac{x}{C_0 R_{k_1}},\frac{t}{C_0^{\alpha}R_{k_1}^{\alpha}{\omega_{k_1}^{\sigma}}}) \subset \mathcal{Q}_{R_{k_1}}(\omega_{k_1}).
        \end{aligned}
    \end{equation*}
    Then, we obtain
    \begin{equation*}
        \begin{aligned}
            |u_{k_1+1}(x,t)|\leq 1,~~\text{for}~(x,t)\in B_{C_0}\times[-2,0].
        \end{aligned}
    \end{equation*}
    Since $\text{osc}(u;\mathcal{Q}_{R_{k_1-1}}({\omega_{k_1-1}}))/2\leq \omega_{k_1-1}$, we have
    \begin{equation*}
        \begin{aligned}
            \frac{\mu_{k_1-1}-\mu_{k_1}}{\omega_{k_1-1}}\leq 1,
        \end{aligned}
    \end{equation*}
    Therefore, by (7.24), we obtain
    \begin{equation*}
        \begin{aligned}
            u_{k_1+1}(x,t)& = \frac{\omega_{k_1-1}u_{k_1}(\frac{x}{K_0}, \frac{t}{K_0^{\alpha}\eta^{\sigma}}) + \mu_{k_1-1}-\mu_{k_1}}{\omega_{k_1}}\\
            &\leq \frac{2+H_{\lambda,\epsilon}(\frac{x}{K_0})}{\eta}\\
            &\leq \frac{2+H_{\lambda,\epsilon}(\frac{x}{C_0})}{\eta}\leq 1+H_{\lambda,\epsilon}(x), ~~\text{for}~|x|\geq C_0.
        \end{aligned}
    \end{equation*}
    Since $\text{osc}(u;\mathcal{Q}_{R_{k_1}}({\omega_{k_1}}))/2\leq \omega_{k_1}$, we also obtain
    \begin{equation*}
        \begin{aligned}
            |u_{k_1+2}(x,t)|\leq 1+H_{\lambda,\epsilon}(x,t).
        \end{aligned}
    \end{equation*}

    \noindent (ii) If there exists some $k$ such that the average $\mu_k$ is not comparatively small with the corresponding oscillation. Assume 
    $$\frac{\inf\limits_{\mathcal{Q}_{R_{k}}(\omega_{k})}u}{\omega_k}\leq \frac{1}{2}~~ \text{for}~ 0\leq k\leq k_0,$$
    and
    \begin{equation*}
        \begin{aligned}
            \frac{\inf\limits_{\mathcal{Q}_{R_{k_0+1}}(\omega_{k_0+1})} u }{\omega_{k_0+1}}\geq \frac{1}{2}.
        \end{aligned}
    \end{equation*}
    Consider 
    \begin{equation*}
        \begin{aligned}
            u_{k_0+2}(x,t) = \frac{u(\frac{x}{C_0 R_{k_0+1}},\frac{t}{C_0^{\alpha}R_{k_0+1}^{\alpha}{\omega_{k_0+1}^{\sigma}}})-\mu_{k_0+1}}{\omega_{k_0+1}},
        \end{aligned}
    \end{equation*}
    Notice that $|u_{k_0+2}(x,t)|\leq 1+H_{\lambda,\epsilon}(x,t)$ and $\inf_{\Gamma_{1,1}} u_{k_0+2}\geq \frac{1}{2}$.
    Combining with the previous proof, we obtain the uniform H\"older regularity including the vanishing points. 

    \noindent (iii) If $\omega_0^{\sigma}\geq R_0^{-\epsilon}$ does not hold. In order to apply previous arguments, we try to see if it holds for $2R_0$ instead of $R_0$. If eventually we can find an integer $k$ such that 
    \begin{equation}
    \begin{aligned}
        \text{osc}(u;\mathcal{Q}_{2^kR_0}^{\epsilon})\geq (2^k R_0)^{-\frac{\epsilon}{\sigma}}.\nonumber
    \end{aligned}
    \end{equation}
    Then we can apply step (i) and step (ii) again. otherwise,
    \begin{equation}
    \begin{aligned}
        \text{osc}(u;\mathcal{Q}_{2^kR_0}^{\epsilon})\leq (2^k R_0)^{-\frac{\epsilon}{\sigma}}.\nonumber
    \end{aligned}
    \end{equation}
    for all $k\geq 0$. In this case, we also get an uniform H\"older regularity.

    \noindent(iv) Considering the sign-changing solutions with the initial data $u(x,0)$. Let $u(x,0): = u_+ (x,0) - u_-(x,0)$. Here, $u_+ (x,0):= \max(0, u(x,0))$ and $u_- (x,0)= -\min(0,u(x,0))$. By the $L^1$-contraction property, we obtain
    \begin{equation*}
        \begin{aligned}
            u(x,t) = u_1(x,t) - u_2(x,t).
        \end{aligned}
    \end{equation*}
    Here $u_1$ and $u_2$ are nonnegative solutions corresponding to the initial data $u_+(x,0)$ and $u_-(x,0)$. By the previous parts (i)(ii)(iii), $u_1$ and $u_2$ are uniformly Hölder continuous. Therefore, the solution $u$ is also uniformly Hölder continuous.
\end{proof}
\section*{Acknowledgements}
The author would like to thank her advisor Yannick Sire for bringing the investigations of fractional calculus and CR manifolds into her sight, suggesting studying them together, and also helpful discussions and valuable criticism on this paper.

\printbibliography

\Addresses

\end{document}